\newtheorem{definition}{Definition}[section]
\newtheorem{lemma}{Lemma}[section]
\newtheorem{example}{Example}[section]
\newtheorem{theorem}{Theorem}[section]
\newtheorem{corollary}{Corollary}[section]
\newtheorem{proposition}{Proposition}[section]
\newtheorem{assumption}{Assumption}[section]
\title[Bayesian Chance Constrained Optimization]{Bayesian Chance Constrained Optimization: Approximations and Statistical Consistency}
\newcommand{\nX}{\mathbf X_n}
\newcommand{\cX}{\mathcal{X}}
\newcommand{\e}{\epsilon}
\newcommand{\vxi}{\bm{\xi}}
\newcommand{\x}{\mathbf x }
\newcommand{\In}{\mathbb I}
\newcommand{\bbE}{\mathbb{E}}
\newcommand{\cQ}{\mathcal{Q}}
\newcommand{\scKL}{\textsc{KL}}
\newcommand{\qnv}{q^*(\vxi|\nX)}
\newcommand{\rev}[1]{\textcolor{blue}{#1}}
\newcommand{\remove}[1]{\textcolor{red}{\sout{#1}}}
\renewcommand{\remove}[1]{\unskip}
\renewcommand{\rev}[1]{\textcolor{black}{#1}}
\newcommand{\revt}[1]{\textcolor{blue}{#1}}
\renewcommand{\revt}[1]{\textcolor{black}{#1}}
\newcommand{\removet}[1]{\textcolor{red}{\sout{#1}}}
\renewcommand{\removet}[1]{\unskip}
\author{Prateek Jaiswal}
\address{Department of Statistics, Texas A\& M University, 
College Station, TX 77483}
\email{jaiswalp@tamu.edu}
\author{Harsha Honnappa}
\address{School of Industrial Engineering, Purdue University, 
West Lafayette, IN 47906}
\email{honnappa@purdue.edu}
\author{Vinayak A. Rao}
\address{Department of Statistics, Purdue University, 
West Lafayette, IN 47906}
\email{varao@purdue.edu}
\date{}
\begin{document}

\maketitle

\begin{abstract}
  This paper considers data-driven chance-constrained stochastic optimization problems in a Bayesian framework. Bayesian posteriors afford a principled mechanism to incorporate data and prior knowledge into stochastic optimization problems. However, the computation of Bayesian posteriors is typically an intractable problem, and has spawned a large literature on approximate Bayesian computation. Here, in the context of chance-constrained optimization, we focus on the question of statistical consistency (in an appropriate sense) of the optimal value, % and solution sets,
computed using either an exact or approximate posterior distribution. To this end, we rigorously prove a frequentist consistency result demonstrating the convergence of the optimal value %and solution sets 
to that of a fixed, parameterized constrained optimization problem. We augment this by also establishing a probabilistic rate of convergence of the optimal value. % and solution sets. 
We also prove the convex feasibility of the Bayesian stochastic optimization problem. Finally, we demonstrate the utility of our approach on an optimal staffing problem for an M/M/c queueing model.
\end{abstract}

\section{Introduction}
Consider a constrained optimization problem,
\vspace{-0.5em}
\begin{align*}\tag{TP}
\underset{x \in \mathcal X}{\min} &\quad f(\x,\vxi_0)\\
\text{s.t.}  &\quad g_i(\x,\vxi_0) \leq 0 , \ i \in \{1,2,3,\ldots, m\},
\end{align*}
where $\x \in \cX \subseteq \mathbb{R}^p$ is a decision vector in some convex set $\cX$ and $\vxi_0 \in \mathbb{R}^q$ parametrizes the problem.  The function $f: \cX \times  \mathbb R^q \mapsto \mathbb{R}$ encodes the cost/risk and the functions $g_i: \cX \times  \mathbb R^q \mapsto \mathbb{R}$ define the constraints. We assume that such a {\it nominal} optimization problem and its solution(s) exists, under suitable regularity conditions.

In practice, the parameter is often unknown beyond lying in some set $\Theta \subseteq \mathbb{R}^q$. 
%Robust optimization (RO) approaches, therefore, solve $$\underset{x \in \mathcal X}{\min} \{f(\x,\vxi)~\text{s.t.}~\quad g_i(\x,\vxi) \leq 0 , \ i \in \{1,2,3,\ldots, m\}~\forall \vxi \in \Theta\}.$$ RO solutions are worst-case solutions and, in general, are often overly conservative. 
It is natural, therefore, to assume the existence of a probability distribution $P(\cdot)$ with support $\Theta$ that quantifies the decision-maker's (DM) epistemic uncertainty about the parameter, leading to a {\it joint chance constrained} optimization problem
\vspace{-0.5em}
\begin{align*}\tag{JCCP}
\underset{x \in \mathcal X}{\min} &\quad E_P[f(\x,\vxi)]\\
\text{s.t.}  &\quad P\left(g_i(\x,\vxi) \leq 0 , \ i \in \{1,2,3,\ldots, m\} \right) \geq \beta.
\end{align*}
Note that a solution to (JCCP) is feasible for (TP) with probability at least $\beta$. Joint chance constrained problems 
%were first introduced by, and 
have been used extensively to model a range of constrained optimization problems with parametric uncertainty~\cite{prekopa2003,nemirovski2006convex}. 

%Accessibility of the probability measure $P$ is a crucial issue, and often it is only possible to sample from it. Using these samples to (approximately) solve chance constrained optimization has attracted significant and growing interest in the stochastic optimization literature~\cite{Geletu2013,Geletu2019,PenaOrdieres2019SolvingCP}. A natural method to follow is to formulate an {\it empirical} or {\it sample average} approximation~\cite{Pagnoncelli2009} of the chance constrained above, implying that the constraint must hold for a large fraction of the available samples. 

In this paper we are interested in data-driven settings where only a dataset $\nX$ of $n$ samples  -- so-called `covariates' -- is available, and whose joint distribution $P^n_{\vxi_0}(\cdot)$ depends on the `true' parameter $\vxi_0$. For instance, consider a staffing problem in a queueing system, where the goal is to compute the minimal number of servers required to ensure, with high probability, that the typical customer applying for service waits no more than a fixed amount of time to be served. The waiting time distribution for the typical customer depends on the arrival and service rates, which are unknown in a data-driven setting. Datasets here might include waiting times, inter-arrival and service times, whose distributions depend on the (unknown) rates. Problems of this type are prevalent across operations management~\cite{jiang2016data,Aktekin2016,Bandi2020}, finance~\cite{pagnoncelli2009computational}, and engineering~\cite{Liu15}.

%and scientific applications. \harsha{We should give citations here.}

In this data-driven setting, one might expect the epistemic uncertainty to diminish with an increasing number of samples, with each additional sample providing `new information' about the true parameter $\vxi_0$.
Bayesian methods provide a coherent way to quantify the devolution of the epistemic uncertainty %with express the dependence of the parameters on the observed data 
through a {\it posterior} density $\pi(\vxi|\nX)$ over the parameters $\vxi \in \Theta$\footnote{For simplicity, we will assume the existence of density functions throughout this paper.}. 
The latter is computed by combining a {\it prior} density, quantifying {\it a priori} information (and biases) about the parameters, and a likelihood function, quantifying the probability of observed data under any parameter $\vxi$. 
%measures the uncertainty in the parameter as a function of the available data $\nX := \{X_1,\cdots,X_n\}$ and . 
Specifically, from Bayes' formula, it is well known that 
\begin{align}
    \pi(\vxi|\nX) = \frac{p^n_{\vxi}(\nX) \pi(\vxi)}{\int p^n_{\vxi}(\nX) \pi(\vxi) d\vxi},
    \label{eq:post}
\end{align}
where $\pi(\vxi)$ is the prior density, $p^n_{\vxi}(\nX)$ is the likelihood of observing $\nX$, and the denominator is the so-called data evidence. 
Bayesian methods have the advantage of calibrating uncertainty about hidden variables given partial observations.
Further, in many applications, incorporating prior knowledge is preferable to straight empirics. For example, in the queueing system design problem the {prior distribution} maybe specified by a modeler based on expert input and require that the arrival rate be strictly less than the total system capacity (ensuring that the system is stochastically stable). Of course, in the absence of such knowledge, uninformative priors (such as Jeffrey's prior or uniform priors) can be used and the same calculus holds.

This paper focuses on the formulation of a {\it Bayesian joint chance constrained program} (BJCCP) model, wherein a posterior distribution is used as the measure of epistemic uncertainty in (JCCP) to obtain,
\begin{align*}\tag{BJCCP}
\underset{x \in \mathcal X}{\min}  &\quad E_{\pi(\vxi|\nX)}[f(\x, \vxi)]\\
\text{s.t.}  &\quad \Pi\left(g_i(\x,\vxi) \leq 0 , \ i \in \{1,2,3,\ldots, m\} | \nX\right) \geq \beta,
\end{align*}
where, for any set $A\subseteq \Theta$, $\Pi(A|\nX)=\int_{A}\pi(\vxi|\nX) d\vxi$.
The (BJCCP) formulation provides a principled way to combine data with parametric models of the uncertainty in~(JCCP). \removet{To the best of our knowledge, this formulation has not been considered in the literature on data-driven chance constrained optimization before and is, we believe, a useful addition to the growing toolbox of methodology for solving such problems; see Section~\ref{sec:lit}.} \rev{\removet{However, n}\revt{N}otice that the chance constrained in our setting is over the epistemic uncertainty unlike standard chance constrained problem formulation (see~\cite{nemirovski2006convex}). A typical problem in standard chance constrained literature is a version of (TP) with $f(x,\vxi_0)=\bbE_{P_{\vxi_0}}[\bar f(\x,u)]$ and $g_i(\x,\vxi_0)= \bbE_{P_{\vxi_0}}[ \In_{\{\bar g_i(\x, u) \leq 0\}} ]$, where $u$ is a random variable with true distribution $P_{\vxi_0}$ and $\bar f(\cdot,\cdot)$ and $\bar g_i(\cdot,\cdot)$ are some known random functions. The main objective in this setting is to develop an algorithm that computes an approximation to the optimal $\x$ by using samples of $u\sim P_{\vxi_0}$. Also, note that here $P_{\vxi_0}$ is a measure of aleatory uncertainty.} 

\revt{We would like to note that problem-specific instances of the Bayesian chance-constraint formulation have been studied before across a broad range of applications covering science, engineering and operations management. An
instance solving a staffing problem in a queuing system appeared in~\cite{Aktekin2016}. However, there the authors used a conjugate prior and approximated the chance constraint using samples from the posterior distribution using a Monte Carlo approach. Bayesian formulations quantifying epistemic uncertainty in data-driven constrained optimization problems also appeared in~\cite{lai2011mean} and \cite{chitsazan2015bayesian}. \cite{chitsazan2015bayesian} uses Bayesian chance constrained approach to control the epistemic uncertainty in  measuring flow velocity to design a hydraulic barrier.~\cite{lai2011mean} proposes a Bayesian approach to solve the Markowitz portfolio optimization problem, where the posterior quantifies the uncertainty in the unknown mean and variance of asset returns. (BJCCP), as defined above, generalizes these problem-specific formulations. }

\revt{A commonality among ~\cite{Aktekin2016,chitsazan2015bayesian,lai2011mean} is that they all use simplifying conjugate priors and likelihoods to obtain tractable posterior distributions.}
\removet{The posterior can be computed in closed-form under conjugacy assumptions.} However, these simplifying assumptions are restrictive and untenable for many application settings. %\revt{ therefore we believe that such restrictive modeling choices limit the wider use of the Bayesian methodology.}
The computation of the posterior under more general conditions is intractable, since the evidence cannot be easily calculated. Consequently, there is a substantial body of work on {\it approximate Bayesian computation} focused on the question of efficiently and accurately approximating the posterior distribution. Broadly speaking, there are two classes of methods in approximate Bayesian computation: sampling methods and optimization-based methods. Markov chain Monte Carlo (MCMC) is the canonical sampling method, where the objective is to design 
a stationary Markov chain whose invariant distribution is precisely the posterior distribution. Initializing the Markov chain in an arbitrary initial state, after a `burn-in' period the state of the designed Markov chain is (roughly speaking) a sample from a distribution that closely approximates the invariant/posterior distribution (where closeness is typically measured in terms of the total variation distance). MCMC, however, is known to suffer from high variance, complex diagnostics, and has poor scaling properties with the problem dimension~\cite{blei2017variational}. Furthermore, as we will show below, sample-based methods in chance constrained settings can produce non-convex feasible sets, even when the `true' problem is convex feasible. Coupled with the high variance of the methods, it may not be appropriate to use MCMC (or other sampling methods) to solve data-driven chance constrained problems like (BJCCP).

Variational Bayesian (VB) methods~\cite{blei2017variational}, in contrast, use optimization to compute an approximation to the posterior distribution from a class of `simpler' distribution functions (that does not, necessarily, contain the posterior) called the {\it variational family}, by minimizing divergence from the posterior distribution. Importantly, the posterior distribution being intractable, VB methods optimize a surrogate objective that lower bounds the divergence measure, and the optimizer of the surrogate is precisely the posterior distribution when the class of distributions includes it. The Kullback-Leibler divergence is a standard choice in VB methods~\cite{blei2017variational}, though there is increasing interest in $\alpha$-R\'enyi divergence as well~\cite{turner11} which yield approximations that have better support coverage. Broadly speaking, VB methods trade variance for bias; specifically, there is no sampling variance, but since the variational family does not contain the `true' posterior, there is often an unavoidable bias that is introduced. From the perspective of solving data-driven chance constrained stochastic optimization problems, this trade-off may be appropriate, since the approximation (under very general conditions, as we show) is often necessarily convex feasible. Consequently, we focus on Kullback-Leibler divergence-based VB methods and consider the question of asymptotic consistency (in the large sample limit) of the variational approximation (VBJCCP) to (BJCCP).
    
Besides proposing (BJCCP) and (VBJCCP) (see Section~\ref{sec:CCP} below), our primary contributions are to
\begin{enumerate}
    \item Demonstrate the convex feasibility of the joint chance constraint (VBJCCP) when the posterior distribution belongs to a `nice' class of distributions.
    \item Establish the `frequentist' statistical consistency of the %optimizers and 
    value of both (BJCCP) and (VBJCCP) in the limit of a large data-set and a single chance constraint. 
    \item Quantify the consistency results for 
    %the optimizers and 
    the value of both (BJCCP) and (VBJCCP), by establishing a probabilistic rate of convergence for a single chance constraint.
\end{enumerate}

Frequentist consistency of Bayesian methods demonstrate that the Bayesian posterior concentrates on the `true' parameter $\vxi_0$ of the data generating distribution in the large sample limit. Typically this is demonstrated by showing that the posterior converges weakly to a Dirac delta distribution concentrated at $\vxi_0$ in probability or almost surely under the data-generating distribution~\cite{Gh1997}. Here, we consider the frequentist consistency of the value %and optimizers
of (VBJCCP), and establish convergence in probability results demonstrating the consistency of VB approximations in Theorem~\ref{thm:1} and a probabilistic rate of convergence in Theorem~\ref{thm:finite}. Furthermore, as direct corollaries, we can easily recover consistency and rates of convergence for (BJCCP). %builds on the analysis of risk-sensitive decision making in Bayesian settings in~\cite{} to
We note that in preliminary works~\cite{jaiswal2020statistical,jaiswal2020variational} we claimed almost sure frequentist consistency of the optimal value of (VBJCCP) under the general conditions considered here. However, we subsequently realized that almost sure convergence is not possible under those conditions, and the consistency result in this paper \remove{shows} establishes convergence in probability. The rate of convergence results, of course, are entirely new.

\subsection{Relevant Literature}~\label{sec:lit}
\removet{To the best of our knowledge, Bayesian models of data-driven chance constrained optimization have not been considered before in the literature. }  \revt{The idea of accounting for epistemic uncertainty using a Bayesian and chance-constraint formulation is understudied. As noted above, problem-specific instances of the general Bayesian formulation (BJCCP) have appeared in~\cite{Aktekin2016,chitsazan2015bayesian,lai2011mean} , where authors use simplifying modeling assumptions to compute a closed form posterior distribution.} On the other hand, we note that there is precedence for Bayesian formulations of data-driven stochastic optimization problems -- for instance,~\cite{wu2018bayesian} develop the so-called Bayesian risk optimization (BRO) decision-making framework and establishes frequentist consistency of optimal values in the large sample limit; see recent follow-on work~\cite{cakmak2021solving,lin2021bayesian} as well. In~\cite{jaiswal2019b}, an approximate Bayesian formulation of the risk-sensitive decision-making problem is considered and, again, frequentist consistency results are established. None of these papers consider the chance constrained setting of this paper.

Nonetheless, there is an extensive literature on data-driven methods for solving chance constrained optimization problems, specifically scenario-based (SB)~\cite{calafiore2006distributionally,calafiore2006scenario,campi2009notes}, distributionally robust optimization (DRO)~\cite{calafiore2006distributionally,xie2019distributionally,jiang2016data,hota2019data,Gupta2019} and sample average approximation (SAA)~\cite{luedtke2008sample,pagnoncelli2009sample} approaches. This is by no means a comprehensive literature review, but highlights the range of approaches that have been explored. We direct the reader to the excellent recent review paper~\cite{geng2019data} for a comprehensive overview of the literature on data-driven chance constrained optimization. In particular, we observe that the ambiguity set in DRO quantifies the epistemic uncertainty when `centered' (defined, for instance, through the Wasserstein metric) around the empirical measure, which  converges to the data-generating measure in the large sample limit; see~\cite{cherukuri2020consistency} which establishes the consistency of chance-constrained DRO with Wasserstein ambiguity sets. This highlights an important difference with our current setting, where the posterior distribution (or its approximation) is used as a quantification of the epistemic uncertainty about the `true' parameter $\vxi_0$, and is shown to weakly converge to a Dirac delta distribution concentrated at $\vxi_0$, in the limit of a large sample of the covariates $\nX$. \rev{Another interesting paper~\cite{Gupta2019} considers a similar problem as (TP) with the unknown true model parameter and proposes a DRO framework, where ambiguity set of model parameters is constructed using the posterior distribution. The proposed DRO method in~\cite{Gupta2019} computes a robust optimal decision by maximizing the cost function over all possible models in the ambiguity set and then minimizing over all possible high-probability feasibility set, where high-probability feasibility set of decisions are computed using the ambiguity set and the constraint functions. }

%\textcolor{red}{
%\subsection{Relevant Literature}
%If the data-set is large, then it is reasonable to anticipate that the solution of the empirical problem closely approximates the true solution. Alternatively, there are {\it scenario} methods~\cite{}, where it is required that all the constraints hold for every available sample. If the available samples are drawn from the `true' uncertainty distribution and the sample set is large, the solution of the scenario methods are feasible for the `true' (JCCP) with high probability. \\ 
%{\it Distributionally robust} (DRO) methods~\cite{}, that ensure that the solution satisfies the chance constrained for all distributions in a suitably chosen ambiguity set centered (in an appropriate sense) on the empirical distribution, have been proposed to solve such data-driven problems. DRO however crucially depends on the choice of the ambiguity set, and the resulting optimization problem can often be non-convex feasible. Bayesian methods, on the other hand, offer an alternative to DRO. 
%}

The rest of the paper is laid out as follows. In the next section we introduce necessary notation and definitions that will be used throughout the paper. In Section~\ref{sec:CCP} we detail both (BJCCP) and (VBJCCP) providing a clean rationale for the modeling framework, and demonstrate the convex feasibility of (VBJCCP). \remove{Next, in Section~\ref{sec:consistent} we first establish the asymptotic consistency of the optimal value and the optimizers of (VBJCCP) under general conditions on the objective and constraint functions and then establish convergence rates for values of (VBJCCP) and (BJCCP).} \rev{Next, in Section~\ref{sec:consistent} we first establish the convergence rates for values of (VBJCCP) and (BJCCP) under general conditions on the objective and constraint functions and then asymptotic consistency of the optimal value and the optimizers of (VBJCCP)}. We end in Section~\ref{sec:App} with a simulation result demonstrating the efficacy of our approach in solving an optimal staffing problem.
%commentary on the utility of the results and directions for future research. %Finally, we push all proofs to the appendix.

\section{Notations and Definitions}

In this section, we introduce important notations and definitions used throughout the paper. 
We define an indicator function for any arbitrary set $A$ as $\In_{A}(t) : =  1 \text{ if $t\in A $ or } 0 \text{ if $t \notin A$ } $. Let $\|\cdot\|$ denote the Euclidean norm. Let $\delta_{\vxi}$ represent the Dirac delta distribution function, or singularity, concentrated at the parameter $\vxi$. Given an ensemble of random variables $\nX$ distributed as $P_0^n$ for any $n \geq 1$, following~\cite{GGV} we define the convergence of a sequence of random mappings $\{f_n:\nX\to \mathbb{R}\}$ to $f$ in $P_0^n$- probability as $\lim_{n\to\infty} P_0^n(|f_n-f|>\epsilon) = 0 $ for any $\epsilon>0$. We also use the notation $\lim_{n\to\infty} f_n \overset{P_0^n}{=} f$ or $f_n \overset{P_0^n}{\to} f$  as $n \to \infty$ to denote convergence in $P_0^n$- probability.   Next, we define degenerate distributions as
\sloppy
 \begin{definition}[Degenerate distributions]\label{def:degen}
 A sequence of distributions $\{q_n(\vxi)\}$ converges weakly to $\delta_{\vxi'}$ that is, $q_n(\vxi) \Rightarrow \delta_{\vxi'} $ for a $ \vxi' \in \Theta$,  if and only if  $\forall \eta > 0$
% 	\[ \lim_{n \to \infty } \int_{\{|\vxi -\vxi'| > \eta\}} {q}_n(\vxi)	d\vxi = 0. \]
	%\begin{align*}
	\(	\lim_{n \to \infty } \int_{\{\|\vxi -\vxi'\| > \eta\}} {q}_n(\vxi) 	d\vxi = 0.\)
%	\end{align*}
 \end{definition}

\begin{definition}[Rate of convergence]\label{def:roc}
    A sequence of distributions $\{q_n(\vxi) \}$ converges weakly to $\delta_{\vxi_1}$, $\forall \vxi_1 \in \Theta$ at the rate of $\gamma_n$ if 
    \begin{enumerate}
        \item[(1)] the  sequence of means $ \{\check \vxi_n := \int \vxi q_n(\vxi) d\vxi \}$ converges to $\vxi_1$ as $n\to \infty$, and 
        \item[(2)] the variance of $\{q_n(\vxi) \}$ satisfies
        \(E_{q_n(\vxi)}[\|\vxi - \check \vxi_n\|^2] = O\left (\removet{\frac{1}{\gamma_n^2}}\revt{\gamma_n^2}\right).\)
    \end{enumerate}
\end{definition}

We also define rescaled density functions as follows.
\vspace{.5em}
\begin{definition}[Rescaled density]
    For a random variable $\xi$ distributed as $d(\xi)$ with expectation $\tilde{\xi}$, for any sequence of matrices $\{t_n\}$, the density of the rescaled random variable $\mu := t_n (\xi - \tilde{\xi})$ is
    \(\check{d}_n(\mu) = |det(t_n^{-1})| d(t_n^{-1} \mu + \tilde{\xi}), \) where $det(\cdot)$ represents the determinant of the matrix.
\end{definition}\label{def:rescale}

% Next we define hypo-convergence and epi-convergence of a sequence of function $\{h_k(\x)\}$ to $h(\x)$.
% \begin{definition}[Hypo-convergence]
%     A sequence of functions $\{h_k(\x)\}$ hypo-converges to $h(\x)$; that is $\hlim_{k \to \infty} h_k(
%     \x) = h(\x)$, if 
%     \begin{enumerate}
%         \item for every $\x_k\to \x$, $\limsup_{ k \to  \infty} h_k(\x_k) \leq h(\x)     $, and
%         \item there exists a sequence $\x_k\to \x$, such that $\liminf_{ k \to  \infty} h_k(\x_k) \geq h(\x) $.
%     \end{enumerate}
% \end{definition}

% \begin{definition}[Epi-convergence]
%     A sequence of functions $\{h_k(\x)\}$ epi-converges to $h(\x)$; that is $\elim_{n \to \infty} h_k(
%     \x) = h(\x)$, if 
%     \begin{enumerate}
%         \item for every $\x_k\to \x$, $\liminf_{ k \to  \infty} h_k(\x_k) \geq h(\x)     $, and
%         \item there exists a sequence $\x_k\to \x$, such that $\limsup_{ k \to  \infty} h_k(\x_k) \leq h(\x) $.
%     \end{enumerate}
% \end{definition}

Next, recall the definition of a test function \cite{Sc1965}.
\begin{definition}[Test function]\label{def:test}
    Let $\nX$ be a sequence of random variables on measurable space $(\mathbb{R}^{q\times n},\mathcal{S}^n)$. Then any $\mathcal S^n$-measurable sequence of functions $\{\phi_n\},~ \phi_n: \nX \mapsto [0,1]~\forall n \in \mathbb N$, is a \textit{test of a hypothesis} that a probability measure on $\mathcal S^n$ belongs  to a given set against the hypothesis that  it belongs to an alternative set. The test $\phi_n$ is \textit{consistent} for hypothesis $P_0^n$ against the alternative $P^n \in \{P_{\vxi}^n : \vxi \in \Theta\backslash\{\vxi_0\} \}$ if $\mathbb{E}_{P^n}[\phi_n] \to \In_{ \{\vxi\in \Theta\backslash\{\vxi_0\} \}  }(\vxi), \forall \vxi \in \Theta$ as $n \to \infty$, where $\In_{\{\cdot\}}$ is an indicator function. 
    %\in \{P_{\vxi} : \vxi = \vxi_0 \}$
\end{definition}

A classic example of a test function is $\phi^{\text{KS}}_n = \In_{\{\text{KS}_n > K_{\nu}\}}(\theta)$ that is constructed using the Kolmogorov-Smirnov statistic $\text{KS}_n := \sup_t |\mathbb{F}_n(t) - \mathbb{F}_{\theta}(t)|$, where $\mathbb{F}_n(t)$ and $\mathbb{F}_{\theta}(t)$ are the empirical and true distribution respectively,  and $K_{\nu}$ is the confidence level. If the null hypothesis is true, the Glivenko-Cantelli theorem~\cite[Theorem 19.1]{vdV00} shows that the KS statistic converges to zero as the number of samples increases to infinity.

%\prateek{Add an example}

\section{Variational Bayesian Chance Constrained Optimization}~\label{sec:CCP}
%Let $X$ be an $\mathbb{R}^d$ valued random variable with distribution $P_{\vxi}$, where $\vxi \in \mathbb{R}^q$, and $p_{\vxi}$ is the density corresponding to $P_{\vxi}$ (assumed to exist).
%Bayesian statistics delineates natural principles to model uncertainty in parameter estimation, using observed data combined with prior knowledge. 
%This uncertainty in the parameter estimation is encoded in the form of a posterior distribution which is proportional to the prior information and the likelihood of the  observed  data.
%\harsha{
Consider a parameterized joint probability distribution $P^n_{\vxi}$ over $\mathbb R^{d \times n}$, where $\vxi \in \mathbb R^q$ and let $p^n_{\vxi}(\cdot)$ represent the corresponding density. We observe a random sample $\nX := \{X_1,X_2,\ldots,X_n\}$ drawn from $P^n_{\vxi_0} \equiv P^n_0$. Note that $\nX$ need {\it not} be an independent and identically distributed (IID) sequence. Recall from~\eqref{eq:post} that the Bayesian approach computes a posterior over the unknown `true' parameter $\vxi_0$, giving rise to the Bayesian joint chance-constrained optimization problem (BJCCP). As noted in the introduction, there are the two significant challenges in solving (BJCCP):

\begin{enumerate}[leftmargin= *]
    \item[(i)] \textit{Computing the posterior distribution.} While in some cases conjugate priors can be used, this  is not appropriate in most problems. In general, posterior computation is intractable, and it is  %resulting in an intractable computation. 
    %In practice, computing posterio distribution is challenging and mostly intractable. In such cases the DM rely on the  availability of the conjugate priors for Bayesian inference. 
    %The posterior intractability is 
    the common motivation for using approximate Bayesian inference methods~\cite{blei2017variational} .
    
    \item[(ii)] \textit{Convexity of the feasible set.} %\harsha{feasible or feasibility?}
    %Ideally, one should expect (BJCCP) to be a convex program to take advantage of the well established  convex solvers. 
    %But
    Observe that, even if the posterior distribution is computable, to qualify (BJCCP) as a convex program,  the feasible set,
    \begin{align}
    \{ \x \in \cX : \Pi \left(g_i(\x,\vxi) \leq 0 , \ i \in \{1,2,3,\ldots, m\}|\nX \right) \geq \beta \} 
    \label{eq:fs}
    \end{align} 
    must be convex. 
    %In  which is more difficult than showing that the objective function in JCCP is convex. 
    However, it is possible that this set is not convex, even when the underlying constraint  functions $g_i(\x,\vxi), i \in \{1,2,\ldots m\}$  are (in $\x$) and, thus, finding a global optimum becomes challenging~\cite{Prkopa1995}. This raises the canonical question of when (VBJCCP) and (BJCCP) are convex feasible. 
\end{enumerate}
   
    Note that, if the constraint function has some structural regularity and the posterior distribution belongs to an appropriate class of distributions, then it can be shown that the feasible set in~\eqref{eq:fs} is convex. For instance, 
    \begin{proposition}~\cite[Theorem 2.5]{prekopa2003}
        If the constraint functions $g_i(\x,\mathbf y), i \in \{1,2,\ldots m\}$ for $\x \in \mathcal{X}$ and $ \mathbf y \in \mathbb{R}^q$ are quasi-convex in $(\x,\mathbf y)$ and $\vxi$ is a random variable with log-concave probability distribution, then the feasible set in (BJCCP) is convex. %(\cite[Chapter 4]{ShDeRu2009} and \cite{prekopa2003}).
    \end{proposition}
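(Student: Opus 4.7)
The plan is to reduce the claim to an application of Prékopa's classical theorem on log-concave measures, after converting the chance constraint into a statement about the marginal integral of a log-concave density-times-indicator over one of its arguments. The key structural observation is that, under quasi-convexity of the $g_i$ in the joint variable $(\x,\vxi)$, the feasible region in the joint space is convex; then log-concavity of the posterior transfers convexity to the $\x$-marginal through Prékopa.

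First I would define the joint set $C := \{(\x,\vxi) \in \cX \times \mathbb{R}^q : g_i(\x,\vxi) \le 0, \ i = 1,\ldots,m\}$. Since each $g_i$ is quasi-convex in $(\x,\vxi)$, every sublevel set $\{g_i \le 0\}$ is convex, and $C$ is their intersection (with the convex set $\cX \times \mathbb{R}^q$), hence convex. Next, rewrite the chance-constraint function as
\begin{equation*}
h(\x) := \Pi\bigl(g_i(\x,\vxi) \le 0, \ i=1,\ldots,m \mid \nX\bigr) = \int_{\mathbb{R}^q} \In_{C}(\x,\vxi)\,\pi(\vxi \mid \nX)\, d\vxi.
\end{equation*}
The integrand $F(\x,\vxi) := \In_{C}(\x,\vxi)\,\pi(\vxi \mid \nX)$, viewed jointly in $(\x,\vxi)$, is the product of two log-concave functions: the indicator of a convex set is log-concave on $\mathbb{R}^p \times \mathbb{R}^q$, and $\pi(\vxi \mid \nX)$ is log-concave in $\vxi$ by hypothesis (hence log-concave jointly in $(\x,\vxi)$ as it is constant in $\x$). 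Products of log-concave functions are log-concave.

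Second, I would invoke Prékopa's theorem: the marginal of a jointly log-concave, nonnegative, integrable function obtained by integrating out a subset of coordinates is log-concave in the remaining coordinates. Applied to $F$ with $\vxi$ integrated out, this yields that $h : \cX \to [0,1]$ is log-concave in $\x$. Consequently, for the chosen confidence level $\beta \in (0,1)$, the set
\begin{equation*}
\{\x \in \cX : h(\x) \ge \beta\} = \{\x \in \cX : \log h(\x) \ge \log \beta\}
\end{equation*}
is the intersection of the convex set $\cX$ with a superlevel set of the concave function $\log h$, and therefore convex. This is precisely the feasible set in (BJCCP), completing the argument.

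The main obstacle I anticipate is not conceptual but bookkeeping: one must verify that the hypotheses for Prékopa (measurability, integrability, and non-degeneracy so that $\log h$ is well-defined on the relevant domain) are met in this Bayesian setting, and handle the boundary case where $h(\x) = 0$ (the $\log$ formulation is then formally $-\infty$, which is still consistent with convexity of the superlevel set since $\log \beta > -\infty$ for $\beta > 0$). The quasi-convexity-in-$(\x,\vxi)$ assumption is doing real work here: quasi-convexity only in $\x$ would not suffice to make $C$ convex as a subset of the joint space, which is what Prékopa requires.
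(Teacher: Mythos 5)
Your argument is correct and follows the same route as the paper, which simply cites Pr\'ekopa's Theorem 2.5 without reproducing its proof; what you have written is the standard derivation of that cited result (convexity of the joint sublevel set, log-concavity of $\In_{C}(\x,\vxi)\,\pi(\vxi\mid\nX)$, Pr\'ekopa's marginalization theorem, and superlevel sets of a log-concave function). Your closing remarks on the boundary case $h(\x)=0$ and on why quasi-convexity must hold jointly in $(\x,\vxi)$ are accurate and worth keeping.
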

    \begin{proof}
        The proof is a direct consequence of the result in Theorem 2.5 in~\cite{prekopa2003}.
    \end{proof}
    Furthermore,~\cite{Lagoa2005} showed that if the constraint function $g_i(\x,\vxi)$ is of the form $\{\mathbf  a^T \x\leq \mathbf b\}$, where $\vxi=(\mathbf a^T,\mathbf b)^T $ and has a symmetric log-concave density then with $\beta>\frac{1}{2}$ the feasible set in (BJCCP) is convex.
To address the posterior intractability, Monte Carlo (MC) methods offer one way to do approximate Bayesian inference with asymptotic guarantees. 
However, their asymptotic guarantees are offset by issues like poor mixing, large variance and complex diagnostics in practical settings with finite computational budgets~\cite{Kass1998,Andrieu2003}. Apart from these common issues, there is another important reason due to which any sampling-based method cannot be used  directly to solve (BJCCP): using the empirical approximation to the posterior distribution (constructed using  the samples generated from MCMC algorithm) to approximate the chance-constraint feasible set in (BJCCP), results in a non-convex feasible set~\cite{PenaOrdieres2019SolvingCP}. To illustrate this, consider the following simple example of a chance-constraint feasible set motivated by~\cite{PenaOrdieres2019SolvingCP}. 
\begin{example}~\label{ex:Gaussian}
Figure~\ref{fig:MCVB}(a) plots the chance-constraint feasible set
%\begin{align}  \left\{\x \in R^2: \mathcal{N} \left( \vxi^T\x -1 \leq 0 \bigg | \bm {\mu}= \begin{bmatrix} 
%0 \\  0 
%\end{bmatrix}  ,\mathbf \Sigma_1= \begin{bmatrix} 
%1 & - 0.025 \\ -0.025 & 1 
%\end{bmatrix}  \right) > \beta \right \} ,
%\label{eq:eq1}
%\end{align}
\begin{align}  \left\{\x \in R^2: \mathcal{N} \left( \vxi^T\x -1 \leq 0 | \bm {\mu}= [0,0]^T  ,\mathbf \Sigma_A= [1,-0.1;-0.1,1]  \right) > \beta \right \} ,
\label{eq:eq1}
\end{align}
and its empirical approximator using 8000 MCMC samples (Metropolis-Hastings with a `burn-in' of 3000 samples) generated from the underlying correlated multivariate Gaussian distribution. We fix $\beta=0.9$. We observe that the  resulting MC approximate feasible set is non-convex.  
%\end{example}
%where $\mu = (0,0)$ and $\Sigma=$ 
%\(\begin{bmatrix} 
%1 & - 0.05 \\ -0.05 & 1 
%\end{bmatrix}\)
%\vspace{-0.5em}
%\begin{}[h]
%    \centering
%    \includegraphics[scale=0.3]{graphics/MC1.png}
%    \vspace{-1em}
%    \small \caption{Feasible Region :  True Distribution vs Monte-Carlo Approximation (5000 samples)~\cite{PenaOrdieres2019SolvingCP}}
%    \label{fig:MC}
%\end{figure}

%Therefore, due to the posterior intractability and the non-convexity of the feasible region when using sampling approaches,

% calculated efficiently.
%VB approaches allow practitioners 
%to bring tools from optimization to the challenging problem of Bayesian 
%inference, with expectation-maximization~\cite{neal1998view} and  
%gradient-based~\cite{kingma2013auto} methods being used to minimize 
%equation~\equationref{eq:vb_opt}. 
%Also, note that the objective  $\scKL(q(\vxi\| \pi(\vxi|\nX)  ))$ in~\equationref{eq:vb_opt} is idealized and the approximate posterior computation in~\equationref{eq:vb_opt} only requires the knowledge of posterior distribution $\pi(\vxi|\nX)$ upto the proportionality constant, since the normalizing term  in the definition of $\pi(\vxi|\nX)$ does not depend on the parameter $\vxi$. 
% \vspace{1em}
% \noindent \textbf{Example 1 (continued) }\\
\textit{Next,  we show that using the popular `mean-field variational family'~\cite{blei2017variational} to approximate the correlated multivariate Gaussian distribution in the same example in~\eqref{eq:eq1}, we obtain a smooth and convex approximation to the (BJCCP) feasible set. First, we compute mean-field approximation $q_A(\vxi)$ and $q_B(\vxi)$ of $\mathcal{N} \left( \vxi  | \bm {\mu} =[0,0]^T  ,\mathbf \Sigma  \right)$ for four different covariance matrices $\mathbf \Sigma$, with fixed variance $\sigma_{11}=\sigma_{22}=1$ but varying covariance $\sigma_{12}=\{-0.1,-0.025,0.025,0.1\}$.
%=\begin{bmatrix} 1 & - 0.025 \\ -0.025 & 1 \end{bmatrix}$ 
%and $\mathbf \Sigma_B$.
%=\begin{bmatrix} 1 &  0.025 \\ 0.025 & 1 \end{bmatrix}$. 
Then, we plot the respective approximate VB chance-constraint feasibility region  
%\(  \left\{\x \in R^2: q_A \left( \vxi^T \x -1 \leq 0  |\mathbf 0, \sigma^A_1, \sigma^A_2  \right) > \beta \right \} \) and \(  \left\{\x \in R^2: q_B \left( \vxi^T \x -1 \leq 0  |\mathbf 0, \sigma^B_1,\sigma^B_2 \right) > \beta \right \} \) 
in Figure~\ref{fig:MCVB}. We observe that VB approximation provides a smooth convex approximation to the true  feasibility  set, but it could be outside the true feasibility region if the $ \xi_1$ and $\xi_2$ are positively correlated.}
%Consider again the same feasible set
%where $\mu = (0,0)$ and $\Sigma=$ 
%\(\begin{bmatrix} 
%1 & - 0.05 \\ -0.05 & 1 
%\end{bmatrix}\)
%\vspace{-1.4em}

\begin{figure}
     \centering
     \begin{subfigure}[b]{0.24\textwidth}
         \centering
         \includegraphics[width=\textwidth,height=\linewidth]{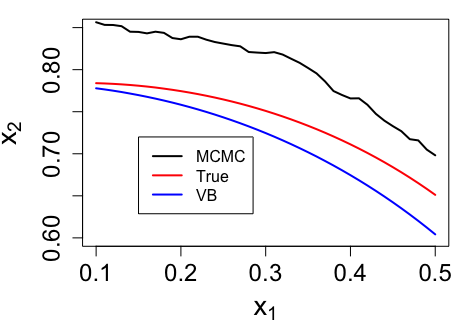}
         \caption{$\sigma^A_{12}=-0.1$}
         \label{fig:c1}
     \end{subfigure}
     %\hfill
     \begin{subfigure}[b]{0.24\textwidth}
         \centering
         \includegraphics[width=\textwidth,height=\linewidth]{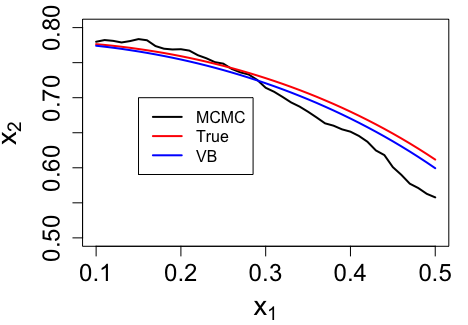}
         \caption{$\sigma^B_{12}=-0.025$}
         \label{fig:s1}
     \end{subfigure}
     %\hfill
     \begin{subfigure}[b]{0.24\textwidth}
         \centering
         \includegraphics[width=\textwidth,height=\linewidth]{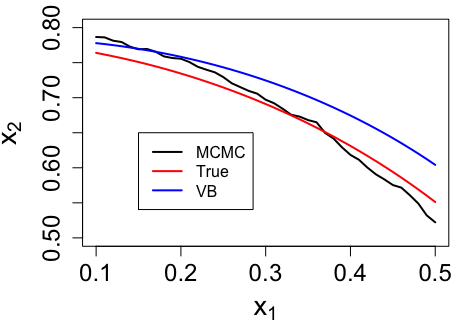}
         \caption{$\sigma^B_{12}=0.025$}
         \label{fig:c2}
     \end{subfigure}
     \begin{subfigure}[b]{0.24\textwidth}
         \centering
         \includegraphics[width=\textwidth,height=\linewidth]{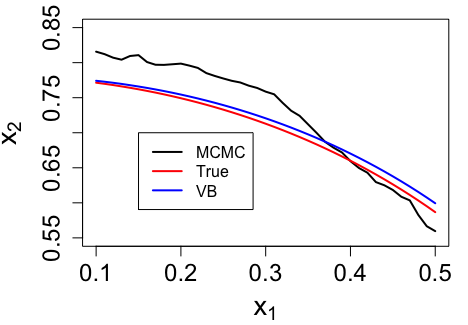}
         \caption{$\sigma^B_{12}=0.1$}
         \label{fig:s2}
     \end{subfigure}
        \caption{Feasible Region :  True Distribution vs Monte Carlo Approximation (5000 samples) vs. VB (mean field approximation).}
        \label{fig:MCVB}
\end{figure}
\end{example}

\subsection{Variational Bayes}
Variational Bayes (VB) methods are an alternative method for computing an approximate posterior. Standard VB minimizes the Kullback-Leibler (KL) divergence measure to compute $q^*$, the element in a given class of distributions 
$\cQ$ that is
`closest' to the posterior $\pi(\vxi|\nX)$:
\begin{eqnarray}
\qnv \in  \text{argmin}_{{q} \in \mathcal{Q}}~ 
\text{KL}({q}(\vxi)\|\pi(\vxi|\nX)) := \int q(\vxi) \log \frac{q(\vxi)}{\pi(\vxi|\nX)}d\vxi. \label{eq:vb_opt}
\end{eqnarray}
Using this, we approximate (BJCCP) with, 
\begin{align*}\tag{VBJCCP}
\underset{x \in \mathcal X}{\min} &\quad \bbE_{\qnv}[f(\x,\vxi)]\\
\text{s.t.}  &\quad Q^* \left(g_i(\x,\vxi) \leq 0 , \ i \in \{1,2,3,\ldots, m\}|\nX \right) \geq \beta,
\end{align*}
where $\beta$ is the confidence level and for any set $A\subseteq \Theta$, $Q^*(A|\nX)=\int_{A}\qnv d\vxi$. Observe that the optimization problem~\eqref{eq:vb_opt} is infeasible, since the posterior is unknown. However, unpacking the $\scKL$ divergence, we see that 
\begin{align}
\text{KL}({q}(\vxi)\|\pi(\vxi|\nX)) &= \int q(\vxi) \log \frac{q(\vxi)}{\pi(\vxi,\nX)}d\vxi + \log \int p^n_{\vxi}(\nX) \pi(\vxi) d\vxi.
\end{align}
Since, $\log \int p^n_{\vxi}(\nX) \pi(\vxi) d\vxi$ is a constant (with respect to $q$), minimizing the $\scKL$ divergence is equivalent to maximizing $\int q(\vxi) \log \frac{\pi(\vxi,\nX)}{q(\vxi)}d\vxi$. Since,~\scKL~divergence is non-negative, it follows that the log-evidence satisfies
\begin{align}
    \nonumber
    \log \int p^n_{\vxi}(\nX) \pi(\vxi) d\vxi &\geq \int q(\vxi) \log \frac{\pi(\vxi,\nX)}{q(\vxi)} d\vxi\\
    \tag{ELBO}
    &= -\text{KL}(q(\vxi)\| \pi(\vxi)) + \int \log  p^n_{\vxi}(\nX)~q(\vxi) d\vxi,
\end{align}
and the bound is tight if and only if the optimizer $q^*(\cdot)$ is the `true' posterior distribution. Thus, an approximate posterior can be computed by maximizing the so-called {\it evidence lower bound} (ELBO) in the final expression above:
\begin{align}
    q^*(\vxi | \nX) \in \underset{q \in \mathcal Q}{\arg\max} \int \log  p^n_{\vxi}(\nX)~ q(\vxi) d\vxi - \text{KL}(q(\vxi)\| \pi(\vxi)).
    \label{eq:ELBO}
\end{align}
 
Choosing the approximation to the posterior distribution from a class of `simple' distributions would facilitate in addressing the two critical problems associated with (BJCCP). Besides the  tractability of the posterior distribution, for instance, using the results in~\cite{prekopa2003} and~\cite{Lagoa2005} the choice of  a log-concave family of distributions as the approximating family could retain the convexity of the feasible set, if the constraint functions have certain structural regularity \rev{(see Proposition ~\ref{prop:2})}. \rev{However, we would also like to note that choosing a variational family is an important question in using any VB method. Often, in machine learning applications, the variational family is chosen based on computational convenience~\cite{blei2017variational}. Providing a general recipe to choose a variational family is challenging and an area of active research. }

As Example~\ref{ex:Gaussian} shows, the VB approximation of the feasibility set could include infeasible points, in general. This raises the question of whether the VB approximation can be consistent (in some appropriate sense) when the sample size $n$ is large. In other words, is there a notion of `frequentist' consistency of the feasibility set, the optimal values, and solutions? We address this question in the remainder of the paper.

\section{Asymptotic Analysis}~\label{sec:consistent}% of (VBJCCP)}
In this section, we first identify regularity conditions on the prior distribution, the likelihood model, the variational family, and the risk and constraint functions to establish the rate at which the feasible region of (VBJCCP) coincides with the true feasible region. Then, under similar regularity conditions, we derive the convergence rate of the optimal value of (VBJCCP) to that of (TP), in the setting with a single constraint function (i.e., $m=1$). We derive the convergence rate result under very mild conditions on the  prior distribution and the likelihood models that are, nonetheless, hard to verify in practice for many problems of interest. Therefore, under more restrictive, but easily verifiable, regularity conditions we show that the 
%establish theoretical guarantees on the approximate 
the 
%optimal solution set $\mathcal S^*_{VB}(\nX)$ and 
optimal \remove{values} \rev{value} $V^*_{VB}$ of  (VBJCCP)   %obtained using the VB approximation and show that it
converges to the optimal value
%solution set $\mathcal S^*$ and 
$V^*$ of (TP) at $\vxi=\vxi_0$ (respectively), in $P_0^n-$probability as the number of samples converges to infinity, again in the setting with a single constraint function.

Note that it follows from the definition of the VB posterior $\qnv$ in~\eqref{eq:vb_opt} that when the variational family $\mathcal{Q}$ consists of all possible distributions then $\qnv$ coincides with the true posterior distribution. Consequently, all of our theoretical results for (VBJCCP) trivially extend to (BJCCP).

\subsection{Convergence rate and feasibility guarantee}
%\subsection{Rate of convergence}
We state the assumptions under which we establish the rate of convergence and feasibility guarantee results. Let $L_n : \Theta \times  \Theta \mapsto [0,\infty)$ be an arbitrary loss function that measures the distance between parameters and also  depends on $n$. %For brevity, let $P_0^n \equiv P^n_{\vxi_0}$. %At the outset, we assume that $L_n(\vxi_1,\vxi_2)$ is always positive.
\begin{assumption}\label{assump:Asf1}
    Let $\{\epsilon_n\} \subset (0,\infty)$ be a sequence such that \remove{$\e_n \to \infty$ and } $n\e_n^2\geq 1$ \remove{ as $n\to\infty$} \rev{for all $n\geq 1$}. Fix $n\geq 1$. Then, for $L_n(\vxi,\vxi_0)\geq 0$ and any $\epsilon > \epsilon_n$, $\exists $ a test function $\phi_{n,\e} : \nX \mapsto [0,1]$ and sieve set $\Theta_n(\e) \subseteq \Theta$ such that 
    \begin{enumerate}
        \item[(i)] $\bbE_{P^{n}_0}[ \phi_{n,\e} ]  \leq C_0\exp(-C n \e^2 ), \text{ and }$
        \vspace{0.1em}
        \item[(ii)] $\underset{\{ \vxi \in \Theta_n(\e)  : L_n(\vxi,\vxi_0) \geq C_1 n \epsilon^2  \}}{\sup} \bbE_{P^{n}_{\vxi}}[ 1- \phi_{n,\e} ] \leq  \exp(-C n \e^2 )$.
    \end{enumerate}
    % \begin{align*}
    %     &(i)~ \bbE_{P^{n}_0}[ \phi_{n,\e} ]  \leq C_0\exp(-C n \e^2 ), \text{ and } \\
    %     &(ii)~ \underset{\{ \vxi \in \Theta_n(\e)  : L_n(\vxi,\vxi_0) \geq C_1 n \epsilon^2  \}}{\sup} \bbE_{P^{n}_{\vxi}}[ 1- \phi_{n,\e} ] \leq  \exp(-C n \e^2 ).
    % \end{align*} 
\end{assumption}
\remove{Assumption \ref{assump:Asf1}$(i)$ quantifies the rate at which a Type-1 error diminishes with the sample size, while the condition in Assumption \ref{assump:Asf1}$(ii)$ quantifies that of a Type-2 error.} 
\rev{Since $\phi_{n,\e}$ is a test function (see Definition~\ref{def:test}) the LHS in Assumption~\ref{assump:Asf1}(i), the expectation of it under the null hypothesis ($\vxi=\vxi_0$), quantifies the probability of rejecting the null hypothesis when it is true, therefore, it is Type-I error. Similarly, the LHS in Assumption~\ref{assump:Asf1}(ii) is the expectation of the test function under the alternate hypothesis in the alternate set $\{ \vxi \in \Theta_n(\e)  : L_n(\vxi,\vxi_0) \geq C_1 n \epsilon^2  \}$, therefore it is Type-II error. Note that Assumption~\ref{assump:Asf1} is on the data generating model. Intuitively, it ensures the existence of a test $\phi_{n,\e}$ which is sufficiently powerful so that Type I/II errors decay at a certain rate. We will observe in Lemma~\ref{thrm:thm1} that the same rate governs the rate of convergence of the posterior distribution.} %Our rate of convergence results depends on the existence of consistent test functions.  
Assumption~\ref{assump:Asf2} below ensures the prior distribution places `sufficient' mass on the sieve set $\Theta_n(\e)$ defined in Assumption~\ref{assump:Asf1}.

\begin{assumption}\label{assump:Asf2}
    Let $\{\epsilon_n\} \subset (0,\infty)$ be a sequence such that \remove{$\e_n \to \infty$ and } $n\e_n^2\geq 1$ \remove{ as $n\to\infty$} \rev{for all $n\geq 1$}. Fix $n\geq 1$. Then, the prior distribution satisfies 
   % \begin{align*}
      $  \bbE_{\Pi}[ \In_{ \{ \Theta_n^c(\e) \} } ] \leq \exp(-C n \e^2). $
    %\end{align*} 
    %where $\Pi(\vxi)$ is the prior distribution on support $\vxi$.
\end{assumption}

Notice that Assumption~\ref{assump:Asf2} is trivially satisfied if  $\Theta_n(\e) = \Theta$. The next assumption ensures that the prior distribution places sufficient mass around a neighborhood $A_n$, defined using the R\'enyi divergence, of the true parameter $\vxi_0$.
\sloppy
\begin{assumption}\label{assump:Asf3}
    Fix $n\geq 1$ and a constant $\lambda > 0$. Let   
    \(
    A_n := \left\{ \vxi \in \Theta :D_{1+\lambda} \left( P_0^n \| P_{\vxi}^n \right) \leq C_3 n \e_n^2 \right\}, 
    \)
    where $D_{1+\lambda} \left( P_0^n \| P_{\vxi}^n \right) := \frac{1}{\lambda} \log \int \left(\frac{dP_0^n}{dP_{\vxi}^n}\right)^\lambda dP_0^n $ is the R\'enyi divergence between $P_0^n$ and  $P_{\vxi}^n$, assuming $P_0^n$ is absolutely continuous with respect to $P_{\vxi}^n$. The prior distribution satisfies 
    \vspace{0em}
    %\begin{align*}
       $ \bbE_\Pi[\In_{\{A_n\}}]  \geq \exp(-n C_2 \e_n^2). $
    %\end{align*} 
\end{assumption}
Observe that the set $A_n$ defines a neighborhood of the distribution corresponding to $\vxi_0$.
%The assumption guarantees that the prior distribution covers this neighborhood with positive mass. 
If Assumption~\ref{assump:Asf3} is violated then the posterior too will place no mass in this neighborhood of $\vxi_0$, implying asymptotic inconsistency. 
Assumptions~\ref{assump:Asf1},~\ref{assump:Asf2}, and~\ref{assump:Asf3} are adopted from~\cite{GGV} and has also been used in~\cite{ZhGa2019} to prove convergence rates of variational posteriors. \rev{We also impose some standard regularity conditions on the cost and constraint functions.
\begin{assumption}\label{ass:Cath0}
    We assume that $f(\cdot,\vxi)$ and $g_i(\cdot,\vxi)$ are continuous for almost every $\vxi \in \Theta$.
    \end{assumption}
    }
%Interested readers may refer to~\cite{GGV} and~\cite{ZhGa2019} to read more about the above assumptions.

%\prateek{Say more on constraint optimization problem.}

Our main result demonstrating the rate of convergence follows a series of lemmas. All the proofs (except main results) can be found in~Section~\ref{sec:Proof4}.
We first recall the following result from \cite{ZhGa2019},
\begin{lemma}[Theorem 2.1~\cite{ZhGa2019}] \label{thrm:thm1}
    For any $L_n(\vxi,\vxi_0)\geq0$ and $\delta>0$, under Assumptions~\ref{assump:Asf1},~\ref{assump:Asf2},  and,~\ref{assump:Asf3}, and for $C > C_2 + C_3 +  2$ and $ \eta_n^2 := \frac{1}{n}\inf_{q \in \mathcal Q} \bbE_{P^{n}_0} \left[  \int_{\vxi}   	q(\vxi)  \log \frac{q(\vxi)} { \pi(\vxi | \nX)   } d\vxi  
    \right], $
    the VB approximator of the true posterior, $\qnv$, satisfies,
    \begin{align}
        P_0^n\left[ \int_{\vxi}  L_n(\vxi,\vxi_0) \qnv d\vxi > n \delta \right] \leq \frac{M }{\delta} ( \e_n^2 + \eta_n^2 )
        \label{eq:eq_lcb}
    \end{align}
    %    \begin{align}
    %        \bbE_{P^{n}_0} \left[ \int_{\vxi} L_n(\vxi,\vxi_0) \qnv d\vxi \right] \leq M n(\e_n^2 + \eta_n^2 ), 
    %    \end{align}
    for some constant M that depends on the $C,C_1, C_2$, and $C_3$.
    
    %\frac{1}{n} \inf_{q \in \mathcal Q} \bbE_{P^{n}_0} \bigg[  \scKL \left(q(\vxi)\|\pi(\vxi | \nX)\right) + \scKL\left(q(\vxi)\|\frac {G(a,\vxi)\pi(\vxi)}{\int_{\vxi} G(a,\vxi )   \pi(\vxi) d\vxi }\right ) - \scKL \left( q(\vxi) \| \pi(\vxi ) \right) \bigg].  	$$	
    
\end{lemma}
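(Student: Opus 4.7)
The plan is to convert the probability statement into an expectation bound via Markov's inequality, and then to bound the resulting expectation by applying the Donsker--Varadhan variational representation of the KL divergence to separate two contributions: one controlled by the variational gap $\eta_n^2$ and one controlled by a standard posterior-concentration argument built from Assumptions~\ref{assump:Asf1}--\ref{assump:Asf3}.

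First I would apply Markov's inequality to reduce the claim to showing that
\[
\bbE_{P_0^n}\!\left[\int L_n(\vxi,\vxi_0)\, \qnv\, d\vxi\right] \leq M(n\e_n^2 + n\eta_n^2).
\]
Next, for any $\alpha>0$, the Donsker--Varadhan inequality applied with reference measure $\pi(\vxi|\nX)$ gives
\[
\alpha \!\int L_n(\vxi,\vxi_0)\, \qnv\, d\vxi \;\leq\; \scKL\bigl(\qnv\,\|\,\pi(\vxi|\nX)\bigr) + \log\!\int e^{\alpha L_n(\vxi,\vxi_0)}\, \pi(\vxi|\nX)\, d\vxi.
\]
Taking expectation under $P_0^n$, the first term on the right is exactly $n\eta_n^2$ by the definition of $\qnv$ as the KL minimizer over $\mathcal Q$, so it only remains to control the expected log moment-generating-type term.

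For the second term, I would use the Bayes identity to rewrite
\[
\int e^{\alpha L_n(\vxi,\vxi_0)} \pi(\vxi|\nX)\, d\vxi = \frac{\int e^{\alpha L_n(\vxi,\vxi_0)}\, p^n_\vxi(\nX)\, \pi(\vxi)\, d\vxi}{\int p^n_\vxi(\nX)\, \pi(\vxi)\, d\vxi}
\]
and bound numerator and denominator separately. The denominator is bounded below with high $P_0^n$-probability by exploiting Assumption~\ref{assump:Asf3}: on the R\'enyi neighborhood $A_n$ the likelihood ratio $p^n_\vxi/p^n_0$ concentrates, giving a lower bound of order $e^{-C(C_2+C_3) n \e_n^2}\,p_0^n(\nX)$; the prior mass condition on $A_n$ yields the $e^{-C_2 n\e_n^2}$ factor. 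The numerator is controlled by splitting $\Theta$ using the sieve set $\Theta_n(\e)$ and the test function $\phi_{n,\e}$ from Assumption~\ref{assump:Asf1}: on $\Theta_n(\e)^c$ the prior mass bound (Assumption~\ref{assump:Asf2}) suppresses the contribution, while on the sieve we further split into $\{L_n \leq C_1 n\e^2\}$ (trivially bounded by $e^{\alpha C_1 n \e^2}$) and $\{L_n > C_1 n\e^2\}$ (where Assumption~\ref{assump:Asf1}(ii) together with the test function makes the $P^n_\vxi$-mass exponentially small, and the Type-I error bound in~(i) takes care of the complementary event). Choosing $\alpha$ proportional to $n$ (i.e., $\alpha=1$ after rescaling through the $n\delta$ in Markov's step) and $\e=\e_n$, and combining the bounds, the log term is shown to be $O(n\e_n^2)$ in expectation.

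The technical obstacle is the uniform testing/prior-mass decomposition needed to bound the expected log of the numerator: one must carefully handle the coupling between the event $\{\phi_{n,\e}=0\}$, the likelihood ratio $p^n_\vxi/p^n_0$, and the exponential weight $e^{\alpha L_n}$, and ensure that the constant $C>C_2+C_3+2$ is large enough for all cross-terms to telescope correctly when one takes $\bbE_{P_0^n}$ of the log of a ratio (Jensen gives the correct direction on the numerator but the wrong direction on the denominator, which is why the high-probability lower bound on the denominator has to be carried through in a separate event-splitting argument). Once this is in place, collecting the two $O(n\e_n^2)$ contributions and the $n\eta_n^2$ contribution from the KL term and inserting back into Markov's inequality yields the claimed bound with an explicit $M$ depending on $C,C_1,C_2,C_3$.
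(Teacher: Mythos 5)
The paper does not actually prove this lemma --- it is imported as Theorem~2.1 of \cite{ZhGa2019} (``We first recall the following result from...''), so there is no in-paper argument to compare against; the relevant benchmark is the proof in that reference, and your outline reconstructs it faithfully: Markov's inequality, then the Gibbs/Donsker--Varadhan variational inequality with the posterior as reference measure (which isolates the term $\bbE_{P_0^n}[\scKL(\qnv\|\pi(\vxi|\nX))] \le n\eta_n^2$ --- note this is an upper bound, via $\bbE[\inf_q(\cdot)] \le \inf_q \bbE[(\cdot)]$, not the equality you assert), and finally the Ghosal--Ghosh--van der Vaart prior-mass/testing machinery for the exponential-moment term. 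The only substantive gloss is on the region $\{L_n > C_1 n\e^2\}$: since the weight $e^{\alpha L_n}$ is unbounded there, a single application of the test does not suffice; one must peel into shells $\{C_1 n j \e_n^2 < L_n \le C_1 n (j+1)\e_n^2\}$, invoke Assumption~\ref{assump:Asf1} at level $\e = \sqrt{j}\,\e_n$ on each shell, and sum the resulting geometric-type series --- the hypothesis $C > C_2 + C_3 + 2$ is precisely what makes that sum converge after dividing by the evidence lower bound coming from Assumptions~\ref{assump:Asf2} and~\ref{assump:Asf3}. With that detail supplied, your argument is the standard and correct proof of the cited result.
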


As noted before in Assumption~\ref{assump:Asf1}, the distance function $L_n(\vxi,\vxi_0)$ is arbitrary and it quantifies the distance between model $P_{\vxi}^n$ and $P_{0}^n$. For instance, $L_n(\vxi,\vxi_0)$ could be chosen to be $n \|\vxi-\vxi_0\|$. Also, note that the rate comprises of two sequences $\e_n^2$ and $\eta_n^2$.  The sequence $\e_n$ is the rate of convergence of the true posterior. In particular,~\cite{GGV} established $\e_n$ as the rate of convergence of the true posterior under Assumptions~\ref{assump:Asf1},~\ref{assump:Asf2},  and~\ref{assump:Asf3}. On the other hand, evident from its definition, the second sequence in the VB convergence rate is due to the variational approximation. Moreover, it is straightforward to observe that when $\mathcal{Q}$ is the family of all possible distributions, $\eta_n^2$ is $0$. Furthermore, under certain conditions on the variational family $\mathcal{Q}$ (see Assumption~\ref{assump:Asf11} ),  it can be shown that $\eta_n^2$ is bounded above by another convergent sequence $\e_n'^2$. In fact, in Lemma~\ref{lem:nv6} we show that $\e_n'=\e_n$ for the prior, the likelihood and the  variational family chosen for the optimal staffing problem discussed in Section~\ref{sec:App}. 

We first use the result above to prove the finite sample feasibility guarantee of the (VBJCCP) solution. Let us define the set where the true constraint $i\in\{1,2,\ldots m\}$ is satisfied as  \( F^i_0:=\{ \x \in \cX :  \{ g_i(\x,\vxi_0) \leq 0 \},   \}, \)
and VB-approximate feasible set is denoted as
\( \hat{F}_{VB}(\nX) := \{ \x \in \cX : Q^* \left(g_i(\x,\vxi) \leq 0 , \ i \in \{1,2,3,\ldots, m\}|\nX \right) \geq \beta \}. \) We show that the solutions obtained for (VBJCCP) are feasible for (TP) with  high probability. In particular, we show  that if  a point does not satisfy any of the constraints, then the probability of that point being in the VB approximate feasible set decays at a certain rate. We quantify  that  rate in  the following result.
%We prove the next result using the convergence  rate results for VB approximation in~\cite{ZhGa2019}. 
\begin{theorem}~\label{prop:2}
    For any $i\in\{1,2,\ldots,m\}$ let $\x\in \cX \backslash F_0^i$ and \(L^i_n(\vxi,\vxi_0):= n \sup_{\x \in \cX} \In_{(0,\infty)}( g_i(\x,\vxi_0) - g_i(\x,\vxi))\)
    satisfies Assumption~\ref{assump:Asf1}. Then under Assumptions~\ref{assump:Asf2} and~\ref{assump:Asf3}, there exists a constant $C_i>0$ for each $i\in\{1,2,\ldots m\}$, such that
    \vspace{-1.2em}
    \begin{align*}
        P_0^n[ \x \in \hat{F}_{VB}(\nX)  ] \leq \frac{C_i}{\beta}(\e_n^2+\eta_n^2), 
    \end{align*}
    %\vspace{-1.1em}
    where $\e_n^2 \to 0$ as $n \to \infty$ and 
    %{\small 
    \(\eta_n^2 = \frac{1}{n} \inf_{q \in \mathcal Q} \bbE_{P_0} \left[ \scKL(q(\vxi)\|\pi(\vxi | \nX) )\right].  \)
\end{theorem}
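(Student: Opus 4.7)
The overall strategy is to reduce the event $\{\x \in \hat{F}_{VB}(\nX)\}$ to a single moment inequality of the form $\int L_n^i(\vxi,\vxi_0) \qnv d\vxi \geq n\beta$, which Lemma~\ref{thrm:thm1} already controls with exactly the loss $L_n^i$ hypothesized in the statement. The construction of $L_n^i$ as the scaled supremum of the indicator $\In_{(0,\infty)}(g_i(\x',\vxi_0) - g_i(\x',\vxi))$ is precisely tailored to dominate, pointwise in $\vxi$, the indicator of the $i$th constraint being violated at the true parameter and satisfied at $\vxi$.

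First I would establish the key pointwise bound. Fix $\x \in \cX \backslash F_0^i$, so that $g_i(\x,\vxi_0) > 0$. For any $\vxi \in \Theta$ with $g_i(\x,\vxi) \leq 0$ we have $g_i(\x,\vxi_0) - g_i(\x,\vxi) > 0$, hence
\begin{equation*}
\In_{\{g_i(\x,\vxi) \leq 0\}}(\vxi) \;\leq\; \sup_{\x' \in \cX} \In_{(0,\infty)}\bigl(g_i(\x',\vxi_0) - g_i(\x',\vxi)\bigr) \;=\; \frac{L_n^i(\vxi,\vxi_0)}{n}.
\end{equation*}
Integrating this inequality against $\qnv$ yields $Q^*(g_i(\x,\vxi) \leq 0 \mid \nX) \leq \frac{1}{n}\int L_n^i(\vxi,\vxi_0)\,\qnv d\vxi$. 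Since the joint chance event is contained in each marginal event, on $\{\x \in \hat{F}_{VB}(\nX)\}$ we have $\beta \leq Q^*(g_1 \leq 0,\ldots,g_m \leq 0 \mid \nX) \leq Q^*(g_i(\x,\vxi) \leq 0 \mid \nX)$, and chaining gives the event inclusion
\begin{equation*}
\bigl\{\x \in \hat{F}_{VB}(\nX)\bigr\} \;\subseteq\; \Bigl\{\textstyle\int L_n^i(\vxi,\vxi_0)\,\qnv d\vxi \,\geq\, n\beta\Bigr\}.
\end{equation*}

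With this inclusion in hand, the finish is immediate: because $L_n^i$ satisfies Assumption~\ref{assump:Asf1} by hypothesis and Assumptions~\ref{assump:Asf2},~\ref{assump:Asf3} hold on the prior, Lemma~\ref{thrm:thm1} applied with $L_n = L_n^i$ and $\delta$ just below $\beta$ (then $\delta \uparrow \beta$) gives $P_0^n[\int L_n^i \qnv d\vxi \geq n\beta] \leq \frac{M_i}{\beta}(\e_n^2 + \eta_n^2)$ for a constant $M_i$ depending on $C, C_1, C_2, C_3$, whence $P_0^n[\x \in \hat{F}_{VB}(\nX)] \leq \frac{C_i}{\beta}(\e_n^2+\eta_n^2)$ with $C_i := M_i$. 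The main obstacle is not the chaining itself, which is essentially bookkeeping, but rather two points that I would watch carefully: the strict-vs-non-strict inequality when invoking Lemma~\ref{thrm:thm1} (handled either by the limit $\delta \uparrow \beta$ or by a direct Markov bound $\bbE_{P_0^n}[\int L_n^i \qnv d\vxi]/(n\beta)$ using the moment estimate from the proof of Lemma~\ref{thrm:thm1}), and, more substantively for applications, the verification that the tailored supremum-of-indicators loss $L_n^i$ really does admit a consistent test as in Assumption~\ref{assump:Asf1}(i)--(ii); the supremum over $\x \in \cX$ typically forces one to exploit structural regularity of $g_i(\cdot,\vxi)$ (monotonicity or Lipschitzness in $\x$) to collapse the sup into a testable functional of $\vxi$.
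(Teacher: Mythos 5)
Your proposal is correct and follows essentially the same route as the paper: both hinge on the observation that for $\x\in\cX\backslash F_0^i$ the event $\{g_i(\x,\vxi)\leq 0\}$ is contained in $\{g_i(\x,\vxi)<g_i(\x,\vxi_0)\}$, so the joint chance constraint is dominated (after reducing to the $i$th marginal) by $\frac{1}{n}\int L_n^i(\vxi,\vxi_0)\,\qnv\, d\vxi$, and then both invoke the convergence-rate result of Lemma~\ref{thrm:thm1}. The only difference is immaterial bookkeeping: the paper applies Markov's inequality to $Q^*(\cdot|\nX)$ first and then bounds its expectation, whereas you form the event inclusion first and apply the probability bound with $\delta\uparrow\beta$; both yield the stated $\frac{C_i}{\beta}(\e_n^2+\eta_n^2)$ bound.
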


\begin{proof}%[Proof of Theorem~\ref{prop:2}]
    Using Markov's inequality observe that for any  $\x\in \cX$,
    \begin{align}
        \nonumber
        P_0^n[ Q^* \left(g_i(\x,\vxi) \leq 0 , \ i \in \{1,\ldots, m\}|\nX \right) \geq \beta ] &\leq \frac{1}{\beta} \bbE_0[ Q^* \left(\cap_{i=1}^m \{ g_i(\x,\vxi) \leq 0\}|\nX \right) ]\\
        \leq   \frac{1}{\beta} &\bbE_0[ Q^* \left( \{ g_i(\x,\vxi) \leq 0\}|\nX \right) ] % ~\forall i \in \{1,\ldots,m\}.
        \label{eq:eq2} 
    \end{align}
    for any $i \in \{1,\ldots,m\}$. Fixing $i \in \{1,\ldots,m\}$, since $\x\in \cX \backslash F^i_0$ implies that $\x \in \{ g_i(\x,\vxi_0) > 0 \}$, it follows that 
    \(\{ g_i(\x,\vxi) \leq 0\} \subseteq \{ g_i(\x,\vxi) < g_i(\x,\vxi_0)\}. \)
    Therefore, for all $\x\in \cX \backslash F^i_0$, it  follows from~\eqref{eq:eq2} that
    \begin{align}
        P_0^n[ Q^* \left(g_i(\x,\vxi) \leq 0 , \ i \in \{1,\ldots, m\}|\nX \right) \geq \beta ] %&\leq \frac{1}{\beta} \bbE_0[ Q^* \left(\cap_{i=1}^m \{ g_i(\x,\vxi) \leq 0\}|\nX \right) ] \\
        & \leq \frac{1}{\beta} \bbE_0[ Q^* \left(\{ g_i(\x,\vxi) < g_i(\x,\vxi_0)\}|\nX \right) ].
        \label{eq:p21}
    \end{align}
    Now using~\revt{Lemma~\ref{thrm:thm1}}, it follows that if \(L^i_n(\vxi,\vxi_0):= n \sup_{\x \in \cX} \In_{(0,\infty)}( g_i(\x,\vxi_0) - g_i(\x,\vxi))\)
    %\prateek{Show this assumption holds.}
    satisfies Assumption~\ref{assump:Asf1}, then there exists a constant $C_i$ such that 
    \(\bbE_0[ Q^* \left(\{ g_i(\x,\vxi) < g_i(\x,\vxi_0)\}|\nX \right) ] \leq C_i (\e_n^2+\eta_n^2),\)
    where $\eta_n^2:= \frac{1}{n} \inf_{q \in \mathcal Q} \bbE_{P_0} \left[  \int_{\vxi}   	q(\vxi)  \log \frac{q(\vxi)} { \pi(\vxi | \nX)   } d\vxi  
    \right]$. \removet{Finally, using Theorem~\ref{thrm:thm1} in~\eqref{eq:p21}, the assertion follows immediately}.
\end{proof}

Now, we state a straightforward corollary of the result above establishing feasibility guarantee of the (BJCCP) solution.
\begin{corollary}~\label{corr:1}
    For any $i\in\{1,2,\ldots,m\}$ let $\x\in \cX \backslash F_0^i$ and \(L^i_n(\vxi,\vxi_0):= n \sup_{\x \in \cX} \In_{(0,\infty)}( g_i(\x,\vxi_0) - g_i(\x,\vxi))\)
    satisfies Assumption~\ref{assump:Asf1}. Then under Assumptions~\ref{assump:Asf2} and~\ref{assump:Asf3}, there exists a constant $C_i>0$ for each $i\in\{1,2,\ldots m\}$, such that
    %\vspace{-1.2em}
    \[ P_0^n[ \x \in \hat{F}_{B}(\nX)  ] \leq \frac{C_i}{\beta}\e_n^2, \]
    %\vspace{-1.1em}
    where $ \hat{F}_{B}(\nX) := \{ \x \in \cX : \Pi \left(g_i(\x,\vxi) \leq 0 , \ i \in \{1,2,3,\ldots, m\}|\nX \right) \geq \beta \}$, $\e_n^2 \to 0$ as $n \to \infty$.
\end{corollary}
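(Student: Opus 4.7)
The plan is to leverage the observation, made just before Theorem~\ref{prop:2}, that the VB posterior $\qnv$ defined in~\eqref{eq:vb_opt} coincides with the true posterior $\pi(\vxi|\nX)$ when the variational family $\mathcal{Q}$ is taken to be the set of all probability distributions on $\Theta$. Under this choice of $\mathcal{Q}$, the posterior is trivially in $\mathcal{Q}$, so the infimum in the definition of $\eta_n^2$ is attained at $q=\pi(\cdot|\nX)$ and evaluates to $\text{KL}(\pi(\vxi|\nX)\|\pi(\vxi|\nX))=0$, giving $\eta_n^2=0$. Consequently, $\hat F_{VB}(\nX)$ reduces to $\hat F_B(\nX)$, and the bound in Theorem~\ref{prop:2} collapses to the claimed $\frac{C_i}{\beta}\e_n^2$.

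More concretely, I would first apply Markov's inequality exactly as in~\eqref{eq:eq2} with $Q^*$ replaced by $\Pi(\cdot|\nX)$ to obtain, for any $i\in\{1,\ldots,m\}$,
\begin{align*}
P_0^n\left[\x \in \hat F_B(\nX)\right] \le \frac{1}{\beta}\bbE_0\left[\Pi\left(\{g_i(\x,\vxi)\le 0\}\,\big|\,\nX\right)\right].
\end{align*}
Then, using $\x \in \cX\setminus F_0^i$ to embed $\{g_i(\x,\vxi)\le 0\} \subseteq \{g_i(\x,\vxi) < g_i(\x,\vxi_0)\}$, the right-hand side is bounded by $\frac{1}{\beta}\bbE_0\left[\Pi\left(\{g_i(\x,\vxi) < g_i(\x,\vxi_0)\}\,\big|\,\nX\right)\right]$.

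Finally, I would invoke Lemma~\ref{thrm:thm1} with $\mathcal{Q}$ taken to be all distributions (so $\eta_n^2 = 0$) and the loss $L_n^i(\vxi,\vxi_0) := n \sup_{\x\in\cX} \In_{(0,\infty)}(g_i(\x,\vxi_0) - g_i(\x,\vxi))$, which by hypothesis satisfies Assumption~\ref{assump:Asf1}. This yields an absolute constant $C_i>0$ with
\begin{align*}
\bbE_0\left[\Pi\left(\{g_i(\x,\vxi) < g_i(\x,\vxi_0)\}\,\big|\,\nX\right)\right] \le C_i \e_n^2,
\end{align*}
from which the corollary follows immediately. There is no genuine obstacle here beyond bookkeeping: the only subtle step is verifying that taking $\mathcal{Q}$ to be all distributions forces $\eta_n^2=0$, which is immediate from Jensen's inequality applied to the definition. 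The asymptotic claim $\e_n^2\to 0$ is inherited from the hypothesis on the sequence $\{\e_n\}$ in Assumption~\ref{assump:Asf1}.
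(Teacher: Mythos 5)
Your proposal is correct and follows essentially the same route as the paper: the paper's own proof of Corollary~\ref{corr:1} simply cites Theorem~\ref{prop:2} together with the observation that $\qnv$ coincides with the true posterior and $\eta_n^2=0$ when $\mathcal{Q}$ is the set of all distributions on $\Theta$, which is exactly your argument (you merely re-expand the Markov-inequality steps of Theorem~\ref{prop:2} explicitly). The only trivial quibble is that $\eta_n^2=0$ follows because $\scKL(\pi(\cdot|\nX)\|\pi(\cdot|\nX))=0$ rather than from Jensen's inequality, but this does not affect the validity of the proof.
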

\begin{proof}
    The proof follows straightforwardly from Theorem~\ref{prop:2} and the fact that $\qnv$ is the same as the true posterior distribution and $\eta_n^2=0$, when  the variational family $\mathcal{Q}$ is fixed to the set of all possible distributions on $\Theta$.
\end{proof}

To leverage the result in~Lemma~\ref{thrm:thm1} in establishing the rate of convergence of the optimal value of (VBJCCP), we now fix $L_n(\vxi,\vxi_0)$ to specific positive distance functions in the following two lemmas. Lemma~\ref{lem:Gcons} establishes a rate of convergence of the VB posterior constraint set to the true constraint set. 
\begin{lemma}~\label{lem:Gcons}
    If $L^1_n(\vxi,\vxi_0) = n\sup_{\x\in \cX} |\In_{(-\infty,0]}(g(\x,\vxi))-  \In_{(-\infty,0]}(g(\x,\vxi_0)) |$ satisfies Assumption~\ref{assump:Asf1}, then under the conditions of Lemma~\ref{thrm:thm1}, for any $\delta>0$, we have
    \begin{align}
        P_0^n \left[ \sup_{\x \in \cX}|Q^* \left(g(\x,\vxi) \leq 0  |\nX \right) - \In_{(-\infty,0]}(g(\x,\vxi_0))| > \delta  \right]  \leq \frac{ M_1 }{\delta} ( \e_n^2 + \eta_n^2 ),
    \end{align}
    for a positive constant $M_1$.
\end{lemma}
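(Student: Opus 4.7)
The plan is to reduce the supremum-over-$\x$ quantity on the left-hand side to an integral of $L^1_n(\vxi,\vxi_0)/n$ against $q^*(\vxi\mid\nX)$, and then invoke Lemma~\ref{thrm:thm1} with this choice of loss.

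First I would observe that, since $\In_{(-\infty,0]}(g(\x,\vxi_0))$ does not depend on $\vxi$ and $q^*(\vxi\mid\nX)$ integrates to one, we can write
\[
Q^*\bigl(g(\x,\vxi)\leq 0 \mid \nX\bigr) - \In_{(-\infty,0]}(g(\x,\vxi_0))
= \int \bigl[\In_{(-\infty,0]}(g(\x,\vxi)) - \In_{(-\infty,0]}(g(\x,\vxi_0))\bigr]\,q^*(\vxi\mid\nX)\,d\vxi.
\]
Taking absolute values inside the integral (triangle inequality), and then pulling the supremum over $\x\in\cX$ inside the integral (since the bound holds pointwise in $\vxi$), yields
\[
\sup_{\x\in\cX} \bigl|Q^*\bigl(g(\x,\vxi)\leq 0 \mid \nX\bigr) - \In_{(-\infty,0]}(g(\x,\vxi_0))\bigr|
\leq \int \sup_{\x\in\cX}\bigl|\In_{(-\infty,0]}(g(\x,\vxi)) - \In_{(-\infty,0]}(g(\x,\vxi_0))\bigr|\,q^*(\vxi\mid\nX)\,d\vxi.
\]
The integrand on the right is exactly $L^1_n(\vxi,\vxi_0)/n$, so the left-hand side is bounded above by $\frac{1}{n}\int L^1_n(\vxi,\vxi_0)\,q^*(\vxi\mid\nX)\,d\vxi$.

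Next I would apply monotonicity of probabilities: the event $\{\sup_{\x}|\cdots|>\delta\}$ is contained in $\{\int L^1_n(\vxi,\vxi_0)\,q^*(\vxi\mid\nX)\,d\vxi > n\delta\}$. By hypothesis, $L^1_n$ satisfies Assumption~\ref{assump:Asf1}, and Assumptions~\ref{assump:Asf2} and~\ref{assump:Asf3} hold under the conditions of Lemma~\ref{thrm:thm1}, so Lemma~\ref{thrm:thm1} applied to $L^1_n$ yields
\[
P_0^n\Bigl[\int L^1_n(\vxi,\vxi_0)\,q^*(\vxi\mid\nX)\,d\vxi > n\delta\Bigr] \leq \frac{M_1}{\delta}(\e_n^2+\eta_n^2)
\]
for some constant $M_1>0$ depending on $C,C_1,C_2,C_3$, which closes the argument.

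The only non-routine step is justifying the interchange of the supremum and the integral, which is clean here because it is an inequality rather than an equality (we bound the sup of an integral by the integral of the sup, a straightforward consequence of monotonicity of integration). I expect no serious obstacle; the main work is simply recognizing the indicator-function identity that lets us rewrite the constant $\In_{(-\infty,0]}(g(\x,\vxi_0))$ as an integral against $q^*$ so that the two terms can be combined and matched to the prescribed form of $L^1_n$.
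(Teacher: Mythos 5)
Your proposal is correct and follows essentially the same route as the paper: the paper's proof establishes the identical chain of inequalities (triangle inequality for the integral against $q^*(\vxi\mid\nX)$, then sup of an integral bounded by the integral of the sup, yielding $\sup_{\x}|Q^*(\cdot)-\In_{(-\infty,0]}(\cdot)|\leq \frac{1}{n}\int L^1_n(\vxi,\vxi_0)\,q^*(\vxi\mid\nX)\,d\vxi$), merely written in the reverse direction, and then concludes by the same event inclusion and an application of Lemma~\ref{thrm:thm1}. No gaps.
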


In the following lemma, we establish the rate of convergence of the expected cost function  under VB posterior to  the true cost function. 
\begin{lemma}~\label{lem:Fcons}
    If $L^2_n(\vxi,\vxi_0) = n\sup_{\x\in \cX} |f(\x,\vxi)-  f(\x,\vxi_0) |$ satisfies Assumption~\ref{assump:Asf1}, then under conditions of Lemma~\ref{thrm:thm1} for any $\delta>0$,
    \begin{align}
        P_0^n[ \sup_{\x \in \cX} |\bbE_{\qnv}[f(\x,\vxi)] - f(\x, \vxi_0)| > \delta  ] \leq \frac{M_2 }{\delta} ( \e_n^2 + \eta_n^2 ).
    \end{align}
\end{lemma}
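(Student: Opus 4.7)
The plan is to reduce the claim to a direct application of Lemma~\ref{thrm:thm1} with the choice $L_n = L_n^2$. The strategy parallels that of Lemma~\ref{lem:Gcons}, with the cost function $f$ playing the role of the constraint indicator.

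First, I would exploit the fact that $f(\x,\vxi_0)$ is constant in $\vxi$, so $f(\x,\vxi_0) = \bbE_{\qnv}[f(\x,\vxi_0)]$ and Jensen's inequality gives $|\bbE_{\qnv}[f(\x,\vxi)] - f(\x,\vxi_0)| \leq \bbE_{\qnv}[|f(\x,\vxi) - f(\x,\vxi_0)|]$ for each $\x \in \cX$. Next, since $|f(\x,\vxi) - f(\x,\vxi_0)| \leq \sup_{\x' \in \cX}|f(\x',\vxi) - f(\x',\vxi_0)|$ holds pointwise in $\vxi$, integrating against $\qnv$ and then taking the supremum over $\x \in \cX$ (the resulting right-hand side is $\x$-free) yields
\[
\sup_{\x \in \cX}|\bbE_{\qnv}[f(\x,\vxi)] - f(\x,\vxi_0)| \leq \bbE_{\qnv}\!\left[\sup_{\x \in \cX}|f(\x,\vxi) - f(\x,\vxi_0)|\right] = \frac{1}{n}\int L_n^2(\vxi,\vxi_0) \qnv \, d\vxi.
\]

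With this deterministic bound in hand, monotonicity of probability gives
\[
P_0^n\!\left[\sup_{\x \in \cX}|\bbE_{\qnv}[f(\x,\vxi)] - f(\x,\vxi_0)| > \delta\right] \leq P_0^n\!\left[\int L_n^2(\vxi,\vxi_0) \qnv \, d\vxi > n\delta\right].
\]
Because $L_n^2$ satisfies Assumption~\ref{assump:Asf1} by hypothesis, and Assumptions~\ref{assump:Asf2} and~\ref{assump:Asf3} are in force through the statement of Lemma~\ref{thrm:thm1}, the latter applied with $L_n = L_n^2$ upper-bounds the right-hand side by $(M_2/\delta)(\e_n^2 + \eta_n^2)$ for an appropriate constant $M_2$, closing the argument.

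I do not expect a serious obstacle: the proof is essentially a one-step reduction once the correct inequality chain is set up. The only mild subtlety is the interchange $\sup_{\x} \bbE_{\qnv} \leq \bbE_{\qnv}\sup_{\x}$, which is legitimate because the underlying pointwise-in-$\vxi$ bound is monotone under integration; measurability of $\sup_{\x \in \cX}|f(\x,\vxi) - f(\x,\vxi_0)|$ as a function of $\vxi$ is already implicit in the hypothesis that $L_n^2$ qualifies for Assumption~\ref{assump:Asf1}.
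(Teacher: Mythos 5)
Your argument is correct and is exactly the one the paper intends: the paper omits the proof of this lemma as ``similar to Lemma~\ref{lem:Gcons},'' and your chain $\sup_{\x}|\bbE_{\qnv}[f(\x,\vxi)]-f(\x,\vxi_0)|\leq \frac{1}{n}\int L^2_n(\vxi,\vxi_0)\,\qnv\,d\vxi$ followed by an application of Lemma~\ref{thrm:thm1} mirrors that proof step for step, with $f$ in place of the constraint indicator.
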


The next theorem proves a rate of convergence on the optimal value of (VBJCCP) as a consequence of the lemmas above.

\begin{theorem}\label{thm:finite}
    If $L^1_n(\vxi,\vxi_0) = n\sup_{\x\in \cX} |\In_{(-\infty,0]}(g(\x,\vxi))-  \In_{(-\infty,0]}(g(\x,\vxi_0)) |$ and  $L^2_n(\vxi,\vxi_0) = n\sup_{\x\in \cX}$ $ |f(\x,\vxi)-  f(\x,\vxi_0) |$ satisfy Assumption~\ref{assump:Asf1}, then under Assumption~\ref{assump:Asf2},~\ref{assump:Asf3},\rev{and~\ref{ass:Cath0}}, and when $\mathcal X$ is compact, for (fixed) constants $M_1>0$ and $M_2>0$ %\harsha{are these fixed or arbitrary?}
    , we have for any $\eta>0$ and $\delta\in (0,\beta)$
    %\vspace{-1.2em}
    \[ P_0^n[ |V^*_{VB}(\nX) -  V^*| > 2\eta  ] \leq \left[ \frac{M_1}{\min(\delta,1-\beta)}  + \frac{M_2}{\eta} \right]  ( \e_n^2 + \eta_n^2 ), \]
    %\vspace{-1.1em}
    where $\e_n^2 \to 0$ as $n \to \infty$ and
    %{\small 
    \(\eta_n^2 : = \frac{1}{n} \inf_{q \in \mathcal Q} \bbE_{P_0} \left[  \int_{\vxi}   	q( \xi)  \log \frac{q(\xi)} { \pi(\xi | \nX)   } d\xi  
    \right].  \) 
    %and $\bbD( A, B) := \sup_{ x \in A} \inf_{ y \in B} \| x- y\|, $
    %is the distance between two sets $A$ and $B$.
\end{theorem}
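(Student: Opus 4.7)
The plan is to combine the two uniform-convergence results already established, namely Lemma~\ref{lem:Gcons} for the chance-constraint functional and Lemma~\ref{lem:Fcons} for the VB-expected cost, through a simple union bound. The key preliminary work is identifying a single tolerance at which the VB-feasible set and the (TP) feasible set actually coincide, so that the optimal-value comparison reduces to a one-line perturbation argument on a common domain.

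Concretely, I would fix $\alpha := \min(\delta,1-\beta)$ and introduce two ``good'' events
$A_1 := \{\sup_{\x \in \cX}|Q^*(g(\x,\vxi)\leq 0|\nX) - \In_{(-\infty,0]}(g(\x,\vxi_0))|\leq \alpha\}$ and
$A_2 := \{\sup_{\x \in \cX}|\bbE_{\qnv}[f(\x,\vxi)] - f(\x,\vxi_0)|\leq \eta\}$.
Lemmas~\ref{lem:Gcons} and~\ref{lem:Fcons} applied with thresholds $\alpha$ and $\eta$ respectively give $P_0^n[A_1^c]\leq (M_1/\alpha)(\e_n^2+\eta_n^2)$ and $P_0^n[A_2^c]\leq (M_2/\eta)(\e_n^2+\eta_n^2)$, which already supplies the two summands appearing in the claimed bound.

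Next I would show that on $A_1$ the feasible sets satisfy $\hat{F}_{VB}(\nX) = F_0$, where $F_0 := \{\x \in \cX : g(\x,\vxi_0)\leq 0\}$. For $\x \in F_0$, $\In_{(-\infty,0]}(g(\x,\vxi_0)) = 1$, so $A_1$ yields $Q^*\geq 1-\alpha \geq \beta$ since $\alpha \leq 1-\beta$, giving $\x \in \hat{F}_{VB}$. Conversely, if $\x \in \hat{F}_{VB}$ then $Q^* \geq \beta$, so on $A_1$ the indicator satisfies $\In_{(-\infty,0]}(g(\x,\vxi_0)) \geq \beta - \alpha > 0$ because $\alpha \leq \delta < \beta$; since this indicator takes values in $\{0,1\}$, it must equal $1$, i.e., $\x \in F_0$. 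On the intersection $A_1 \cap A_2$, the compactness of $\cX$ (and the attainment of the respective minima on $F_0$) combined with the uniform bound $|\bbE_{\qnv}[f(\cdot,\vxi)] - f(\cdot,\vxi_0)|\leq \eta$ on the common domain gives $|V^*_{VB}(\nX) - V^*| \leq \eta \leq 2\eta$ via a standard two-sided min-inequality: any near-minimizer of one problem is feasible for the other and its objective values under the two costs differ by at most $\eta$.

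A union bound $P_0^n[|V^*_{VB}(\nX) - V^*|>2\eta]\leq P_0^n[A_1^c] + P_0^n[A_2^c]$ then delivers the stated inequality. The main obstacle is purely the bookkeeping around the threshold $\alpha$: one has to pick a single slack that simultaneously makes truly feasible points certified by $Q^*\geq \beta$ (needing $\alpha \leq 1-\beta$) and disqualifies every truly infeasible point from being VB-feasible (needing $\alpha < \beta$), which is exactly what $\min(\delta,1-\beta)$ with $\delta \in (0,\beta)$ achieves and which explains the form of the first term in the bound. Everything else is a routine perturbation-of-optimal-value argument and an application of already-proved lemmas, so no deeper technical ingredient is needed.
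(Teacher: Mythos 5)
Your proposal is correct, and it takes a genuinely different route from the paper. The paper argues pointwise: it fixes a near-optimal point $\x$ of (TP), shows $\bbE_{\qnv}[f(\x,\vxi)]\geq V^*_{VB}(\nX)$ with high probability to control $P_0^n[V^*-V^*_{VB}(\nX)<-2\eta]$, then separately takes $\hat\x_n$ in the VB solution set, passes to a limit point via compactness, and controls $P_0^n[V^*-V^*_{VB}(\nX)\geq 2\eta]$, finally combining the two one-sided bounds with a maximum. You instead condition on the intersection of two global events $A_1\cap A_2$ and prove a deterministic statement there: the VB-feasible set and the true feasible set \emph{coincide exactly} (your two-sided indicator argument, which crucially exploits that $\In_{(-\infty,0]}(g(\x,\vxi_0))\in\{0,1\}$ and that $\alpha=\min(\delta,1-\beta)$ threads the needle between $\alpha\leq 1-\beta$ and $\alpha<\beta$), after which $|V^*_{VB}(\nX)-V^*|\leq\eta$ follows from comparing infima of two objectives that differ uniformly by at most $\eta$ over a common domain. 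This buys you several things: you never need the limit-point argument for $\hat\x_n$, you do not even need attainment of the minima (infima suffice once the feasible sets agree), and your final union bound $P_0^n[A_1^c]+P_0^n[A_2^c]$ is airtight, whereas the paper's concluding step combines the two one-sided tail bounds with a $\max$ where a sum is what the event decomposition actually warrants. The only points worth flagging explicitly in a write-up are that the argument presumes $F_0\neq\emptyset$ (so both values are finite), and that the set-coincidence step is precisely where the single-constraint restriction enters; neither is a gap relative to the paper, which makes the same implicit assumptions.
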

\begin{proof}%[Proof of Theorem~\ref{prop:finite}]
    
    %We will first show that the assertion of the theorem is  true for $m=1$. 
    Recall $\mathcal S^*_{VB}(\nX) $ is the solution of (VBJCCP) and $\mathcal S^* $ is the solution of (TP) with $\vxi=\vxi_0$.
    Observe that, since both $Q^* \left(g(\x,\vxi) \leq 0  |\nX \right)$ and $\In_{(-\infty,0]}(g(\x,\vxi_0))$ are upper- semicontinuous \rev{due to Assumption~\ref{ass:Cath0} } their corresponding super-level sets are closed, and since $\cX$ is compact the corresponding feasible sets are also compact. Also, if the corresponding feasible sets  are non-empty then the corresponding optimal sets $\mathcal S^*_{VB}(\nX)$ and $\mathcal S^*$ are too.
    
    %\harsha{What happens when this isn't the case?}
    Next fix a point $\x^*$ in the true solution set of (TP).
    %that lies in the interior of $\cX$; that is, 
    \remove{Since $\cX$ is compact, for any $\e>0$, there is $\x \in \cX$ such that for any $\e>0$, there exists $\x \in \cX$ such that $\|\x-\x^*\|<\e$ and $g(\x, \vxi_0)\leq 0$. This implies that there exists a sequence $\{\x_k\}\subset \cX$  such that $\x_k \to \x^*$ as $k \to \infty$ and $g(\x_k, \vxi_0)\leq 0$ for all $k\geq 1$.}
    Now \remove{fix $\x\in\cX $ such} \rev{noting} that $g(\x\rev{^*}, \vxi_0)\leq 0$ and, using Lemma~\ref{lem:Gcons}, \remove{observe} \rev{it follows} that \removet{for all  $n\geq n_0$}
    \begin{align*}
        P_0^n[ |Q^* \left(g(\x\rev{^*},\vxi) \leq 0  |\nX \right) &- \In_{(-\infty,0]}(g(\x\rev{^*},\vxi_0))| > \delta  ]\\ &=P_0^n[ |Q^* \left(g(\x\rev{^*},\vxi) \leq 0  |\nX \right) - 1| > \delta  ] 
        %\\
        \leq \frac{M_1}{\delta} ( \e_n^2 + \eta_n^2 ).
    \end{align*}
    Now, fix $\beta\in(0,1)$ and let $\delta=1-\beta$. It follows from the  above inequality that, \removet{for all $n>n_0$}
    %\begin{align}
        %P_0^n[ Q^* \left(g(\x,\vxi) \leq 0  |\nX \right) \in [1- \delta , 1+\delta]^c  ] \leq \frac{C}{\delta}M_n.
      \[  P_0^n[ Q^* \left(g(\x\rev{^*},\vxi) \leq 0  |\nX \right) < 1-\delta  ] = P_0^n[ Q^* \left(g(\x\rev{^*},\vxi) \leq 0  |\nX \right) \leq \beta] \leq \frac{M_1}{1-\beta}( \e_n^2 + \eta_n^2 ).\]
    Notice that for $\x\rev{^*}\in\cX $ such that $g(\x\rev{^*}, \vxi_0)\leq 0$,
    %\begin{align}
       \( \{ \remove{\x \in \mathcal X :} Q^* \left(g(\x\rev{^*},\vxi) \leq 0  |\nX \right) > \beta \} \subseteq \{ \remove{\x \in \mathcal X :} \bbE_{\qnv}[f(\x\rev{^*},\vxi)]\geq V_{VB}^*(\nX)   \}.\)
   % \end{align}
    Hence, \removet{for all $n\geq n_0$,}
        \begin{align}
        P_0^n[  \bbE_{\qnv}[f(\x\rev{^*},\vxi)]<V_{VB}^*(\nX) ]  \leq \frac{M_1}{1-\beta}( \e_n^2 + \eta_n^2 ).
        \label{eq:CR1}
    \end{align}
    %$\bbE_{\qnv}[f(\x,\vxi)]  \geq V_{VB}^*(\nX)$. Taking $\limsup$ on either sides, we obtain
    Next, using the result in part(1) of~Lemma~\ref{lem:Fcons}, \removet{for all  $n\geq n_0$,} \remove{any} $\x=\rev{\x^*}\remove{\in\cX}$, and \rev{any} $\delta>0$
    \begin{align}
        P_0^n[ |\bbE_{\qnv}[f(\x\rev{^*},\vxi)] - f(\x\rev{^*}, \vxi_0)| > \delta  ] \leq \frac{M_2}{\delta}( \e_n^2 + \eta_n^2 ).
        \label{eq:CR2}
    \end{align}
    Observe that, for any $\eta>0$
    \begin{align*}
        &P_0^n[f(\x\rev{^*}, \vxi_0)- V_{VB}^*(\nX)  < -2\eta  ] 
        \\
        &\leq  P_0^n[  \bbE_{\qnv}[f(\x\rev{^*},\vxi)] - V_{VB}^*(\nX) < -\eta  ] 
        \\
        &\quad + P_0^n[ f(\x\rev{^*}, \vxi_0) - \bbE_{\qnv}[f(\x\rev{^*},\vxi)]< -\eta  ]
        \\
        &\leq  P_0^n[ \{  \bbE_{\qnv}[f(\x\rev{^*},\vxi)] -V_{VB}^*(\nX) <- \eta \}] + \frac{M_2}{\eta}( \e_n^2 + \eta_n^2 )
        \\
        &\leq  P_0^n[ \{  \bbE_{\qnv}[f(\x\rev{^*},\vxi)] - V_{VB}^*(\nX) < 0 \}] + \frac{M_2}{\eta}( \e_n^2 + \eta_n^2 )
        %+  P_0^n[ \{ V_{VB}^*(\nX) - \bbE_{\qnv}[f(\x,\vxi)] < -\eta\} ] +
        \\
        &\leq \frac{M_1}{1-\beta}( \e_n^2 + \eta_n^2 )  +\frac{M_2}{\eta}( \e_n^2 + \eta_n^2 )= \left[ \frac{M_1}{1-\beta} +\frac{M_2}{\eta}\right] ( \e_n^2 + \eta_n^2 ),
    \end{align*}
    where the second inequality follows from~\eqref{eq:CR2} and the last inequality uses~\eqref{eq:CR1}. \remove{Now, since $\x$ can be chosen arbitrarily close to  $\x^*$ } \rev{Therefore ,} it follows that
    \begin{align}
        P_0^n[ V^* - V_{VB}^*(\nX)  <- 2\eta  ] \remove{= P_0^n[ V^* - V_{VB}^*(\nX)< -2\eta  ]} \leq  \left[ \frac{M_1}{1-\beta} +\frac{M_2}{\eta}\right] ( \e_n^2 + \eta_n^2 ).
        \label{eq:eqF1}
    \end{align}
    
    Next, let $\hat\x_n \in \mathcal{S}_{VB}^*$; that is $\hat\x_n \in  \cX$, $Q^* \left(g(\hat\x_n,\vxi) \leq 0  |\nX \right) \geq \beta $ and $V_{VB}^*(\nX) = \bbE_{\qnv}[f(\hat\x_n,\vxi)]$. Since $\cX$ is compact, we assume that  $\hat\x_n \to \x_0$ (the limit point of the sequence $\{\hat\x_n\} \subseteq \cX$).
    
     Recall that Lemma~\ref{lem:Gcons} holds uniformly over any $\x \in \cX$, therefore using the fact that $Q^* \left(g(\hat\x_n,\vxi) \leq 0  |\mathbf X_n \right) -  \In_{(-\infty,0]}(g(\hat \x_n,\vxi_0)) \leq | Q^* \left(g(\hat\x_n,\vxi) \leq 0  |\mathbf X_n \right) -  \In_{(-\infty,0]}(g(\hat \x_n,\vxi_0))| \leq \sup_{\x\in \cX}|Q^* \left(g(\x,\vxi) \leq 0  |\mathbf X_n \right) -  \In_{(-\infty,0]}(g( \x,\vxi_0))|$, we have for \removet{all $n\geq n_0$ and} $\delta>0$, %\harsha{Don't think I follow this step} %(~\textcolor{red}{(yet to prove the finite sample bound)})
    \begin{align}
        P_0^n\left[  Q^* \left(g(\hat\x_n,\vxi) \leq 0  |\mathbf X_n \right) \leq  \In_{(-\infty,0]}(g(\hat \x_n,\vxi_0)) +\delta \right]\geq 1- \frac{M_1}{\delta} ( \e_n^2 + \eta_n^2 ).
    \end{align} 
    Next using the fact that $Q^* \left(g(\hat\x_n,\vxi) \leq 0  |\nX \right) \geq \beta $ for every $n\geq 1$, it follows %from~\eqref{eq:p7} \harsha{Forward reference!} 
    that $\hat \x_n$ is a feasible point of (TP) for $\delta\leq \beta$, that is
    %\begin{align}
        $\left\{\remove{\x \in \mathcal{X} :} Q^* \left(g(\hat\x_n,\vxi) \leq 0  |\mathbf X_n \right) \leq  \In_{(-\infty,0]}(g(\hat\x_n,\vxi_0)) +\delta\right\} \subset \{\remove{\x \in \mathcal{X} : } \In_{(-\infty,0]}(g(\hat\x_n,\vxi_0)) +\delta \geq \beta \}$.
    %\end{align}
    %Now choosing $\delta$ such that $\delta\leq \beta$, 
    Therefore, it follows that 
    \begin{align}
        \nonumber
        \bigg\{Q^* \left(g(\hat\x_n,\vxi) \leq 0  |\mathbf X_n \right) \leq  \In_{(-\infty,0]}(g(\hat\x_n,\vxi_0)) +\delta\bigg\} &\subseteq \{ \In_{(-\infty,0]}(g(\hat\x_n,\vxi_0)) +\delta \geq \beta \}
        \\
        &\subseteq \{ f(\hat\x_n,\vxi_0)\geq V^* \},
    \end{align}
    since the penultimate  condition implies that the $\hat\x_n$ is a feasible point of (TP). Therefore, for any $\delta\leq \beta$,
    %\begin{align}
      \(  P_0^n\left[ f(\hat\x_n,\vxi_0)\leq V^* \right]\leq \frac{M_1}{\delta} ( \e_n^2 + \eta_n^2 ).\)
    %\end{align} 
    %\limsup_{n \to \infty} Q^* \left(g(\hat\x_n,\vxi) \leq 0  |\nX \right) \geq \beta$ implies $\In_{(-\infty,0]}(g(\x^*,\vxi_0))\geq \beta$ and $\beta\in (0,1)$. Therefore, it follows that
    %$f(\x^*,\vxi_0)\geq V^*$. 
    Since Lemma~\ref{lem:Fcons} holds uniformly over all $\x$ and  therefore using the fact that $f(\hat\x_n,\vxi_0)-\bbE_{\qnv}[f(\hat\x_n,\vxi)] \leq | \bbE_{\qnv}[f(\hat\x_n,\vxi)] - f(\hat\x_n,\vxi_0)| \leq \sup_{\x\in \cX}|\bbE_{\qnv}[f(\x,\vxi)]-f(\x,\vxi_0)|$, for any $\delta>0$, we have %(~\textcolor{red}{(yet to prove the finite sample bound)})
    %\begin{align*}
       $ P_0^n\left[  \bbE_{\qnv}[f(\hat\x_n,\vxi)] +\delta \geq  f(\hat\x_n,\vxi_0)  \right] = P_0^n\bigg[  V_{VB}^*(\mathbf X_n) +\delta $ $\geq  f(\hat\x_n,\vxi_0)  \bigg] 
        \geq 1- \frac{M_2}{\delta} ( \e_n^2 + \eta_n^2 )$,
    %\end{align*} 
   and therefore
    \(
        P_0^n\left[  V_{VB}^*(\nX)+\delta \leq  f(\hat\x_n,\vxi_0)  \right] \leq  \frac{M_2}{\delta} ( \e_n^2 + \eta_n^2 ).\)
    %\end{align*} 
    Observe that for any $\eta>0$ 
    %Lemma~\ref{lem:of}~(2), 
    \begin{align}
        \nonumber
        P_0^n&\left[ V^*-  V_{VB}^*(\mathbf X_n) \geq  2\eta \right] 
        \\
        \nonumber
        &\leq P_0^n\left[ V^*-f(\hat\x_n,\vxi_0) \geq  \eta \right] 
        +P_0^n\left[  f(\hat\x_n,\vxi_0) -   V_{VB}^*(\mathbf X_n)\geq  \eta \right] 
        \\
        \nonumber
        &\leq P_0^n\left[ V^*-f(\hat\x_n,\vxi_0) \geq  0  \right] 
        +P_0^n\left[  f(\hat\x_n,\vxi_0) -   V_{VB}^*(\mathbf X_n)\geq  \eta \right] 
        \\
        &\leq \frac{M_1}{\delta} ( \e_n^2 + \eta_n^2 ) + \frac{M_2}{\eta} ( \e_n^2 + \eta_n^2 ) = \left[ \frac{M_1}{\delta}  + \frac{M_2}{\eta} \right] ( \e_n^2 + \eta_n^2 ),
        \label{eq:eqF2}
    \end{align} 
    where $\delta<\beta$.
    
    Combining equation~\eqref{eq:eqF1} and~\eqref{eq:eqF2}, we obtain
    \begin{align}
        \nonumber
        P_0^n\left[ |V^*-  V_{VB}^*(\mathbf X_n) | \geq  2\eta \right] & \leq \max \left( \left[ \frac{M_1}{\delta}  + \frac{M_2}{\eta} \right] , \left[ \frac{M_1}{1-\beta} +\frac{M_2}{\eta}\right]   \right) ( \e_n^2 + \eta_n^2 )
        \\
        & =  \left[ \frac{M_1}{\min(\delta,1-\beta)}  + \frac{M_2}{\eta} \right]  ( \e_n^2 + \eta_n^2 ).
    \end{align}
\end{proof}

The next result establishes the convergence rate of the optimal value of (BJCCP) with single constraint.
\begin{corollary}~\label{corr:finiteB}
    If $L^1_n(\vxi,\vxi_0) = n\sup_{\x\in \cX} |\In_{(-\infty,0]}(g(\x,\vxi))-  \In_{(-\infty,0]}(g(\x,\vxi_0)) |$ and  $L^2_n(\vxi,\vxi_0) = n\sup_{\x\in \cX}$ $ |f(\x,\vxi)-  f(\x,\vxi_0) |$ satisfy Assumption~\ref{assump:Asf1}, then under Assumption~\ref{assump:Asf2} and~\ref{assump:Asf3}, and when $\mathcal X$ is compact, for (fixed) constants $M_1>0$ and $M_2>0$ %\harsha{are these fixed or arbitrary?}
    , we have for any $\eta>0$ and $\delta\in (0,\beta)$,
    %\vspace{-1.2em}
    \( P_0^n[ |V^*_{B}(\nX) -  V^*| > 2\eta  ] \leq \left[ \frac{M_1}{\min(\delta,1-\beta)}  + \frac{M_2}{\eta} \right]   \e_n^2 , \)
    %\vspace{-1.1em}
    where $V^*_{B}(\nX)$ is the optimal value of (BJCCP) with single constraint and $\e_n^2 \to 0$ as $n \to \infty$.
\end{corollary}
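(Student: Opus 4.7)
The plan is to derive this corollary as an immediate consequence of Theorem~\ref{thm:finite}, exploiting the fact that (BJCCP) is nothing but a special case of (VBJCCP) in which the variational family is taken to be unrestricted. Specifically, I would observe that when $\mathcal Q$ is chosen to be the collection of all probability distributions on $\Theta$, the variational optimization problem in~\eqref{eq:vb_opt} attains its infimum $0$ at the true posterior, so that $q^*(\vxi|\nX)=\pi(\vxi|\nX)$ almost surely. Consequently, the (VBJCCP) problem and the (BJCCP) problem coincide, and in particular $V^*_{VB}(\nX)=V^*_{B}(\nX)$.

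Next, under this choice of $\mathcal Q$, the sequence
\[
\eta_n^2=\frac{1}{n}\inf_{q\in\mathcal Q}\bbE_{P_0}\!\left[\int_{\vxi} q(\vxi)\log\frac{q(\vxi)}{\pi(\vxi|\nX)}\,d\vxi\right]
\]
is identically zero because the infimum is attained by taking $q=\pi(\cdot|\nX)$, which makes the $\mathrm{KL}$ divergence vanish. So the only remaining term in the Theorem~\ref{thm:finite} bound is $\e_n^2$.

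Putting these two observations together, I would simply invoke Theorem~\ref{thm:finite} with $\mathcal Q$ the set of all distributions, replace $V^*_{VB}(\nX)$ by $V^*_{B}(\nX)$ on the left-hand side, and drop the $\eta_n^2$ term on the right-hand side, which yields the claimed inequality. There is no substantive obstacle here; the corollary is essentially a bookkeeping consequence of the more general VB result, and the proof is analogous in structure to Corollary~\ref{corr:1} which handled the feasibility guarantee for (BJCCP) in the same manner.
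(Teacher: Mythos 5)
Your proposal is correct and follows exactly the paper's own argument: specialize Theorem~\ref{thm:finite} to the case where $\mathcal{Q}$ is the set of all distributions on $\Theta$, so that $\qnv$ coincides with the true posterior, $V^*_{VB}(\nX)=V^*_{B}(\nX)$, and $\eta_n^2=0$. Nothing is missing.
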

\begin{proof}
    The proof is a direct consequence of Theorem~\ref{thm:finite} and the fact that $V^*_{VB}$ is the same as $V^*_{B}$ and $\eta_n^2=0$, when  the variational family $\mathcal{Q}$ is fixed to the set of all possible distributions on $\Theta$.
\end{proof}
%\subsection{Proof of Proposition~\ref{prop:2}}

\subsubsection{Characterizing $\eta_n^2$}~\label{sbsec:PropEta}
In order to characterize $\eta_n^2$,  we specify conditions on variational family $\cQ$ such that  
$\eta_n^2=O(\e_n'^2)$, for some $\e_n' \geq \frac{1}{\sqrt n}$ and $\e_n'\to 0$ as $n\to\infty$. We impose following condition on the variational family $\cQ$ that lets us obtain a bound on $\eta_n^2$.
%on the first term of~\eqref{eq:eq1}.
\begin{assumption}\label{assump:Asf11}
    There exists a sequence of distributions $\{q_n(\cdot)\} \subset \cQ$ such that for a positive constant $C_1$,
    %\begin{align}
    \(\frac{1}{n}  \left[ \scKL\left(q_n(\vxi)\|\pi(\vxi) \right) + \bbE_{q_n(\vxi)} \left[ \scKL\left(dP^n_{0}(\nX)\| dP^n_{\vxi}(\nX) \right) \right]  \right] \leq C_1 \e_n'^2. \)
        %\label{eq:eq1c}
    %\end{align}
\end{assumption} 
If the observations in $\nX$ are i.i.d, then observe that $ \frac{1}{n}    \bbE_{q_n(\vxi)} \left[ \scKL\left(dP^n_{0}(\nX))\| dP^n_{\vxi}(\nX) \right) \right]  = \bbE_{q_n(\vxi)} $ $\left[ \scKL\left(dP_{\lambda_0})\| dP_{\vxi}(\xi) \right) \right].  $
 Intuitively, this assumption implies that the variational family must contain a sequence of distributions that converges weakly to a Dirac delta distribution concentrated at the true parameter $\vxi_0$ otherwise the second term in the LHS of~Assumption~\ref{assump:Asf11} will be non-zero. We demonstrate the satisfaction of Assumption~\ref{assump:Asf11} for a specific variational family in Lemma~\ref{lem:nv6}.
 %Also note that the above assumption does not imply that the minimizing sequence $\qrs$ (automatically) converges weakly to a dirac-delta distribution at the true parameter $\vxi_0$. Furthermore, unlike Theorem 2.3 of \cite{Zhang2018}, our condition on $\sQ$ in Assumption~\ref{assump:Asf11}, to obtain a bound on $\eta_n^R(\ \gamma)$, does not require the support of the distributions in $\sQ$ to shrink to the true parameter $\vxi_0$ at some appropriate rate, as the numbers of samples increases.  
%the condition required by \cite[Theorem 2.3]{Zhang2018} on $\sQ$ to obtain a bound of the order $\e^2_n$ on $\eta_n^R(\ \gamma)$ is more restrictive than as assumed in Assumption~\ref{assump:Asf11}.
%Next, we show that 
%
\begin{proposition}\label{prop:eta_n}
    Under Assumption~\ref{assump:Asf11} and $C_9>0$,
    \vspace{0em}
    \(\eta_n^2 \leq  C_9\e_n'^2. \)
\end{proposition}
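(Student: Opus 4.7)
The plan is to upper bound the infimum defining $\eta_n^2$ by evaluating the objective at the specific sequence $\{q_n\}$ provided by Assumption~\ref{assump:Asf11}. Since $\eta_n^2$ is an infimum over $q \in \mathcal Q$ and $\{q_n\} \subset \mathcal Q$, we immediately have
\[
\eta_n^2 \leq \frac{1}{n}\bbE_{P^n_0}\left[\int q_n(\vxi) \log \frac{q_n(\vxi)}{\pi(\vxi|\nX)} d\vxi\right].
\]
So the task reduces to relating the right-hand side to the sum appearing in the assumption.

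The key algebraic step is to unpack the posterior via Bayes' rule~\eqref{eq:post}, writing $\log \pi(\vxi|\nX) = \log p^n_\vxi(\nX) + \log \pi(\vxi) - \log m(\nX)$ where $m(\nX) := \int p^n_\vxi(\nX)\pi(\vxi)\,d\vxi$ is the evidence. Substituting and integrating against $q_n(\vxi)$ yields
\[
\int q_n(\vxi) \log \frac{q_n(\vxi)}{\pi(\vxi|\nX)}\,d\vxi = \scKL(q_n \| \pi) - \int q_n(\vxi) \log p^n_\vxi(\nX)\,d\vxi + \log m(\nX).
\]
Taking expectation under $P^n_0$ and applying Fubini's theorem to exchange the $\vxi$-integral with $\bbE_{P^n_0}$, I can introduce $\bbE_{P^n_0}[\log p^n_0(\nX)]$ in an add-and-subtract step to produce the KL term $\bbE_{q_n}[\scKL(dP^n_0 \| dP^n_\vxi)]$.

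This leaves a leftover term $\bbE_{P^n_0}[\log m(\nX) - \log p^n_0(\nX)]$. The crucial observation is that this quantity is nonpositive: by Jensen's inequality applied to $\log$, it is bounded above by $\log \bbE_{P^n_0}[m(\nX)/p^n_0(\nX)] = \log 1 = 0$, since $m$ is a probability density. Consequently,
\[
\bbE_{P^n_0}\!\left[\int q_n(\vxi) \log \frac{q_n(\vxi)}{\pi(\vxi|\nX)}\,d\vxi\right] \leq \scKL(q_n\|\pi) + \bbE_{q_n}\!\left[\scKL(dP^n_0 \| dP^n_\vxi)\right].
\]
Dividing by $n$ and invoking Assumption~\ref{assump:Asf11} gives $\eta_n^2 \leq C_1 \e_n'^2$, so we may take $C_9 = C_1$.

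I do not expect a serious obstacle here; the argument is essentially bookkeeping. The only subtlety worth flagging is the Fubini exchange and the implicit integrability of $\log p^n_\vxi(\nX)$ under the product measure $q_n \otimes P^n_0$, which is ensured whenever the right-hand side in Assumption~\ref{assump:Asf11} is finite. The Jensen step for the evidence term is standard and is what makes the construction clean.
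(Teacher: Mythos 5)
Your argument is correct and is exactly the computation the paper has in mind: the paper's proof of Proposition~\ref{prop:eta_n} simply states that the bound "follows straightforwardly using the definition of $\eta_n^2$ and Assumption~\ref{assump:Asf11}," and your decomposition via Bayes' rule together with the Jensen step showing $\bbE_{P^n_0}[\log m(\nX)-\log p^n_0(\nX)] = -\scKL(P^n_0\|M^n)\leq 0$ is the standard way to make that precise. No gap.
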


%For many finite dimensional data generating distributions, $\e_n$ and $\e_n'$ can be  shown to be of order $\sqrt{\log n/n}$. 
\subsubsection{Existence of Tests}

Recall that our convergence rates and finite sample feasibility guarantee depend on existence of certain tests for the specified distance functions. We prove a general result which is applicable to distance functions for which the set $\{\vxi \in \Theta: L_n(\vxi,\vxi_0)>n\e^2  \}$ is fixed for any $\e\in (0,1]$ and is a null set for any $\e>1$ (for example such distance functions should satisfy $n^{-1}L_n(\vxi,\vxi_0) \in \{0,1\}$). Notice that the distance functions $L_n^1(\vxi,\vxi_0)$ in Theorem~\ref{thm:finite} and  $L_n^i(\vxi,\vxi_0)$ in Theorem~\ref{prop:2}
satisfy these conditions.

We recall the following result from~\cite[Lemma 7.2]{GGV} which is due to Le Cam. 

\begin{lemma}~\label{lem:Lecam}
    Suppose that there exist tests $\omega_n$ such that for fixed sets $\mathcal{P}_0$ and $\mathcal{P}_1$, of probability measures
    \[ \sup_{P^n_0 \in \mathcal{P}_0 } \bbE_{P^n_0}[\omega_n] \to 0 \text{ and } \sup_{P^n \in \mathcal{P}_1 } \bbE_{P^n}[1-\omega_n] \to 0\text{ as } n \to \infty,\]
    then there exist tests $\phi_n$ and constants $K>0$ such that  
    \[ \sup_{P^n_0 \in \mathcal{P}_0 } \bbE_{P^n_0}[\phi_n] \leq e^{-Kn} \text{ and } \sup_{P^n \in \mathcal{P}_1 } \bbE_{P^n}[1-\phi_n] \leq e^{-Kn}.\]
\end{lemma}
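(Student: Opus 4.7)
The plan is to use Le Cam's classical block-and-majority-vote construction. The starting point is that since $\sup_{P_0^n \in \mathcal{P}_0} \mathbb{E}_{P_0^n}[\omega_n] \to 0$ and $\sup_{P^n \in \mathcal{P}_1} \mathbb{E}_{P^n}[1 - \omega_n] \to 0$, I can fix some $\epsilon \in (0, 1/2)$ and choose an integer $n_0$ large enough that both the Type-I and Type-II errors of $\omega_{n_0}$ are uniformly bounded by $\epsilon$ over $\mathcal{P}_0$ and $\mathcal{P}_1$, respectively. The goal is then to boost the constant error $\epsilon$ into an exponentially small error by aggregating independent copies of $\omega_{n_0}$.

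For any $n \geq n_0$, partition the $n$ observations into $k_n := \lfloor n/n_0 \rfloor$ disjoint blocks of size $n_0$ (discarding any leftover), and let $\omega_{n_0}^{(j)}$ denote the test $\omega_{n_0}$ applied to the $j$-th block. Since the measures in $\mathcal{P}_0$ and $\mathcal{P}_1$ are taken to be $n$-fold product measures (which is the setting of Le Cam's lemma), the random variables $\{\omega_{n_0}^{(j)}\}_{j=1}^{k_n}$ are mutually independent Bernoulli-type variables under each $P^n$ in either set. Define the new test to be the majority vote
\[
\phi_n \;=\; \In\!\Bigl\{\textstyle\sum_{j=1}^{k_n} \omega_{n_0}^{(j)} \;\geq\; k_n/2\Bigr\}.
\]

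Under any $P_0^n \in \mathcal{P}_0$ each $\omega_{n_0}^{(j)}$ has mean at most $\epsilon < 1/2$, so by Hoeffding's inequality the probability that at least half of the block tests reject is bounded by $\exp(-2 k_n (1/2 - \epsilon)^2)$; the symmetric argument under $P^n \in \mathcal{P}_1$ gives the same bound for $\mathbb{E}_{P^n}[1 - \phi_n]$. Since $k_n \geq n/(2 n_0)$ for $n$ sufficiently large, both quantities are at most $e^{-K n}$ with $K = (1/2 - \epsilon)^2 / n_0$, yielding the stated conclusion.

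The main obstacle is ensuring the independence of the block tests $\omega_{n_0}^{(j)}$ needed for Hoeffding's inequality; this genuinely requires the product-measure structure on $\mathcal{P}_0$ and $\mathcal{P}_1$. The paper emphasises that $\nX$ need not be i.i.d., so invoking this lemma in the non-i.i.d. setting would implicitly require either the appropriate product structure along a blocking of the sample, or a replacement of Hoeffding by a concentration inequality for dependent sequences (e.g.\ via a mixing assumption). Once the independence is granted, the remainder is a routine exponential concentration argument.
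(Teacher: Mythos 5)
Your construction is the classical Le Cam blocking-and-aggregation argument, and it is correct in the setting where the result actually holds: the Hoeffding computation, the choice $k_n=\lfloor n/n_0\rfloor$, and the resulting constant $K=(1/2-\epsilon)^2/n_0$ are all fine (modulo the usual convention that the exponential bound is only claimed for $n\geq n_0$, or that $K$ is adjusted to absorb finitely many small $n$). Note, however, that the paper does not prove this lemma at all -- it is imported verbatim as Lemma 7.2 of Ghosal--Ghosh--van der Vaart, so there is no in-paper proof to compare against. The source's own argument is phrased slightly differently: rather than an explicit majority vote plus Hoeffding, it uses Le Cam's identification of the minimax testing risk between convex hulls with a total-variation/Hellinger affinity and the submultiplicativity of that affinity across independent blocks of product measures; this is mathematically equivalent to your route, and yours is arguably the more elementary and self-contained presentation. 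Your closing caveat is the substantive point: both proofs genuinely require the product (or at least block-independent) structure of $\mathcal{P}_0$ and $\mathcal{P}_1$, whereas the paper explicitly allows $\nX$ to be non-i.i.d.\ and nevertheless invokes this lemma in Proposition~\ref{prop:tests}. So the ``gap'' you identify is not in your proof but in the paper's unstated hypothesis; to apply the lemma as the paper does, one must either restrict to product likelihoods (as in the $M/M/c$ application, where the inter-arrival and service increments are i.i.d.) or replace Hoeffding by a concentration inequality valid under the relevant dependence.
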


\begin{proposition}~\label{prop:tests}
  Given $\Theta \subseteq \mathbb{R}^d$, if there exists a sequence of test function  $\phi'_{n,\e}$ for any $\e>1$, such that $\bbE_{P^n_0}[\phi'_{n,\e}] \leq e^{-K' n\e^2}$, then the distance functions $L_n^i(\vxi,\vxi_0)$ in Theorem~\ref{prop:2} for any $i\in \{1,\ldots,m\}$ and $L_n^1(\vxi,\vxi_0)$ in Theorem~\ref{thm:finite}  satisfy Assumption~\ref{assump:Asf1}.
\end{proposition}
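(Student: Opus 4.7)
The starting point is a structural observation: both $L_n^i(\vxi,\vxi_0)$ (in Theorem~\ref{prop:2}) and $L_n^1(\vxi,\vxi_0)$ (in Theorem~\ref{thm:finite}) are built out of indicator functions, so each takes values in $\{0,n\}$ only. Consequently, the alternative level set $\{\vxi \in \Theta : L_n(\vxi,\vxi_0) \geq C_1 n \e^2\}$ exhibits a clean dichotomy: it coincides with the fixed ($\e$-independent) set $B := \{\vxi : L_n(\vxi,\vxi_0) = n\}$ whenever $C_1 \e^2 \leq 1$, and it is empty whenever $C_1 \e^2 > 1$. This collapses Assumption~\ref{assump:Asf1}(ii) to a testing problem against a single frozen alternative in the only nontrivial regime.

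Setting $\Theta_n(\e) := \Theta$, I would split the construction of $\phi_{n,\e}$ by the value of $\e$. When $\e > \max(1, 1/\sqrt{C_1})$, the alternative is empty so condition~(ii) is vacuous, and I take $\phi_{n,\e} := \phi'_{n,\e}$; condition~(i) is then immediate from the hypothesis $\bbE_{P^n_0}[\phi'_{n,\e}] \leq e^{-K' n \e^2}$. When $\e \leq 1/\sqrt{C_1}$, the alternative is the frozen set $B$, so a single test suffices: fix any $\e^\star > \max(1, 1/\sqrt{C_1})$ and set $\phi_{n,\e} := \phi'_{n,\e^\star}$. Since $n\e^2 \leq n/C_1$ in this regime while $n\e^2 \geq 1$ by hypothesis, the Type~I bound $e^{-K' n (\e^\star)^2}$ is absorbable into the form $e^{-C n \e^2}$ for a suitably enlarged constant $C = C(K', C_1, \e^\star)$, delivering~(i).

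The main obstacle is securing the Type~II bound in~(ii) in the small-$\e$ regime, since the hypothesis only supplies a Type~I rate. My plan is to invoke Lemma~\ref{lem:Lecam}: because $B$ is a fixed alternative family and $\phi'_{n,\e^\star}$ is a \emph{test function} in the sense of Definition~\ref{def:test}, it is by definition consistent against $\{P^n_\vxi : \vxi \in B\}$, i.e., $\sup_{\vxi \in B}\bbE_{P^n_\vxi}[1-\phi'_{n,\e^\star}] \to 0$, while the hypothesis already gives $\bbE_{P^n_0}[\phi'_{n,\e^\star}] \to 0$. Le~Cam's lemma then upgrades this pair of consistencies into a single test satisfying $\bbE_{P^n_0}[\phi_n] \leq e^{-Kn}$ and $\sup_{\vxi \in B}\bbE_{P^n_\vxi}[1-\phi_n] \leq e^{-Kn}$ simultaneously. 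Using $n\e^2 \leq n/C_1$ once more lets me rewrite $e^{-Kn}$ as $e^{-K C_1 n \e^2}$, completing~(ii) and hence verifying Assumption~\ref{assump:Asf1} for both distance functions with explicit constants depending on $K', C_1, \e^\star$.
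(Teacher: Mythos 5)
Your overall architecture is the same as the paper's: exploit the fact that $L_n^i(\vxi,\vxi_0)/n$ and $L_n^1(\vxi,\vxi_0)/n$ are $\{0,1\}$-valued, so that the alternative set $\{\vxi : L_n(\vxi,\vxi_0) \geq C_1 n\e^2\}$ is either the fixed ($\e$- and $n$-independent) set $B=\{\vxi : L_n(\vxi,\vxi_0)=n\}$ or empty; take $\Theta_n(\e)=\Theta$; dispose of the large-$\e$ regime by vacuousness of condition~(ii) together with the hypothesis on $\phi'_{n,\e}$; and handle the small-$\e$ regime by feeding a pair of consistent tests for the frozen null/alternative into Le~Cam's Lemma~\ref{lem:Lecam}. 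Your bookkeeping of the constant $C_1$ and of the absorption $n\e^2 \leq n/C_1$ is, if anything, more explicit than the paper's, which implicitly takes the threshold to be $n\e^2$.

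The one genuine flaw is the input you hand to Le~Cam's lemma. You assert that because $\phi'_{n,\e^\star}$ is a test function in the sense of Definition~\ref{def:test}, it is ``by definition consistent against $\{P^n_\vxi : \vxi \in B\}$,'' i.e., $\sup_{\vxi\in B}\bbE_{P^n_\vxi}[1-\phi'_{n,\e^\star}]\to 0$. That is not what Definition~\ref{def:test} says: it defines a test as any measurable $[0,1]$-valued sequence, and separately defines consistency as a property a test may or may not possess. The hypothesis of the proposition controls only the Type-I error of $\phi'_{n,\e}$ (and only for $\e>1$); it says nothing about its power against $B$, so the second limit required by Lemma~\ref{lem:Lecam} is unjustified as you state it. The paper closes exactly this gap by a different device: it invokes the general fact that for finite-dimensional (parametric) models and fixed null and alternative sets, tests consistent in both directions always exist --- for instance, built from the Kolmogorov--Smirnov statistic via the Glivenko--Cantelli theorem --- and it feeds those tests, rather than $\phi'$, into Le~Cam's lemma. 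With that substitution (and the usual implicit separation of $P_0^n$ from $\{P^n_\vxi:\vxi\in B\}$, which the paper also leaves tacit), your argument goes through unchanged.
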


\revt{A general recipe  for constructing $\phi'_{n,\epsilon}$ in Proposition~\ref{prop:tests} is as follows: construct an indicator function that restricts the domain of the true distribution to its tails, so that the required concentration bounds can be computed easily. We demonstrate this strategy to construct a sequence of test  functions with an example problem in Lemma~5.1.} Thus, for the distance function $L_n^2(\vxi,\vxi_0)$ in Theorem~\ref{thm:finite}, we have to either use~\cite[Lemma~7.1]{GGV} or construct an explicit test function to satisfy Assumption~\ref{assump:Asf1}. Interested readers may also refer to~\cite{GGV,ZhGa2019,jaiswal2019b} for further discussions on existence of tests and/or constructing bespoke test functions.

\subsection{Asymptotic consistency}
Although, the rate of convergence result implies asymptotic consistency, it will be evident from the application presented in~Section~\ref{sec:App} that the regularity conditions required to compute the rate are difficult to verify in practice. Consequently, in this section, we identify slightly more restrictive, but more easily verifiable, conditions on the  prior, likelihood, and  the variational family to guarantee asymptotic consistency of the optimal value and solution of (VBJCCP). We assume that $m=1$ in the remainder of this section.
%To show this  Note, the consistency of the VB posterior distribution follows as a consequence of the  regularity conditions imposed on the  prior, likelihood, and  the variational family.

First, we impose the following conditions on the prior distribution.
%\vspace{.5em}
\begin{assumption}[Prior Density]~\label{assume:prior}
  \begin{enumerate}
    \item[(1)] The prior density function $\pi(\vxi)$ is continuous with non-zero measure in the neighborhood of the true parameter
    $\vxi_0$, and
    \item[(2)] there exists a constant $M_p > 0$ such that
    $\pi(\vxi) \leq M_p~\forall \vxi \in \Theta$ and $\bbE_{ \pi(\vxi)}[|\vxi|]< \infty$. 
  \end{enumerate}
\end{assumption}

Assumption~\ref{assume:prior} is satisfied by a large class of prior distributions. Next, we assume that the likelihood function satisfies the following asymptotic normality property. Recall that $P^n_0 \equiv P^n_{\vxi_0}$. %(see ~\cite{vdV00} as well),
%\vspace{1em}
\begin{assumption}[Local Asymptotic Normality]~\label{assume:lan}
  Fix $\vxi_0 \in \Theta$. The sequence of log-likelihood functions $\{ \log P^n_{\vxi}(\nX)\}$ satisfies a \emph{local asymptotic normality (LAN)} condition, if there exists a sequence of matrices $\{r_n\}$, a matrix $I(\vxi_0)$ and a sequence of random vectors $\{\Delta_{n,\vxi_0}\}$ weakly converging to $\mathcal{N}(0,I(\vxi_0)^{-1})$ as $n \to \infty$, such that for every compact set $K \subset \mathbb{R}^d$ 
  %and converging sequence $h_n \to h$
  \[
  \sup_{h \in K} \left| \log \frac{{P^n_{\vxi_0 + r_n^{-1} h}}(\nX)}{{P^n_{\vxi_0}}(\nX)}  - h^T I(\vxi_0)
  \Delta_{n,\vxi_0} + \frac{1}{2} h^T I(\vxi_0)h \right| \xrightarrow{P^n_{0}} 0 \ \text{as $n  \to \infty$  }.
  \]
  %where $o_{P_\vxi}(\cdot)$ is a probabilistic order term. 
  %\vinayak{Is it better if this is the uniform LAN version?}
  %$I(\vxi)$ is the Fisher information matrix of the likelihood function, and
\end{assumption}
The LAN condition is standard, and holds for a wide variety of models. The assumption affords significant flexibility in the analysis by allowing the likelihood to be asymptotically approximated by a scaled Gaussian centered around $\vxi_0$~\cite{vdV00}. Any likelihood model that is twice-continuously differentiable satisfies the LAN condition~\cite[Eq. 7.15]{vdV00}.  Next, we place a restriction on the variational 
family $\cQ$:
\begin{assumption}~\label{assume:Var}
    \begin{enumerate}
   \item The variational family  $\cQ$ must contain distributions that are absolutely continuous with respect to the prior %\textcolor{red}{(prior??)}
   distribution. %$\pi(\vxi|\nX)$.
   \item There exists a sequence of distributions $\{q_n(\vxi)\}$ in the variational family $\cQ$ that converges to a Dirac delta distribution $\delta_{\vxi_0}$ at the rate of $\sqrt{n}$ and with mean $\int \vxi q_n(\vxi) d\vxi = \hat \vxi_n$, the maximum likelihood estimate.
   %, which is same as the posterior degenerates to $\delta_{\vxi_0}$, that is $\gamma_n=\sqrt{n}$. 
   \item The  differential entropy of the rescaled density (Definition~\ref{def:rescale}) of such sequence of distributions is positive and finite.
    \end{enumerate}
\end{assumption}
The first condition ensures that the $\scKL$ divergence in~\eqref{eq:vb_opt} is not undefined for all distributions in $\cQ$, that is not absolutely continuous with respect to the  posterior distribution. 
The Bernstein von-Mises theorem~\cite{vdV00} shows that under mild regularity conditions, the posterior converges to a Dirac delta distribution at the true parameter $\vxi_0$  at the rate  of $\sqrt{n}$, and the second condition ensures that the $\scKL$ divergence is well defined for all large enough $n$. 
%This condition does not, by any means, imply that the LCVB  and NVB  approximate posterior converges  to Dirac delta distribution at the true parameter $\vxi_0$ as $n\to \infty$.
These three assumptions together imply that the VB approximate posterior weakly converges to $\delta_{\vxi_0}$ as number of samples increases.   

\begin{lemma}[\cite{WaBl2017}]~\label{lem:VBcons}
     Under Assumptions~\ref{assume:prior},~\ref{assume:lan}, and~\ref{assume:Var}
    %, and~\ref{assume:LR}
    \(
        \qnv \in \underset{q \in \cQ}{\arg \min}~\scKL\left(
        q(\vxi)  \| \pi(\vxi | 
            \nX)
        \right) \Rightarrow \delta_{\vxi_0} \text{ in }~P_{0}^n-\text{probability as}~n
        \to \infty.\)
\end{lemma}
\begin{proof}
    See~\cite[Theorem 5(1)]{WaBl2017} for a proof.
\end{proof}
It must be noted that, as stated, the result in~\cite{WaBl2017} claims  $\qnv  \Rightarrow \delta_{\vxi_0}$ $P_{0}-$almost surely as $n\to \infty$. However, the proof of this result~\cite[Eqn 21-Supplementary material]{WaBl2017} can only establish convergence in $P_{0}^n-\text{probability as}~n \to \infty$. 
Now to establish asymptotic properties of the  optimal value and optimal solution to (VBJCCP), we assume that the following regularity conditions are satisfied by the cost and the constraint functions.
\begin{assumption}\label{ass:Cath}
    We assume that 
    \begin{enumerate}
        \item $f(\x,\cdot)$ and $g_i(\x,\cdot)$  are measurable and continuous for every $\x \in \cX$, and $f(\cdot,\vxi)$ and $g(\cdot,\vxi)$ are continuous for almost every $\vxi \in \Theta$. 
        \item $f(\cdot,\vxi)$ is locally Lipschitz continuous in $\x$ with for almost every $\vxi \in \Theta$, such that for $\x_1,\x_2$ in compact set $\cX$, $|f(\x_1,\vxi)-f(\x_2,\vxi)|\leq K_{\cX}(\vxi)\|x_1-x_2\|$ for some $K_{\cX}(\vxi)\leq \bar K_{\cX}$ for almost every $\vxi\in\Theta$.  
        \item $f(\x,\cdot)$ is uniformly integrable with respect to any $q$ in the  variational family $\cQ$, that is for any $\e>0$ and $\x\in \cX$, there exist a compact set $K_{\e}\subset \Theta$, such that $\int_{\Theta\backslash K_{\e}} f(\x,\vxi)q(\vxi)d\vxi<\e$.
    \end{enumerate}
\end{assumption}

We first establish consistency of the constraint function, under the `true' data generating distribution. 
% \begin{lemma}\label{lem:pw}
%     Under Assumptions~\ref{assume:prior},~\ref{assume:lan},~\ref{assume:Var}, and~\ref{ass:Cath}, we show that  for each $\x \in \cX$
%     \[  Q^* \left( \prod_{i=1}^{m}\In_{(-\infty,0]}(g_i(\x,\vxi) \big|\nX \right) \to \prod_{i=1}^{m}\In_{(-\infty,0]}(g_i(\x,\vxi_0))~ \text{in }~P_{0}^n-\text{probability as}~n \to \infty\]
% \end{lemma}

\begin{lemma}\label{lem:pw}
    Under Assumptions~\ref{assume:prior},~\ref{assume:lan},and~\ref{assume:Var}, %and~\ref{ass:Cath}
     we show that for any $\delta>0$ 
    \( P_0^n\left(\sup_{x\in \cX}\left|\bbE_{\qnv}\left[\prod_{i=1}^{m}\In_{(-\infty,0]}(g_i(\x,\vxi)\right] -  \prod_{i=1}^{m}\In_{(-\infty,0]}(g_i(\x,\vxi_0))\right|>\delta\right)\to 0\) as $n\to \infty$.
\end{lemma}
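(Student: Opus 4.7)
The plan is to reduce the statement to the weak convergence of the VB posterior established in Lemma~\ref{lem:VBcons}, using the continuity of $g$ from Assumption~\ref{ass:Cath} to translate weak convergence of measures into convergence of the (discontinuous) indicator expectations. Since we have assumed $m=1$ in this subsection, the target is
\[
\sup_{\x\in\cX}\bigl|\bbE_{\qnv}[\In_{(-\infty,0]}(g(\x,\vxi))]-\In_{(-\infty,0]}(g(\x,\vxi_0))\bigr|\xrightarrow{P_0^n} 0.
\]
The conceptual obstacle is that the integrand is discontinuous at $\{g=0\}$, so a direct application of the portmanteau theorem fails on the boundary of the feasible set.

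First, I would fix $\eta>0$ and pass to the good event $E_n(\eta):=\{Q^*(\|\vxi-\vxi_0\|>\eta\mid\nX)<\eta\}$, which by Lemma~\ref{lem:VBcons} satisfies $P_0^n(E_n(\eta))\to 1$. Restricting the integral over $\vxi$ to $\{\|\vxi-\vxi_0\|\le\eta\}$ and crudely bounding the indicator difference by $1$ elsewhere gives
\[
\bigl|\bbE_{\qnv}[\In_{(-\infty,0]}(g(\x,\vxi))]-\In_{(-\infty,0]}(g(\x,\vxi_0))\bigr|\le \int_{\|\vxi-\vxi_0\|\le\eta}\bigl|\In_{(-\infty,0]}(g(\x,\vxi))-\In_{(-\infty,0]}(g(\x,\vxi_0))\bigr|\,q^*(\vxi\mid\nX)\,d\vxi+\eta
\]
on $E_n(\eta)$, uniformly in $\x$. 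The remaining task is to show that the integrand vanishes uniformly once $\eta$ is sufficiently small.

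Second, to deal with the discontinuity I would sandwich the indicator between Lipschitz approximants. For $\rho>0$ define $h^+_\rho(t):=\max(0,\min(1,1-t/\rho))$ and $h^-_\rho(t):=\max(0,\min(1,-t/\rho))$ so that $h^-_\rho(g(\x,\cdot))\le \In_{(-\infty,0]}(g(\x,\cdot))\le h^+_\rho(g(\x,\cdot))$, both sides being continuous in $\vxi$ by Assumption~\ref{ass:Cath}. Weak convergence of $\qnv$ to $\delta_{\vxi_0}$ then yields pointwise convergence of $\bbE_{\qnv}[h^{\pm}_\rho(g(\x,\cdot))]$ to $h^{\pm}_\rho(g(\x,\vxi_0))$ in $P_0^n$-probability, and the gap $h^+_\rho-h^-_\rho$ is supported on $\{|g(\x,\vxi)|\le\rho\}$; by continuity of $g(\x,\cdot)$ at $\vxi_0$, for each $\x$ with $|g(\x,\vxi_0)|>\rho$ the two bounds agree with the indicator in a whole $\vxi_0$-neighborhood, yielding pointwise convergence.

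Third, I would promote the pointwise convergence to uniform convergence on $\cX$ (compact) by combining joint continuity of $g$ with a finite cover argument: decompose $\cX$ into the interior stratum $\cX_\rho:=\{\x:|g(\x,\vxi_0)|>\rho\}$ and the boundary stratum $\cX\setminus\cX_\rho$. On $\cX_\rho$, equicontinuity of $g(\cdot,\vxi_0)$ on the compact set lets us cover by finitely many balls and apply the pointwise conclusion at their centers, absorbing the oscillation of $g$ into the choice of $\eta$. The hard part is $\cX\setminus\cX_\rho$: near the boundary $\{g(\x,\vxi_0)=0\}$, the indicator can flip under arbitrarily small perturbations of $\vxi$, so uniform convergence cannot hold without a regularity hypothesis preventing $g(\cdot,\vxi_0)$ from being identically zero on open subsets of $\cX$. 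I would therefore either invoke such a mild non-degeneracy assumption on $g(\cdot,\vxi_0)$ (so that $\mathrm{Leb}(\cX\setminus\cX_\rho)\to 0$ as $\rho\to 0$ and the boundary contribution can be driven below $\delta$), or equivalently interpret the supremum via epi-convergence of the chance-constraint indicator. Sending $\rho\to 0$ and then $\eta\to 0$ after $n\to\infty$ completes the argument.
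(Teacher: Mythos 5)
Your proposal does not close, and you say so yourself: the boundary stratum $\cX\setminus\cX_\rho$ forces you to invoke a non-degeneracy hypothesis on $g(\cdot,\vxi_0)$ that is not among the lemma's stated assumptions. That gap is genuine, not merely technical. At a point $\x$ with $g(\x,\vxi_0)=0$ the target value is $\In_{(-\infty,0]}(g(\x,\vxi_0))=1$, while $\bbE_{\qnv}[\In_{(-\infty,0]}(g(\x,\vxi))]=Q^*(g(\x,\vxi)\le 0\mid\nX)$ can stay bounded away from $1$ for every $n$: if $\qnv$ is a Gaussian shrinking around $\vxi_0$ and $\{ \vxi : g(\x,\vxi)\le 0\}$ is a half-space whose boundary passes through $\vxi_0$, this probability hovers near $1/2$. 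Weak convergence of $\qnv$ to $\delta_{\vxi_0}$ combined with continuity of $g$ cannot control indicator expectations on the set where $g(\x,\vxi_0)=0$, so the Lipschitz-sandwich machinery --- which is the right tool away from that set --- has nothing to bite on there. Your second step also quietly uses continuity of $g(\x,\cdot)$, which comes from Assumption~\ref{ass:Cath} rather than from the three assumptions the lemma actually lists.

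The paper's own proof takes a different and much shorter route: it claims, via Scheff\'e's lemma, that $\qnv$ converges to $\delta_{\vxi_0}$ in total variation, after which $\sup_{\x\in\cX}\bigl|Q^*(\cap_{i}\{g_i(\x,\vxi)\le 0\}\mid\nX)-\delta_{\vxi_0}(\cap_{i}\{g_i(\x,\vxi)\le 0\})\bigr|\le d_{TV}(\qnv,\delta_{\vxi_0})$ holds trivially and uniformly in $\x$, with no continuity of $g$, no compactness of $\cX$, and no stratification. You should note, however, that this shortcut is itself suspect: the total-variation distance between an absolutely continuous $\qnv$ and the point mass $\delta_{\vxi_0}$ equals $1$ for every $n$ (compare the two measures on the singleton $\{\vxi_0\}$), and Scheff\'e's lemma requires the pointwise limit of the densities to be a probability density, which it is not here. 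So the difficulty you ran into at $\{\x : g(\x,\vxi_0)=0\}$ is not an artifact of your method; it is exactly where the lemma needs an additional hypothesis (for instance $g(\x,\vxi_0)\neq 0$ for all $\x\in\cX$, or a condition guaranteeing the limit law puts no mass on the boundary of each constraint set), and the paper's argument glosses over it. With such a condition added, your sandwich-plus-finite-cover argument can be completed; without it, neither your proof nor the paper's is airtight.
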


% In addition to point-wise convergence established in the result above, we require hypo-convergence of the constraint functions under the VB posterior to establish consistency of $\mathcal S^*_{VB}(\nX)$  and $V^*_{VB}(\nX)$. 
% %The following result establishes hypo-convergence 
% \begin{lemma}\label{lem:hypo}
%     Under Assumptions~\ref{assume:prior},~\ref{assume:lan},~\ref{assume:Var}, and~\ref{ass:Cath}, we show that, $Q^* \left(\prod_{i=1}^{m}\In_{(-\infty,0]}(g_i(\x,\vxi)) |\nX \right)$ hypo-converges to $\prod_{i=1}^{m}\In_{(-\infty,0]}(g_i(\x,\vxi_0))$ in $~P_{0}^n-\text{probability as}~n
%         \to \infty$ ; that is
%     \begin{align}
%         \hlim_{n \to \infty} Q^* \left(\prod_{i=1}^{m}\In_{(-\infty,0]}(g_i(\x,\vxi))  |\nX \right)
%         &\overset{P_0^n}{=} \prod_{i=1}^{m}\In_{(-\infty,0]}(g_i(\x,\vxi_0)).
%         %\label{eq:p4};
%     \end{align}
% \end{lemma}

The next lemma establishes the point-wise and uniform convergence of the expected cost.
\begin{lemma}\label{lem:of}
    Under Assumptions~\ref{assume:prior},~\ref{assume:lan},~\ref{assume:Var}, and~\ref{ass:Cath}, we show that,
    \begin{enumerate}
        \item For each $\x \in \cX$,
        \(  \bbE_{\qnv}[f(\x,\vxi)]  \to f(\x, \vxi_0)~ \text{in} ~P_{0}^n-\text{probability as}~n
        \to \infty\).
        \item Suppose $\cX$ is compact, then $\sup_{x\in \cX}|\bbE_{\qnv}[f(\x,\vxi)]- f(\x,\vxi_0)|$ converges to 0 \text{in} $~P_{0}^n-\text{probability as}~n
        \to \infty$; that is for any $\delta>0$
        \(\lim_{n\to\infty}P_0^n\left( \sup_{x\in \cX} \left|\bbE_{\qnv}[f(\x,\vxi)]- f(\x,\vxi_0)\right|>\delta \right) =0. \)%, that is
        %$\elim_{ n \to \infty} \bbE_{\qnv}[f(\x_n,\vxi)] = f(\x_0,\vxi_0)~ P_{0}-a.s$. 
        %$\elim_{ n \to \infty} \bbE_{\qnv}[f(\x,\vxi)] \overset{P_0^n}{=} f(\x,\vxi_0).$
    \end{enumerate}
\end{lemma}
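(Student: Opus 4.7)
The plan is to deduce part~(1) from Lemma~\ref{lem:VBcons} together with a truncation argument leveraging uniform integrability and continuity of $f(\x,\cdot)$, and then bootstrap it to the uniform statement in part~(2) using a covering argument based on the local Lipschitz regularity of $f(\cdot,\vxi)$.

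For part~(1), fix $\x\in\cX$ and $\delta>0$. By Assumption~\ref{ass:Cath}(3), applied to both a generic $q \in \cQ$ and to the Dirac limit, we can pick a compact set $K \subset \Theta$ (which we take to contain an open ball around $\vxi_0$) so that $\int_{\Theta\setminus K} |f(\x,\vxi)|\,q(\vxi)d\vxi < \delta/4$ uniformly for $q\in \cQ$, and $|f(\x,\vxi_0)|$ is finite. Then I would decompose
\[
\bigl|\bbE_{\qnv}[f(\x,\vxi)] - f(\x,\vxi_0)\bigr| \le \int_{K}|f(\x,\vxi)-f(\x,\vxi_0)|\qnv d\vxi \;+\; \delta/4 \;+\; |f(\x,\vxi_0)|\int_{\Theta\setminus K}\qnv d\vxi.
\]
Since $f(\x,\cdot)$ is continuous on the compact set $K$, it is uniformly continuous there, so for a small enough $\eta>0$ the integrand in the first term is bounded by $\delta/4$ on $K \cap \{\|\vxi-\vxi_0\|\le\eta\}$ and by a constant $M_K$ on the remainder of $K$. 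Using Lemma~\ref{lem:VBcons} (equivalently Definition~\ref{def:degen}) and the fact that $K$ contains a neighborhood of $\vxi_0$, both $\int_{K\cap\{\|\vxi-\vxi_0\|>\eta\}}\qnv d\vxi$ and $\int_{\Theta\setminus K}\qnv d\vxi$ are bounded above by $\int_{\{\|\vxi-\vxi_0\|>\eta'\}}\qnv d\vxi$ for some $\eta'>0$, and this last quantity tends to $0$ in $P_0^n$-probability. Combining the estimates yields pointwise convergence in probability.

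For part~(2), I would use Assumption~\ref{ass:Cath}(2). Since $|f(\x_1,\vxi)-f(\x_2,\vxi)| \le \bar K_{\cX}\|\x_1-\x_2\|$ for almost every $\vxi$, Jensen/Fubini gives that both $\x \mapsto \bbE_{\qnv}[f(\x,\vxi)]$ and $\x \mapsto f(\x,\vxi_0)$ are Lipschitz on $\cX$ with constant $\bar K_{\cX}$. Given $\delta>0$, by compactness of $\cX$ pick a finite $(\delta/(4\bar K_{\cX}))$-net $\{\x_1,\ldots,\x_N\}\subset\cX$. A standard triangle inequality shows
\[
\sup_{\x\in\cX}\bigl|\bbE_{\qnv}[f(\x,\vxi)] - f(\x,\vxi_0)\bigr| \;\le\; \max_{1\le i\le N}\bigl|\bbE_{\qnv}[f(\x_i,\vxi)] - f(\x_i,\vxi_0)\bigr| + \delta/2,
\]
and part~(1) together with a union bound over the finite net controls the maximum in $P_0^n$-probability, completing the proof.

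The main obstacle is part~(1): weak convergence of $\qnv$ to a Dirac mass does not immediately pass through an unbounded integrand, so one must carefully combine uniform integrability (which is uniform over $q\in\cQ$ but only pointwise in $\x$) with a compact-truncation-plus-continuity argument. Part~(2) is then essentially a routine compactness/equicontinuity reduction once one observes that the Lipschitz property of $f(\cdot,\vxi)$ transfers to its expectation.
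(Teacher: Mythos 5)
Your proposal is correct and takes essentially the same route as the paper: part (1) is the same compact-truncation argument, combining the uniform integrability in Assumption~\ref{ass:Cath}(3) with the concentration of $\qnv$ at $\delta_{\vxi_0}$ from Lemma~\ref{lem:VBcons}, and part (2) rests on the same key observation that $\x\mapsto\bbE_{\qnv}[f(\x,\vxi)]$ is Lipschitz with constant $\bar K_{\cX}$ by Assumption~\ref{ass:Cath}(2). The only difference is presentational: the paper passes from pointwise to uniform convergence by citing Corollary~2.2 of Newey (1991), whereas you prove that step directly with a finite $\delta/(4\bar K_{\cX})$-net and a union bound, which is just a self-contained proof of the cited result.
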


% The next lemma establishes the point-wise and epi- consistency of the expected cost.
% \begin{lemma}\label{lem:of}
%     Under Assumptions~\ref{assume:prior},~\ref{assume:lan},~\ref{assume:Var}, and~\ref{ass:Cath}, we show that,
%     \begin{enumerate}
%         \item For each $\x \in \cX$,
%         \(  \bbE_{\qnv}[f(\x,\vxi)]  \to f(\x, \vxi_0)~ \text{in} ~P_{0}^n-\text{probability as}~n
%         \to \infty\)
%         \item For each $\x \in \cX$ $\bbE_{\qnv}[f(\x,\vxi)]$ epi-converges to $ f(\x,\vxi_0)$ \text{in} $~P_{0}^n-\text{probability as}~n
%         \to \infty$, that is
%         %$\elim_{ n \to \infty} \bbE_{\qnv}[f(\x_n,\vxi)] = f(\x_0,\vxi_0)~ P_{0}-a.s$. 
%         $\elim_{ n \to \infty} \bbE_{\qnv}[f(\x,\vxi)] \overset{P_0^n}{=} f(\x,\vxi_0).$
%     \end{enumerate}
% \end{lemma}

%For brevity, we state the following results without any assumptions and  proofs; it will be stated formally in Appendix C.
%Let $\mathcal S^*_{VB} $ and $\mathcal S^*$ denote the optimal solution set of (VBJCCP) and (TP) respectively. Then, we want  to show that
Using the results in Lemma~\ref{lem:pw} and~\ref{lem:of},  Theorem~\ref{thm:1} establishes the asymptotic consistency  of the optimal values of (VBJCCP) and, as a consequence, (BJCCP) with single constraint. %computed using VB approach.
%\vspace{-0.3em}
\begin{theorem}\label{thm:1}
    Under Assumptions~\ref{assume:prior},~\ref{assume:lan},~\ref{assume:Var}, and \ref{ass:Cath} and when $\cX$ is a compact set, we have $V^*_{VB}(\nX) \overset{P_0^n}{\to}  V^*$ as $n\to\infty$.
    %and  $\bbD(\mathcal S^*_{VB}(\nX),\mathcal S^*) \to  0~P_{0}-a.s. ~\text{as}~n\to\infty$,
    %where $\bbD( A, B) := \sup_{\x \in A} \inf_{\mathbf y \in B} \|\x-\mathbf y\|, $
    %is the distance between two sets $A$ and $B$. 
\end{theorem}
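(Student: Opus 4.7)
The plan is to adapt the argument already developed for Theorem~\ref{thm:finite} to the asymptotic setting, but using the in-probability uniform-convergence statements of Lemmas~\ref{lem:pw} and~\ref{lem:of} in place of the finite-sample rate bounds. I would establish the two inequalities $V^*_{VB}(\nX) \leq V^* + \eta$ and $V^*_{VB}(\nX) \geq V^* - \eta$ separately, each holding with $P_0^n$-probability tending to $1$ for any fixed $\eta > 0$. Compactness of $\cX$ together with continuity of $f(\cdot,\vxi_0)$ guarantees that the infima in (TP) and (VBJCCP) are attained, and also justifies passing to convergent subsequences of VB-optimizers below.

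For the upper bound on $V^*_{VB}(\nX)$, fix an optimizer $\x^* \in \cX$ of (TP), so that $g(\x^*,\vxi_0) \leq 0$ and therefore $\In_{(-\infty,0]}(g(\x^*,\vxi_0)) = 1$. By Lemma~\ref{lem:pw}, the quantity $Q^*(g(\x^*,\vxi) \leq 0 \mid \nX) = \bbE_{\qnv}[\In_{(-\infty,0]}(g(\x^*,\vxi))]$ converges in $P_0^n$-probability to $1$, so it exceeds $\beta$ with probability tending to $1$; on that event $\x^*$ is (VBJCCP)-feasible, giving $V^*_{VB}(\nX) \leq \bbE_{\qnv}[f(\x^*,\vxi)]$. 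Lemma~\ref{lem:of}(1) applied at the fixed point $\x^*$ then yields $\bbE_{\qnv}[f(\x^*,\vxi)] \to f(\x^*,\vxi_0) = V^*$ in probability, which delivers $V^*_{VB}(\nX) \leq V^* + \eta$ with $P_0^n$-probability approaching $1$.

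For the lower bound, let $\hat\x_n$ be a minimizer of (VBJCCP); by compactness I may restrict attention to a convergent subsequence. The feasibility condition reads $Q^*(g(\hat\x_n,\vxi)\leq 0 \mid \nX) \geq \beta$, and by the \emph{uniform-in-$\x$} convergence in Lemma~\ref{lem:pw}, choosing any $\delta \in (0,\beta)$ gives $\In_{(-\infty,0]}(g(\hat\x_n,\vxi_0)) \geq \beta - \delta > 0$ with $P_0^n$-probability tending to $1$. Since this indicator takes only the values $0$ and $1$, it must in fact equal $1$ on that event, forcing $g(\hat\x_n,\vxi_0)\leq 0$ and hence $f(\hat\x_n,\vxi_0) \geq V^*$. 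Combining this with the uniform convergence from Lemma~\ref{lem:of}(2) gives $V^*_{VB}(\nX) = \bbE_{\qnv}[f(\hat\x_n,\vxi)] \geq f(\hat\x_n,\vxi_0) - \eta \geq V^* - \eta$ on an event of probability approaching $1$.

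The step requiring the most care is the binary-indicator argument that upgrades the soft feasibility $Q^*(\cdot\mid\nX) \geq \beta$ to the hard feasibility $g(\hat\x_n,\vxi_0) \leq 0$; this is precisely where uniformity in $\x$ in Lemma~\ref{lem:pw} is essential, since $\hat\x_n$ is random and data-dependent, and also where strict positivity of $\beta$ matters. The remaining bookkeeping — intersecting the finitely many "high-probability" events and keeping the slack parameters $\delta$ and $\eta$ independent — is standard and handled by a union bound, after which sending $\eta \downarrow 0$ along a countable sequence completes the convergence in probability.
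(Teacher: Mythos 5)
Your proposal is correct and follows essentially the same route as the paper's own proof: the upper bound via showing the true optimizer $\x^*$ is (VBJCCP)-feasible with probability tending to one (Lemma~\ref{lem:pw} pointwise, then Lemma~\ref{lem:of}(1)), and the lower bound via the binary-indicator argument that upgrades $Q^*(\cdot|\nX)\geq\beta$ to hard feasibility $g(\hat\x_n,\vxi_0)\leq 0$ for the data-dependent minimizers (Lemma~\ref{lem:pw} uniformly, then Lemma~\ref{lem:of}(2)). Your version is in fact marginally cleaner in two spots — you apply Lemma~\ref{lem:pw} directly at $\x^*$ rather than through an approximating sequence $\x_k\to\x^*$, and you correctly insist on $\delta<\beta$ strictly where the paper loosely writes $\eta\leq\beta$ — but these are refinements of the same argument, not a different one.
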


  \begin{proof}%[Proof of Theorem~\ref{thm:1}]
    Recall $\mathcal S^*_{VB}(\nX) $ is the solution set of (VBJCCP) and $\mathcal S^* $ is the solution set of (TP). Observe that since both $Q^* \left(g(\x,\vxi) \leq 0  |\nX \right)$ and $\In_{(-\infty,0]}(g(\x,\vxi_0))$ are upper-semicontinuous, their corresponding super-level sets are closed and, since $\cX$ is compact, the corresponding feasible sets are compact. Furthermore, if the corresponding feasible sets are non-empty then the corresponding optimal sets $\mathcal S^*_{VB}(\nX)$ and $\mathcal S^*$ are also non-empty.
    
    Next fix a point $\x^*$ in the true solution set $\mathcal S^*$ of (TP). 
    \remove{Since $\cX$ is compact, for any $\e>0$, there is $\x \in \cX$ such that $\|\x-\x^*\|<\e$ and $g(\x, \vxi_0)\leq 0$. It follows that there exists a sequence $\{\x_k\}\subset \cX$  such that $\x_k \to \x^*$ as $k \to \infty$ and $g(\x_k, \vxi_0)\leq 0$ for all $k\geq 1$.
    Now fix $\x\in\cX $ such that} \rev{Note that} $g(\x\rev{^*}, \vxi_0)\leq 0$. By Lemma~\ref{lem:pw}, $Q^* \left(g(\x\rev{^*},\vxi) \leq 0  |\nX \right) \overset{P_0^n}{\to} $  $\In_{(-\infty,0]}(g(\x\rev{^*},\vxi_0))$ as $n\to \infty$, and therefore there exists an $n_0$ depending on $\e>0$  such that for all $n\geq n_0$ and any $\eta>0$, we have for a given confidence level $\beta\in(0,1)$,
    \begin{align*}
      P^n_0\Big( Q^* \left(g(\x\rev{^*},\vxi) \leq 0  |\nX \right)& \geq  \beta \Big) \geq P^n_0\left( Q^* \left(g(\x\rev{^*},\vxi) \leq 0  |\nX \right) \geq  1 \right) 
      \\
        \geq P^n_0&\left(   \In_{(-\infty,0]}(g(\x\rev{^*},\vxi_0)) - Q^* \left(g(\x\rev{^*},\vxi) \leq 0  |\nX \right) \leq   0 \right) 
      \\
       \geq P^n_0&\left(   \In_{(-\infty,0]}(g(\x\rev{^*},\vxi_0)) - Q^* \left(g(\x\rev{^*},\vxi) \leq 0  |\nX \right) \leq   -\eta \right) \geq 1-\e.
      %\geq \beta
    \end{align*}
    %Since $Q^* \left(g(\x,\vxi) \leq 0  |\nX \right) $ is upper-semicontinuous the solution set $\mathcal S_{VB}^*$ is non-empty for large enough  $n$. 
    Hence for all $n\geq n_0$, $\x\rev{^*}$ is a feasible solution  of (VBJCCP) with $P_0^n$-probability of at least $1-\e$, and therefore 
    \begin{align}
      P^n_0\left( \bbE_{\qnv}[f(\x\rev{^*},\vxi)]  \geq V_{VB}^*(\nX)\right) \geq P^n_0\left( Q^* \left(g(\x\rev{^*},\vxi) \leq 0  |\nX \right) \geq  \beta \right) \geq 1-\epsilon.
      \label{eq:p5}
    \end{align} 
    \remove{Now, since $\x$ can be chosen arbitrarily close to  $\x^*$, it follows from the  equation above and the bounded convergence theorem that}
    \begin{align*}
        \remove{P^n_0\left( \bbE_{\qnv}[f(\x^*,\vxi)]  \geq V_{VB}^*(\nX)\right) \geq 1-\epsilon }
    \end{align*}
    \remove{for all $n\geq n_0$.}
     For any $\delta>0$ observe that
    \begin{align*}
      P^n_0&\left( V^*_{VB}(\nX) - f(\x^*,\vxi_0) > \delta  \right)  
      \\&= P^n_0\left( V^*_{VB}(\nX)-\bbE_{\qnv}[f(\x^*,\vxi)] + \bbE_{\qnv}[f(\x^*,\vxi)]  -f(\x^*,\vxi_0) > \delta\right) 
      \\
      &\leq   P^n_0\left( V^*_{VB}(\nX)-  \bbE_{\qnv}[f(\x^*,\vxi)]   > \delta/2\right)
      \\
      &\quad +P^n_0\left( \bbE_{\qnv}[f(\x^*,\vxi)] -f(\x^*,\vxi_0) > \delta/2\right)
      \\
      &\leq   P^n_0\left(  V^*_{VB}(\nX)- \bbE_{\qnv}[f(\x^*,\vxi)]   > 0\right)
      \\
      & \quad+  P^n_0\left( \bbE_{\qnv}[f(\x^*,\vxi)]-f(\x^*,\vxi_0)  > \delta/2\right).
    \end{align*}
    By Lemma~\ref{lem:of}(1), for every $\x \in \cX$,
        \(  \bbE_{\qnv}[f(\x,\vxi)]  \overset{P_0^n}{\to} f(\x, \vxi_0)~ \text{as} ~n
        \to \infty\). Therefore it follows from the inequality above and ~\eqref{eq:p5} that
    \begin{align}
        \lim_{n\to \infty} P^n_0&\left( V^*_{VB}(\nX) - f(\x^*,\vxi_0) > \delta  \right)  =\lim_{n\to \infty} P^n_0\left( V^*_{VB}(\nX) - V^* > \delta  \right) = 0.
        \label{eq:eqF11}
    \end{align}
    
    We are left to show that $\lim_{n\to \infty} P^n_0\left( V^*_{VB}(\nX) - f(\x^*,\vxi_0) < - \delta  \right)  =\lim_{n\to \infty} P^n_0\left( V^*_{VB}(\nX) - V^* < -\delta  \right) = 0$ for any $\delta>0$. Let $\hat\x_n \in \mathcal{S}_{VB}^*$; that is $Q^* \left(g(\hat\x_n,\vxi) \leq 0  |\nX \right) \geq \beta $ and $V_{VB}^*(\nX) = \bbE_{\qnv}[f(\hat\x_n,\vxi)]$. Since $\cX$ is compact, we assume that as $n\to\infty$ $\hat\x_n \to \x_0 \in \cX$ (the limit point of the sequence $\{\hat\x_n\} \subseteq \cX$).
    
     Recall that Lemma~\ref{lem:pw} holds uniformly over all $\x \in \cX$. Therefore using the fact that $Q^* \left(g(\hat\x_n,\vxi) \leq 0  |\mathbf X_n \right) -  \In_{(-\infty,0]}(g(\hat \x_n,\vxi_0)) \leq | Q^* \left(g(\hat\x_n,\vxi) \leq 0  |\mathbf X_n \right) -  \In_{(-\infty,0]}(g(\hat \x_n,\vxi_0))| \leq \sup_{\x\in \cX}|Q^* \left(g(\x,\vxi) \leq 0  |\mathbf X_n \right) -  \In_{(-\infty,0]}(g( \x,\vxi_0))|$, we have for any $\eta>0$, %\harsha{Don't think I follow this step} %(~\textcolor{red}{(yet to prove the finite sample bound)})
    %\begin{align}
      \[  \lim_{n\to\infty } P_0^n\left[  Q^* \left(g(\hat\x_n,\vxi) \leq 0  |\mathbf X_n \right) \leq  \In_{(-\infty,0]}(g(\hat \x_n,\vxi_0)) +\eta \right]= 1 .\]
   % \end{align} 
    Next using the fact that $Q^* \left(g(\hat\x_n,\vxi) \leq 0  |\nX \right) \geq \beta $ for every $n\geq 1$, it follows %from~\eqref{eq:p7} \harsha{Forward reference!} 
    that $\hat \x_n$ is a feasible point of (TP) for $\eta\leq \beta$; that is,
    %\begin{align}
        \(\left\{\remove{\x \in \mathcal{X} : }Q^* \left(g(\hat\x_n,\vxi) \leq 0  |\mathbf X_n \right) \leq  \In_{(-\infty,0]}(g(\hat\x_n,\vxi_0)) +\eta\right\} \subset \{\remove{\x \in \mathcal{X} :}  \In_{(-\infty,0]}(g(\hat\x_n,\vxi_0)) +\eta \geq \beta \}.\)
    %\end{align}
    %Now choosing $\delta$ such that $\delta\leq \beta$, 
    Therefore, it follows that 
    \begin{align*}
      %  \nonumber
       \left\{Q^* \left(g(\hat\x_n,\vxi) \leq 0  |\mathbf X_n \right) \leq  \In_{(-\infty,0]}(g(\hat\x_n,\vxi_0)) +\eta\right\} \subseteq \left\{ \In_{(-\infty,0]}(g(\hat\x_n,\vxi_0)) +\eta \geq \beta \right\}
        \\
        \subseteq \left\{ f(\hat\x_n,\vxi_0)\geq V^* \right\},
    \end{align*}
    since the penultimate  condition implies that $\hat\x_n$ is a feasible point of (TP). Therefore, for any $\eta\leq \beta$,
    %\begin{align}
      \[  \lim_{n\to\infty} P_0^n\left[ f(\hat\x_n,\vxi_0)\leq V^* \right] = 0.\]
    %\end{align} 
    %\limsup_{n \to \infty} Q^* \left(g(\hat\x_n,\vxi) \leq 0  |\nX \right) \geq \beta$ implies $\In_{(-\infty,0]}(g(\x^*,\vxi_0))\geq \beta$ and $\beta\in (0,1)$. Therefore, it follows that
    %$f(\x^*,\vxi_0)\geq V^*$. 
    Using the fact that $f(\hat\x_n,\vxi_0)-\bbE_{\qnv}[f(\hat\x_n,\vxi)] \leq | \bbE_{\qnv}[f(\hat\x_n,\vxi)] - f(\hat\x_n,\vxi_0)| \leq \sup_{\x\in \cX}|\bbE_{\qnv}[f(\x,\vxi)]-f(\x,\vxi_0)|$ and $\bbE_{\qnv}[f(\hat\x_n,\vxi)]=V_{VB}^*(\mathbf X_n)$, for any $\delta>0$ Lemma~\ref{lem:of}(2) implies that %(~\textcolor{red}{(yet to prove the finite sample bound)})
    %\begin{align*}
     \[   
     %\lim_{n\to\infty} P_0^n\left[  \bbE_{\qnv}[f(\hat\x_n,\vxi)] +\delta \geq  f(\hat\x_n,\vxi_0)  \right] =
     \lim_{n\to\infty} P_0^n\big[  V_{VB}^*(\mathbf X_n) +\delta \leq  f(\hat\x_n,\vxi_0)  \big] = 0.\]
    %\end{align*} 
   %and therefore
   % \[
     %   \lim_{n\to\infty} P_0^n\left[  %V_{VB}^*(\nX)+\delta \leq  %f(\hat\x_n,\vxi_0)  \right] = 0.\]
    %\end{align*} 
    Observe that for any $\delta>0$ 
    %Lemma~\ref{lem:of}~(2), 
    \begin{align*}
        %\nonumber
        P_0^n &\left[ V^*-  V_{VB}^*(\mathbf X_n) \geq  \delta \right] 
        %&\leq P_0^n\left[ V^*-  \inf_{m\geq n} V_{VB}^*(\mathbf X_m)\geq  2\eta \right] 
        %\\
        % \nonumber
        % &=P_0^n\left[ V^*-f(\x_0,\vxi_0) + f(\x_0,\vxi_0) -  \inf_{m\geq n} V_{VB}^*(\mathbf X_m)\geq  2\eta \right] 
        % \\
        %\nonumber
        \\
        \nonumber
        &\leq P_0^n\left[ V^*-f(\hat\x_n,\vxi_0) \geq  \delta/2 \right] 
        +P_0^n\left[  f(\hat\x_n,\vxi_0) -   V_{VB}^*(\mathbf X_n)\geq  \delta/2 \right] 
        \\
        %\nonumber
        &\leq P_0^n\left[ V^*-f(\hat\x_n,\vxi_0) \geq  0  \right] 
        +P_0^n\left[  f(\hat\x_n,\vxi_0) -   V_{VB}^*(\mathbf X_n)\geq  \delta/2 \right] .
       % \\
        %&\leq \frac{M_1}{\delta} ( \e_n^2 + \eta_n^2 ) + \frac{M_2}{\eta} ( \e_n^2 + \eta_n^2 ) = \left[ \frac{M_1}{\delta}  + \frac{M_2}{\eta} \right] ( \e_n^2 + \eta_n^2 ),
    \end{align*} 
    Taking limit $n\to\infty$ on either side of the inequality above, we  have \begin{align}
        \lim_{n\to\infty} P_0^n&\left[ V^*-  V_{VB}^*(\mathbf X_n) \geq  \delta \right]  = 0.
        \label{eq:eqF12}
    \end{align}
    %where $\delta<\beta$.
    Combining equation~\eqref{eq:eqF11} and~\eqref{eq:eqF12}, we conclude that for any $\delta>0$,
    \(    \lim_{n\to\infty} P_0^n\left[ |V^*-  V_{VB}^*(\mathbf X_n) | \geq  \delta \right] =0.\)
 \end{proof}

Next, we state the corollary of the result above that guarantees asymptotic consistency of the optimal value $V^*_{B}(\nX)$ of (BJCCP) with a single constraint.
\begin{corollary}\label{corr:thm1}
    Under Assumptions~\ref{assume:prior},~\ref{assume:lan},~\ref{assume:Var}, and \ref{ass:Cath} and when $\cX$ is a compact set, we have $V^*_{B}(\nX) \stackrel{P_0^n}{\to}  V^*~\text{as}~n\to\infty$.
    %, where $\bbD( A, B) := \sup_{\x \in A} \inf_{\mathbf y \in B} \|\x-\mathbf y\|, $ is the distance between two sets $A$ and $B$. 
\end{corollary}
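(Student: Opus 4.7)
The plan is to derive this corollary as a near-immediate consequence of Theorem~\ref{thm:1}, by specializing the variational family $\mathcal{Q}$ to be the set of all probability distributions on $\Theta$ (absolutely continuous with respect to the prior). With this choice, the KL-minimization in~\eqref{eq:vb_opt} is solved uniquely by $q^*(\vxi\mid\nX) = \pi(\vxi\mid\nX)$, since the KL divergence is non-negative and vanishes precisely at the true posterior. Consequently the (VBJCCP) problem collapses to (BJCCP), and in particular $V^*_{VB}(\nX) = V^*_{B}(\nX)$. This is exactly the pattern used by the authors to deduce Corollary~\ref{corr:1} from Theorem~\ref{prop:2} and Corollary~\ref{corr:finiteB} from Theorem~\ref{thm:finite}.

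The remaining work is to confirm that the hypotheses of Theorem~\ref{thm:1} continue to hold under this enlarged $\mathcal{Q}$. Assumptions~\ref{assume:prior},~\ref{assume:lan}, and~\ref{ass:Cath} make no reference to the variational family, so they transfer unchanged. For Assumption~\ref{assume:Var}, part~(1) is immediate because the enlarged family trivially contains distributions absolutely continuous with respect to the prior. For parts~(2) and~(3), I would exhibit an explicit witness sequence, for instance $q_n(\vxi) = \mathcal{N}(\hat{\vxi}_n, n^{-1} I(\vxi_0)^{-1})$: such Gaussians lie in the enlarged family, have mean equal to the MLE $\hat{\vxi}_n$, converge weakly to $\delta_{\vxi_0}$ at rate $\sqrt{n}$ by the LAN structure of Assumption~\ref{assume:lan}, and the rescaled density $\check{d}_n$ (per Definition~\ref{def:rescale}) is itself a centered Gaussian with positive finite differential entropy.

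With all hypotheses of Theorem~\ref{thm:1} verified, that theorem yields $V^*_{VB}(\nX) \overset{P_0^n}{\to} V^*$ as $n \to \infty$, and the identification $V^*_{VB}(\nX) = V^*_{B}(\nX)$ completes the proof. There is essentially no obstacle: the corollary is in effect a restatement of Theorem~\ref{thm:1} in the limiting case where the variational family is unrestricted and the variational bias term $\eta_n^2$ vanishes. If desired, the proof can be written in a single sentence, entirely parallel to the proofs of Corollary~\ref{corr:1} and Corollary~\ref{corr:finiteB}, simply citing Theorem~\ref{thm:1} and noting that the VB approximator coincides with the true posterior when $\mathcal{Q}$ is taken to be all distributions on $\Theta$.
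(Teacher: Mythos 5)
Your proposal is correct and follows essentially the same route as the paper: the authors' proof is the one-line observation that $V^*_{VB}(\nX)$ coincides with $V^*_{B}(\nX)$ when $\mathcal{Q}$ is the set of all distributions on $\Theta$, so Theorem~\ref{thm:1} applies directly. Your additional verification that Assumption~\ref{assume:Var} still holds for the enlarged family (via an explicit Gaussian witness sequence) is a careful touch the paper leaves implicit, but it does not change the argument.
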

\begin{proof}
The proof follows straightforwardly from Theorem~\ref{thm:1} and the fact that $V^*_{VB}(\nX)$ is the same as $V^*_{B}(\nX)$ when  the variational family $\mathcal{Q}$ is fixed to the set of all possible distributions on $\Theta$.
\end{proof}
%\vspace{-0.3em}

%In the next section, we 
%\subsection{Finite sample guarantees on the optimal values}

\section{Application}~\label{sec:App}
Data-driven chance constrained optimization problems abound throughout operations research, finance, engineering and the sciences. In this section we present an example application of Bayesian chance constrained optimization to solving a staffing problem in a queueing system.
%Second, we consider a chance-constrained optimization problem motivated by the classic Markowitz portfolio deisgn problem that was also considered by~\cite{} in the context of sample average approximate chance constrained optimization. 

\subsection{Optimal Staffing} 
%\harsha{Add the figures from the presentation to this section.}
Consider a situation where a decision maker (DM) has to decide the optimal number of servers in a multi-server $M/M/c$ queueing system, using arrival time and service time data. We assume that the rate parameters of the exponentially distributed inter-arrival and service time distributions, denoted as $\lambda$ and $\mu$  respectively, are unknown. Note that $\lambda$ and $\mu$, together constitute the system parameter $\xi = \{\lambda ,\mu\}$  and  the number of servers $c$ is the decision/input variable.
%In subsequent paragraphs we discuss the methodology adopted by the DM to decide the number  of extra servers required.
 The DM collects $n$ realizations of the random vector $\mathcal{V}:=\{T,S,E\}$, denoted  as $\nX :=\{\mathcal{V}_1,\ldots \mathcal{V}_n\}$ where $T$, $S$, and $E$ are the random variables denoting the arrival, service-start, and service-end time of each customer $i \in \{1,2,\ldots  n\}$ respectively. We also assume that 
 %there is no time lag between the two successive states \harsha{I don't get this} for any customer and 
 the inter-arrival and service times are independent, that is $T_i-T_{i-1}$ is independent of $E_i-S_{i}$ for each  $i \geq 1$. The joint likelihood of the arrival and departure times for $n$ customers is \(p^n_{\vxi}(\nX) := \prod_{i=1}^{n} \lambda e^{-\lambda(T_i-T_{i-1})} \mu e^{-\mu(E_i-S_{i})} .\)

%\textbf{Observations}
%\begin{itemize}
%	\item Let $T_i$ be the random variable denoting the time of arrival of the $i^{th}$ customer.
%	\item Let $S_i$ be the random variable denoting the time at which service starts for the $i^{th}$ customer.
%	\item Let $E_i$ be the random variable denoting the time at which service ends for the $i^{th}$ customer.
%	\item We also assume that there is no time lag between the two successive states for any customer.
%	\item Let the DM records the above data for $n$ number of customers, therefore the joint likelihood of the arrival and departure times for $n$ customers is
%	\[P(Z_n|\lambda,\mu) = \prod_{i=1}^{i=n} \lambda e^{-\lambda(T_i-T_{i-1})} \mu e^{-\mu(E_i-S_{i})} .\]
%\end{itemize}
\textbf{Constraint functions:} The DM chooses the number of servers $c$ to maintain a constant measure of congestion. Congestion is usually measured as $1-W_q(c,\lambda,\mu)$, where $W_q(c,\lambda,\mu)$ is the steady-state probability that the customer did not wait in the queue. A closed-form expression for $1-W_q(c,\lambda,\mu)$ for an $M/M/c$ queue is known to be 
\( 1-W_q(c,\lambda,\mu) = \frac{r^c}{c!(1-\rho)} \Big/ \left(\frac{r^c}{c!(1-\rho)} + \sum_{t=0}^{c-1}\frac{r^t}{t!} \right), \)
where $r= \frac{\lambda}{\mu} \text{ and } \rho =\frac{r}{c}$ with $\rho<1$ (see~\cite{Gross2008}) . $\rho $ is also known as \textit{traffic intensity} and $\rho < 1$ is a necessary and sufficient condition for an $M/M/c$ queue to be in steady-state (or stable). 

The DM fixes $\alpha$, the desired maximum fraction of customers delayed in the queue and the smallest $c$ is chosen that satisfies
\(  ( \alpha - \{1-W_q(c,\lambda,\mu)\} ) > 0 \text{ and } (c \mu - \lambda) > 0.   \)
Referring to the queueing literature,  we will use the term the quality of service\textsc{(QoS)} constraint for the first constraint. In fact, the QoS constraint is only valid when $\rho<1$.  The corresponding constraint optimization problem is
\begin{align*}\tag{TP-Q}
\text{minimize} ~ c, 
\text{ subject to}~&~(\alpha - \{1-W_q(c,\xi)\} ) > 0 \textsc{(QoS)} \text{ and }
(c \mu - \lambda) > 0.
\end{align*}
This so-called staffing problem and its variants are well studied in the queueing literature. As noted before, we are interested in the data-driven setting where the parameters of the problem are unknown. This data-driven staffing problem has been considered as well and the interested reader may referred to~\cite{Gans2003} and~\cite{Aksin2009}. 

Next, we fix a non-conjugate inverse Gamma ($\text{Inv}-\Gamma(\cdot)$) distribution prior on both $\lambda$ and $\mu$, that is $d\Pi(\lambda,\mu)=\text{Inv}-\Gamma_{\lambda}(\lambda;\alpha_q,\beta_q)\text{Inv}-\Gamma_{\mu}(\mu;\alpha_s,\beta_s) d \lambda d \mu$. In our experiments, we fix $\alpha_q=\alpha_s=1$ and  $\beta_q=\beta_s=1$.  %\harsha{Are they independent? How do you define the joint distribution?}. 
We fix the variational family $\mathcal{Q}= \big\{ q(\lambda,\mu) :  q(\lambda,\mu;a_q, b_q,a_s, b_s) 
    = \Gamma(\lambda;a_q,b_q) \Gamma(\mu;a_s,b_s) \big\}$, where $\Gamma(\cdot;a_{(\cdot)},b_{(\cdot)})$ denotes the Gamma distribution with rate $b_{(\cdot)}$ and shape $a_{(\cdot)}$. In the simulation experiment, we fix $\lambda_0=16$ and $\mu_0=1$ and generate $2000$ samples of service and inter-arrival times. We then  solve the (VBJCCP) \remove{for 250 sample paths} and denote its solution as $C^*_{VB}$. We also solve the corresponding (BJCCP) using a sample average approximation (SAA) of the chance constrained problem, by generating samples from the posterior distribution using MCMC. We denote the optimal staffing level computed using  MCMC as $C^*_{MCMC}$. \rev{We repeat the experiment over 250 sample paths of service and inter-arrival times and compute respective $C^*_{VB}$ and $C^*_{MCMC}$. }
    
    The results of this simulation experiment are summarized in Figure~\ref{fig:VBvsMCMC}.
\begin{figure}[ht]
    \centering
         \begin{subfigure}[b]{0.45\textwidth} \includegraphics[trim={0 0.2cm 0 0},clip,width=0.85\textwidth,height=0.8\textwidth]{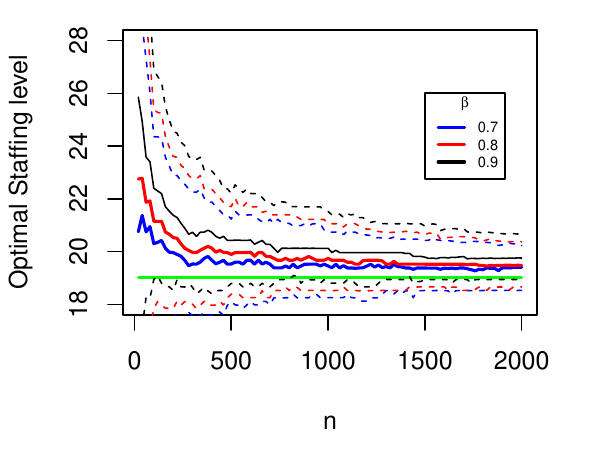}
        \caption{$C_{VB}^*$}
        \end{subfigure}
                % \begin{subfigure}[b]{0.32\linewidth}
                %     \centering
                %     \includegraphics[width=1\linewidth]{graphics/Phi161}
                %     \caption{$\phi(C_{VB}^*)$}
                % \end{subfigure}
        \begin{subfigure}[b]{0.45\textwidth}
        \centering
         \includegraphics[trim={0 0.2cm 0 0},clip,width=0.85\textwidth,height=0.8\textwidth]{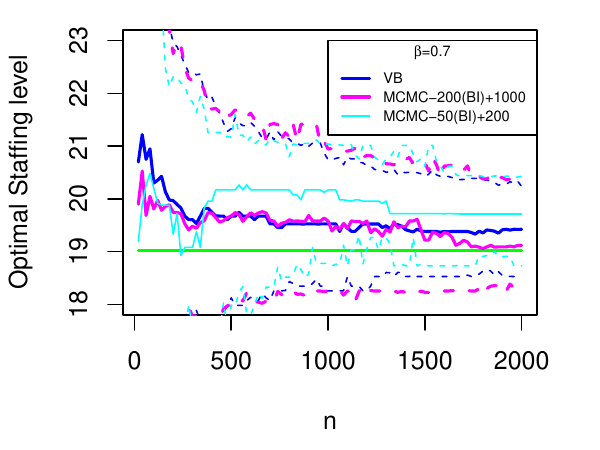}
        \caption{$C_{VB}^*$ vs. $C_{MCMC}^*$}
        \end{subfigure}
        \caption{$\lambda_0=16, \mu_0=1$, (a) Optimal Staffing Level ($5^{th}$, $50^{th}$, and $95^{th}$ quantile over 250 sample paths) for $\beta=\{0.7,0.8,0.9\}$ %\\ (b) $\phi(C_{VB}^*)$ using VB approximation against the number of samples ($n$)\\
                    (b) $C_{VB}^*$ vs. $C_{MCMC}^*$ -Optimal Staffing Level ($5^{th}$, $50^{th}$, and $95^{th}$ quantile over 250 sample paths) against the number of samples ($n$)
                    , green line is the solution of (TP-Q) at $\{\lambda_0\,\mu_0\}$.}
                    ~\label{fig:VBvsMCMC}
        \end{figure}
%In  particular, if we replace $c!$ by $\Gamma(c+1)$ in the QOS constraint then it is continuous in $c$ for almost every $\vxi\in\Theta$.
We observe in Figure~\ref{fig:VBvsMCMC}(a) that $C^*_{VB}$ is consistent and moreover, for larger confidence level $\beta$, $C^*_{VB}$ is more conservative (i.e., the optimal number of servers is larger) as expected. In Figure~\ref{fig:VBvsMCMC}(b), we compare $C^*_{VB} $ and $C^*_{MCMC}$ for $\beta=0.7$. We compute $C^*_{MCMC}$ at each $n$ using two sequences of MCMC samples from the `true' posterior distribution generated using Metropolis–Hastings algorithm~\cite{Chib1995}: 1) 1000 samples with 200 burn-in (magenta) and 2) 200 samples with  50 burn-in (cyan).  
Observe that, as $n$ increases both $C^*_{VB}$ and $C^*_{MCMC}$ (magenta) converges to the true solution almost at the same rate and there is no significant difference between the two approaches. In fact, we will later show in~Theorem~\ref{thm:OSfinite} and~Corollary~\ref{corr:OSB} that the optimal staffing levels computed using the (VBJCCP) and (BJCCP) approaches converge at the same rate. Moreover, the average computation time taken by the VB and MCMC (magenta) approaches to compute an optimal staffing level at a given $n$ are of the same order (30 seconds (average) on Sky Lake CPU @ 2.60GHz). Unsurprisingly, the computation time in an MCMC approach can be reduced by reducing the number of samples;  however, it may result in computing a suboptimal solution. We observe that computing $C^*_{MCMC}$~(cyan) is faster (8 seconds (on average) on Sky Lake CPU @ 2.60GHz) but suboptimal. %However, MCMC approach with less number of samples (cyan) is faster but produces suboptimal result. %Moreover, $C^*_{MCMC}$ with $250$ samples ($50$ burn-in) produces sub-optimal result than the one with $700$ samples ($200$ burn-in). The observation in Figure~\ref{fig:VBvsMCMC}(b) supports our claim that sampling-based approaches may result in a suboptimal solution due to their inability to preserve the convexity of a convex feasible set (as demonstrated in Example~\ref{ex:Gaussian}).  %result in a non-convex feasible set and    \harsha{Very poorly written. Also, there are 3 curves in 2b and you don't even describe those. What's the point of the graph then? It will irritate a reviewer to see this.}

%Next wee verify the conditions required fo
 Next, we verify the conditions on the prior, the likelihood model and the variational family to compute the convergence rate of $C^*_{VB}$. First note that the risk function  $f(c,\vxi)=c$ in the optimal staffing problem, therefore $L^2_n(\vxi,\vxi_0)$  is $0$. Hence, Lemma~\ref{lem:VBcons} is trivially true even without existence of tests conditions (Assumption~\ref{assump:Asf1}) defined using $L_n(\vxi,\vxi_0)= L^2_n(\vxi,\vxi_0)$. Next, we consider $L_n^1(\vxi,\vxi_0)$ and $L_n^i(\vxi,\vxi_0)$ for $i\in\{1,\ldots,m\}$ and recall Proposition~\ref{prop:tests}. We satisfy the conditions of Proposition~\ref{prop:tests} in  the  following result so that these distance functions satisfy Assumption~\ref{assump:Asf1}.

% %The following result verifies conditions required to establish 
\begin{lemma}~\label{lem:OStests}
  For the sequence of tests $$\phi'_{n,\e} = \In_{ \left\{ \nX:  \left|\frac{n}{\sum_{i=1}^{n}{T_i-T_{i-1}}} - \lambda_0 \right| >  \lambda_0 \sqrt{\frac{n+2}{(n-2)^2}} e^{Cn\e^2} \right\} \cap \left\{ \nX:  \left|\frac{n}{\sum_{i=1}^{n}{E_i-S_i}} - \mu_0 \right| >  \mu_0 \sqrt{\frac{n+2}{(n-2)^2}} e^{Cn\e^2} \right\} },$$ %for the exponentially distributed inter-arrival times ($\{T_i-T_{i-1}\}$) and service time ($\{E_i-S_i\}$) with rate $\lambda$ and $\mu$ respectively, 
  it can be shown that
  \(\bbE_{P^{n}_0}[ \phi'_{n,\e} ] \leq e^{-{K} n\e^2}, \)
  for $C=K/2$.
\end{lemma}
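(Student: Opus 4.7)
The plan is to exploit the fact that $\phi'_{n,\e}$ is the indicator of the \emph{intersection} of two events, so its expectation is dominated by the probability of either event alone. First, I would write
\[
    \bbE_{P^n_0}[\phi'_{n,\e}] \;\leq\; P^n_0\!\left(\left|\hat\lambda_n - \lambda_0\right| > \lambda_0 \sqrt{\frac{n+2}{(n-2)^2}}\, e^{Cn\e^2}\right),
\]
where $\hat\lambda_n := n/\sum_{i=1}^n (T_i - T_{i-1})$ is the MLE of the arrival rate. The task therefore reduces to bounding a single marginal tail (an analogous bound for the service-rate component holds by symmetry, but is not needed).

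Next, I would use the exact sampling distribution of $\hat\lambda_n$ under $P^n_0$. Since the inter-arrival times are i.i.d.\ $\mathrm{Exp}(\lambda_0)$, the denominator $Y := \sum_{i=1}^n (T_i - T_{i-1})$ is $\mathrm{Gamma}(n,\lambda_0)$-distributed, so $\hat\lambda_n = n/Y$ follows a scaled inverse-Gamma law. The classical moment identities $\bbE[Y^{-1}] = \lambda_0/(n-1)$ and $\bbE[Y^{-2}] = \lambda_0^2/((n-1)(n-2))$, valid for $n > 2$, give, after routine algebra in $\bbE[\hat\lambda_n^2] - 2\lambda_0 \bbE[\hat\lambda_n] + \lambda_0^2$, the mean-squared-error identity
\[
    \bbE\!\left[(\hat\lambda_n - \lambda_0)^2\right] \;=\; \lambda_0^2 \cdot \frac{n+2}{(n-1)(n-2)} \;\leq\; \lambda_0^2 \cdot \frac{n+2}{(n-2)^2},
\]
where the final inequality uses $(n-1)(n-2) \geq (n-2)^2$ for $n > 2$. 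Crucially, the choice of threshold in the definition of $\phi'_{n,\e}$ has been engineered so that its square is exactly the MSE-derived bound multiplied by $e^{2Cn\e^2}$.

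Finally, I would apply Markov's inequality to the squared deviation at threshold $t^2 = \lambda_0^2 \frac{n+2}{(n-2)^2} e^{2Cn\e^2}$, yielding
\[
    P^n_0\!\left(|\hat\lambda_n - \lambda_0| > t\right) \;\leq\; \frac{\bbE[(\hat\lambda_n - \lambda_0)^2]}{t^2} \;\leq\; e^{-2Cn\e^2},
\]
and setting $K = 2C$ concludes the proof. The only step requiring any care is the inverse-Gamma MSE calculation (which relies on the second moment existing, i.e., $n > 2$); the exponential-tail bound itself is a one-line consequence of Markov's inequality, so there is no substantive obstacle.
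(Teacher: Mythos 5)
Your proposal is correct and follows essentially the same route as the paper: both reduce to a Chebyshev/Markov bound on the squared deviation of $\hat\lambda_n=n/\sum_i(T_i-T_{i-1})$ from $\lambda_0$, compute the mean-squared error $\lambda_0^2\,\tfrac{n+2}{(n-1)(n-2)}$ from the inverse-Gamma moments, and observe that the threshold is engineered so the ratio is at most $e^{-2Cn\e^2}=e^{-Kn\e^2}$ with $C=K/2$. The only cosmetic difference is that you discard the second (service-time) event via $P(A\cap B)\le P(A)$, while the paper first factorizes the probability using independence and then bounds one factor; the substance is identical.
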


We assume that $\Theta_n(\e)=\Theta=(0,\infty)^2$. Observe that Assumption~\ref{assump:Asf2} is trivially satisfied by the product of Inverse Gamma priors on $\lambda$ and $\mu$.
Next, we show that the prior and the likelihood model satisfy Assumption~\ref{assump:Asf3}. 
\begin{lemma}\label{lem:nv3}
	Fix $ n_2\geq 2$ and any $\rho >1$. Let   		\(A_n := \left\{ \vxi \in \Theta :D_{1+\rho} \left( P_0^n \| P_{\vxi}^n \right) \leq C_3 n \e_n^2 \right\}\), 	where $D_{1+\rho} \left( P_0^n \| P_{\vxi}^n \right)$ is the R\'enyi divergence between $P_0^n$ and  $P_{\vxi}^n$.
    % assuming $P_0^n$ is absolutely continuous with respect to $P_{\vxi}^n$. 
    Then for $\e_n^2=\frac{\log n}{n}$ the prior satisfies 
	\vspace{0em}
	%\begin{align*}
		$\Pi \{A_n\}  \geq \exp(-n C_2 \e_n^2) , \forall n\geq n_2$,
	%\end{align*} 
	with $C_3 > 4 \max\{\alpha_s^{-1}, \alpha_q^{-1}\}$ and $C_2 = 0.5(\alpha_s + \alpha_q) C_3$.
\end{lemma}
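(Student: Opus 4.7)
The plan is to exploit the product structure of the M/M/c likelihood and the independence of inter-arrival and service times to reduce the bivariate R\'enyi divergence bound to two univariate problems, then Taylor-expand each around the true parameter and lower-bound the inverse gamma prior mass on a small product ball around $(\lambda_0,\mu_0)$.

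First I would use independence of inter-arrivals and services to factor
\[
D_{1+\rho}(P_0^n \| P_{\vxi}^n) = n\,D_{1+\rho}(\mathrm{Exp}(\lambda_0)\|\mathrm{Exp}(\lambda)) + n\,D_{1+\rho}(\mathrm{Exp}(\mu_0)\|\mathrm{Exp}(\mu)),
\]
and substitute the explicit closed form for exponentials,
\[
D_{1+\rho}(\mathrm{Exp}(\lambda_0)\|\mathrm{Exp}(\lambda)) = -\log(\lambda/\lambda_0) - \rho^{-1}\log\bigl(1+\rho - \rho\lambda/\lambda_0\bigr),
\]
which is finite whenever $\lambda < (1+\rho)\lambda_0/\rho$; the same formula applies to $\mu$.

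Next I would Taylor-expand each univariate R\'enyi divergence around the true parameter. The first derivative vanishes at $\lambda_0$ and the second derivative equals $(1+\rho)/\lambda_0^2$, so on a neighborhood $|\lambda-\lambda_0|/\lambda_0 \leq c$ with $c<1/\rho$ one obtains a quadratic upper bound of the form $D_{1+\rho}(\mathrm{Exp}(\lambda_0)\|\mathrm{Exp}(\lambda)) \leq K_\rho (\lambda-\lambda_0)^2/\lambda_0^2$, and analogously for $\mu$. Then I would define the product ball $A_n := \{|\lambda-\lambda_0| \leq r_n^{(\lambda)}\}\cap\{|\mu-\mu_0| \leq r_n^{(\mu)}\}$ with radii $r_n^{(\lambda)}, r_n^{(\mu)}$ of order $\sqrt{\log n/n}$, calibrated so that on $A_n$ the quadratic bounds give $D_{1+\rho}(P_0^n\|P_{\vxi}^n) \leq C_3\log n = C_3 n\e_n^2$. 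The hypothesis $C_3 > 4\max\{\alpha_s^{-1},\alpha_q^{-1}\}$ should provide the slack needed to keep $A_n$ strictly inside the valid Taylor region and to absorb the constant $K_\rho$.

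Finally I would lower-bound the prior mass. Because the inverse gamma densities $\pi_\lambda(\lambda) \propto \lambda^{-\alpha_q-1}e^{-\beta_q/\lambda}$ and $\pi_\mu(\mu) \propto \mu^{-\alpha_s-1}e^{-\beta_s/\mu}$ are continuous and strictly positive at $\lambda_0,\mu_0$, and $r_n^{(\lambda)}, r_n^{(\mu)} \to 0$, for all $n\geq n_2$ large enough
\[
\Pi(A_n) \geq c_0\, r_n^{(\lambda)} r_n^{(\mu)} \gtrsim \log n/n,
\]
so $\log \Pi(A_n) \geq -\log n + O(\log\log n) \geq -C_2 \log n = -C_2 n\e_n^2$ for any $C_2 \geq 1$. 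The specific form $C_2 = \tfrac{1}{2}(\alpha_s+\alpha_q)C_3$ stated in the lemma emerges from a more careful non-asymptotic evaluation in which the shape parameters appear via the logarithm of the inverse gamma densities at the boundary of $A_n$.

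The main obstacle is to carry the exact constants through every step: one must pin down the Taylor remainder in an $\rho$-explicit way on the chosen neighborhood, and then upgrade the asymptotic lower bound on $\Pi(A_n)$ to a non-asymptotic one whose constants match $C_2 = \tfrac{1}{2}(\alpha_s+\alpha_q)C_3$ with $C_3 > 4\max\{\alpha_s^{-1},\alpha_q^{-1}\}$. Conceptually each step is routine, but this bookkeeping is where the lemma's specific relation between $C_2$, $C_3$, $\alpha_s$ and $\alpha_q$ is earned.
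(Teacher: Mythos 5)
Your proposal is correct in substance but takes a genuinely different route from the paper's proof. You share the first step (factoring the R\'enyi divergence over the independent arrival and service streams and inserting the closed form for exponentials), but from there you Taylor-expand the finite-order divergence around $\lambda_0$ (the expansion is right: the first derivative vanishes and the second is $(1+\rho)/\lambda_0^2$), obtain a quadratic sublevel set of radius $\Theta(\sqrt{\log n/n})$, and invoke continuity and positivity of the inverse-gamma density to get $\Pi(A_n)\gtrsim \log n/n \geq n^{-C_2}$ for any $C_2\geq 1$. The paper instead uses monotonicity of the R\'enyi divergence in its order to lower-bound the event by the $D_\infty$ sublevel set, which is the explicit interval $\lambda_0 e^{-0.5C_3\epsilon_n^2}\leq\lambda\leq\lambda_0$ of width $O(\log n/n)$, and then evaluates its inverse-gamma mass exactly via incomplete Gamma functions. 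Your route is more elementary and actually delivers a stronger conclusion (a much larger set inside $A_n$, so any $C_2>1$ suffices asymptotically, and the stated $C_2=0.5(\alpha_s+\alpha_q)C_3>4$ follows a fortiori); the paper's route buys explicit non-asymptotic constants. One correction to your closing remarks: in the paper the condition $C_3>4\max\{\alpha_s^{-1},\alpha_q^{-1}\}$ does not come from keeping $A_n$ inside a Taylor region or absorbing $K_\rho$ (your argument needs no such condition); it arises from the elementary inequality $1-e^{-0.5\alpha_q C_3\epsilon_n^2}\geq e^{-0.5\alpha_q C_3 n\epsilon_n^2}$ used to close the incomplete-Gamma estimate for all $n\geq 2$ when $\epsilon_n^2=\log n/n$. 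Likewise $C_2=0.5(\alpha_s+\alpha_q)C_3$ is simply the product of the two per-coordinate exponents $e^{-0.5\alpha_q C_3 n\epsilon_n^2}$ and $e^{-0.5\alpha_s C_3 n\epsilon_n^2}$, not a boundary evaluation of the prior densities. Since both proofs (including the paper's) only establish the bound for all sufficiently large $n$, your argument is on equal footing and would serve as a valid, simpler alternative.
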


The results above verify the  conditions required to establish the convergence  rate  of the optimal staffing level computed using (VBJCCP). However, to explicitly quantify the rate of convergence, we also need to identify a bound on $\eta_n^2$ using Proposition~\ref{prop:eta_n}. Therefore, in the next result, we identify a sequence of distribution in $\mathcal{Q}$ that satisfies Assumption~\ref{assump:Asf11} required for Proposition~\ref{prop:eta_n} to hold.
\begin{lemma}\label{lem:nv6}
	Let $\{Q_n(\lambda,\mu)\}$ be a sequence of distributions defined as $\Gamma(\lambda;n,n/\lambda_0) \Gamma(\mu;n,n/\mu_0)$, then %for prior distribution $\Pi(A,B)=\text{Inv}-\Gamma_{\lambda}(A;\alpha_q,\beta_q)\text{Inv}-\Gamma_{\mu}(B;\alpha_s,\beta_s)$ and exponentially distributed inter-arrival and service times likelihood models with rate $\lambda$ and $\mu$ respectively, we have
	\( \frac{1}{n} \left[ \scKL\left(Q_n(\lambda,\mu)\|\Pi(\theta) \right) + \bbE_{Q_n(\theta)} \left[ \scKL\left(dP^n_{0}(\nX))\| dP^n_{\theta}(\nX) \right) \right]  \right] \leq C_9 \e_n'^2,  \)
where $\e_n'^2= \frac{\log n }{n}$ and $C_9 = 1 + \max \left(0,  2+  \frac{2\beta_q}{\lambda_0}  - \log \sqrt{2\pi }    - \log \left( \frac{{\beta_q}^{\alpha_q}}{\Gamma(\alpha_q)} \right)  + \alpha_q\log \lambda_0 \right) + \max \bigg(0,  2+  \frac{2\beta_s}{\mu_0} $
$- \log \sqrt{2\pi }    - \log \left( \frac{\beta_s^{\alpha_s}}{\Gamma(\alpha_s)} \right)  + \alpha_s\log \mu_0 \bigg)$ and the parameters of the prior distribution are such that $C_9>0$.
	\end{lemma}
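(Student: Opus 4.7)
The plan is to decompose the quantity on the left into a prior/variational piece and a likelihood piece, exploit the product structure of the prior, variational family, and likelihood to reduce to one-dimensional Gamma-vs-Inverse-Gamma KL computations plus an expected KL of exponentials, and then control each scalar quantity via standard Gamma moment formulas together with Stirling's approximation.

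First I would write $\scKL(Q_n\|\Pi) = \scKL\!\left(\Gamma(\lambda;n,n/\lambda_0)\,\|\,\text{Inv-}\Gamma(\lambda;\alpha_q,\beta_q)\right) + \scKL\!\left(\Gamma(\mu;n,n/\mu_0)\,\|\,\text{Inv-}\Gamma(\mu;\alpha_s,\beta_s)\right)$, since both the variational family and the prior factor across $(\lambda,\mu)$. Next, because $\nX$ is i.i.d.\ under the postulated model, $\scKL(dP_0^n\|dP_\theta^n) = n\scKL(dP_0\|dP_\theta)$, and since the per-sample likelihood factors into an exponential in inter-arrival time and an exponential in service time, the per-sample KL splits into $\scKL(\text{Exp}(\lambda_0)\|\text{Exp}(\lambda)) + \scKL(\text{Exp}(\mu_0)\|\text{Exp}(\mu)) = [\log(\lambda_0/\lambda) + \lambda/\lambda_0 - 1] + [\log(\mu_0/\mu) + \mu/\mu_0 - 1]$. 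Thus I only need to handle a single variable; the $\mu$-bound will be obtained by substituting $(\mu_0,\alpha_s,\beta_s)$ for $(\lambda_0,\alpha_q,\beta_q)$.

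For the likelihood term, I would use the Gamma moments under $Q_n(\lambda)=\Gamma(\lambda;n,n/\lambda_0)$: $\bbE_{Q_n}[\lambda] = \lambda_0$ and $\bbE_{Q_n}[\log\lambda] = \psi(n) - \log(n/\lambda_0)$. The $\lambda/\lambda_0$ piece then contributes exactly $1$, cancelling the $-1$, and the $\log$ piece reduces to $\log n - \psi(n)$, which is $O(1/n)$ by the digamma asymptotic $\psi(n) = \log n - \tfrac{1}{2n} + O(1/n^2)$. Hence $\frac{1}{n}\bbE_{Q_n}[\scKL(dP_0^n\|dP_\theta^n)]$ is of order $1/n$, negligible compared to $\log n/n$, and I will absorb it into the $1$ in $C_9$.

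For the prior-vs-variational term I would compute
\begin{equation*}
\scKL\!\left(\Gamma(n,n/\lambda_0)\,\|\,\text{Inv-}\Gamma(\alpha_q,\beta_q)\right) = n\log(n/\lambda_0) - \log\Gamma(n) - \log\!\frac{\beta_q^{\alpha_q}}{\Gamma(\alpha_q)} + (n+\alpha_q)\bbE_{Q_n}[\log\lambda] - \frac{n}{\lambda_0}\bbE_{Q_n}[\lambda] + \beta_q\,\bbE_{Q_n}[1/\lambda],
\end{equation*}
using $\bbE_{Q_n}[1/\lambda] = n/(\lambda_0(n-1)) \le 2/\lambda_0$ for $n\ge 2$. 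Substituting the Gamma moments and invoking Stirling, $\log\Gamma(n) = n\log n - n - \tfrac{1}{2}\log n + \log\sqrt{2\pi} + O(1/n)$, the $n\log n$ and $n$ terms cancel cleanly, leaving a residual of order $\tfrac{1}{2}\log n$ plus the explicit constants $-\log\sqrt{2\pi} - \log(\beta_q^{\alpha_q}/\Gamma(\alpha_q)) + \alpha_q\log\lambda_0 + 2\beta_q/\lambda_0$. Bounding the $\tfrac{1}{2}\log n$ by $\log n$ and the rounding constants by $2$ yields the $\max(0,\cdot)$ expression that appears inside $C_9$, and division by $n$ gives the target rate $\e_n'^2 = \log n / n$. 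The analogous computation for $\mu$ contributes the second $\max(0,\cdot)$.

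The main technical nuisance will be keeping careful track of the Stirling and digamma expansions so that the leading $n\log n$, $n\log\lambda_0$, and $n$ terms cancel exactly; once those cancellations are verified, everything else is bookkeeping. The $\max(0,\cdot)$ constants are chosen so that potentially negative contributions (when the prior hyperparameters are large relative to $\lambda_0,\mu_0$) are absorbed safely, and the hypothesis that the prior hyperparameters are such that $C_9>0$ simply guarantees the bound is nonvacuous.
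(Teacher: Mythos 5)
Your proposal is correct and follows essentially the same route as the paper: the same factorization over $(\lambda,\mu)$ and over the prior/likelihood terms, the same Gamma-moment computations ($\bbE_{Q_n}[\lambda]=\lambda_0$, $\bbE_{Q_n}[1/\lambda]=n/((n-1)\lambda_0)$), and the same Stirling-driven cancellation leaving a $\tfrac{1}{2}\log n$ residual that produces the rate $\log n/n$. The only real difference is in the low-level bounding: where you invoke the digamma identity $\bbE_{Q_n}[\log(\lambda/\lambda_0)]=\psi(n)-\log n$ and asymptotic expansions with $O(1/n)$ remainders, the paper uses the one-sided elementary inequalities $\log x\le x-1$ (giving $\bbE_{Q_n}[\log(\lambda/\lambda_0)]\le 0$ directly) and $\sqrt{2\pi n}\,n^{n}e^{-n}\le n\Gamma(n)$, which deliver the stated constant $C_9$ non-asymptotically for all $n\ge 2$; to pin down that exact constant you would need to replace your $O(1/n)$ terms by their explicit one-sided versions (which exist and point in the required direction), so this is bookkeeping rather than a gap.
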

	
Lemmas~\ref{lem:nv3} and~\ref{lem:nv6}, combined together, identify that  the optimal staffing level computed using (VBJCCP) converges at the rate of $\e_n= \sqrt{\frac{\log n }{n}}$. More formally,

\begin{theorem}\label{thm:OSfinite}
    For $L^1_n(\vxi,\vxi_0) = n\sup_{c\in \cX} |\In_{(-\infty,0]}(1-W_q(c,\lambda,\mu)-\alpha)-  \In_{(-\infty,0]}(1-W_q(c,\lambda_0,\mu_0)-\alpha) |$ and $L^2_n(\vxi,\vxi_0) = n\sup_{c\in \cX} |c-  c |=0$, where $\cX$ is a finite set of positive integers, % prior density $\Pi(\lambda,\mu)=\text{Inv}-\Gamma_{\lambda}(\lambda;\alpha_q,\beta_q)\text{Inv}-\Gamma_{\mu}(\mu;\alpha_s,\beta_s)$, exponentially distributed inter-arrival and service times likelihood models with rate $\lambda$ and $\mu$ respectively, and the variational family $\mathcal{Q}= \big\{ q(\lambda,\mu)
    %= \Gamma(\lambda;a_q,b_q) \Gamma(\mu;a_s,b_s) \big\}$,
    there exists a constant $M>0$ (that depends on all the fixed hyper-parameters), such that for any $\eta>0$, 
    %and $\delta\in (0,\beta)$
    %\vspace{-1.2em}
    \( P_0^n[ |C^*_{VB}(\nX) -  C^*| > 2\eta  ] \leq M \e_n^2, \)
    %\vspace{-1.1em}
    where $\e_n^2=\frac{\log n}{n}$.
\end{theorem}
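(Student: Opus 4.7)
The plan is to assemble this as an immediate corollary of Theorem~\ref{thm:finite} specialized to the staffing problem, relying on the verification lemmas that have just been proved. Theorem~\ref{thm:finite} requires three ingredients: (i) Assumption~\ref{assump:Asf1} for both $L^1_n$ and $L^2_n$, (ii) Assumption~\ref{assump:Asf2}, (iii) Assumption~\ref{assump:Asf3}, plus a usable bound on $\eta_n^2$. I would check each in turn for the $M/M/c$ setting with product inverse-Gamma prior and product Gamma variational family.

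First, I would handle the two distance functions separately. For $L^2_n(\vxi,\vxi_0) = n\sup_{c\in\cX}|c-c| \equiv 0$, Assumption~\ref{assump:Asf1} is vacuous: the set $\{\vxi\in\Theta_n(\e):L^2_n(\vxi,\vxi_0)\geq C_1 n\e^2\}$ is empty for any $\e>0$, so one can take $\phi_{n,\e}\equiv 0$. Consequently the constant $M_2$ in Theorem~\ref{thm:finite} can be taken to be $0$, removing the $M_2/\eta$ term. For $L^1_n$, Lemma~\ref{lem:OStests} supplies a test $\phi'_{n,\e}$ with $\bbE_{P_0^n}[\phi'_{n,\e}]\leq e^{-Kn\e^2}$ valid for $\e>1$, and Proposition~\ref{prop:tests} then lifts this to a full verification of Assumption~\ref{assump:Asf1} for $L^1_n$, yielding some constant $M_1>0$.

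Second, since the sieve is chosen as $\Theta_n(\e)=\Theta=(0,\infty)^2$, Assumption~\ref{assump:Asf2} holds trivially. Assumption~\ref{assump:Asf3} with $\e_n^2=\log n/n$ is exactly the content of Lemma~\ref{lem:nv3}, giving the prior-mass condition for the R\'enyi neighborhood $A_n$ with appropriate constants $C_2,C_3$ depending on $\alpha_q,\alpha_s$. Third, to control $\eta_n^2$, I would apply Proposition~\ref{prop:eta_n}: Lemma~\ref{lem:nv6} exhibits the sequence $\Gamma(\lambda;n,n/\lambda_0)\Gamma(\mu;n,n/\mu_0)$ inside the variational family $\mathcal{Q}$ satisfying Assumption~\ref{assump:Asf11} with $\e_n'^2=\log n/n$ and constant $C_9$. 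Hence $\eta_n^2 \leq C_9 \log n/n$.

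With all hypotheses in place, Theorem~\ref{thm:finite} applies to the single-constraint (QoS) problem (the stability constraint $c\mu-\lambda>0$ is automatic once the QoS constraint is imposed, since $W_q$ is defined only for $\rho<1$), yielding
\[ P_0^n\bigl[|C^*_{VB}(\nX)-C^*|>2\eta\bigr] \leq \frac{M_1}{\min(\delta,1-\beta)}\bigl(\e_n^2+\eta_n^2\bigr) \]
for any $\delta\in(0,\beta)$. Fixing any such $\delta$ and absorbing constants, the right-hand side is bounded by $M\log n/n$ for a single constant $M$ depending on $M_1$, $C_9$, $\beta$, $\delta$, and the prior hyperparameters $(\alpha_q,\beta_q,\alpha_s,\beta_s)$. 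Finiteness of $\cX$ is used only to ensure the compactness hypothesis of Theorem~\ref{thm:finite} and the existence of the optimizer $C^*$. The main subtlety I anticipate is reconciling the discreteness of $\cX$ with the compactness phrasing of Theorem~\ref{thm:finite}; but since finite subsets of $\mathbb{N}$ are compact in the subspace topology and the indicator constraint is upper semicontinuous in $c$ on a discrete set, the proof of Theorem~\ref{thm:finite} goes through verbatim, with the sequence $\{\x_k\}$ approximating $\x^*$ in its argument taken to be eventually constant.
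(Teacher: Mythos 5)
Your proposal is correct and follows essentially the same route as the paper, whose proof is a one-line citation of exactly the ingredients you assemble: Lemmas~\ref{lem:OStests},~\ref{lem:nv3},~\ref{lem:nv6}, Propositions~\ref{prop:tests},~\ref{prop:eta_n}, and Theorem~\ref{thm:finite}. Your additional remarks (the vacuousness of the test for $L^2_n$ since $f(c,\vxi)=c$, the reduction to a single constraint, and the compactness of the finite set $\cX$) are sound elaborations of details the paper leaves implicit.
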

	\begin{proof}
	The proof is a direct consequence of Lemmas~\ref{lem:OStests},~\ref{lem:nv3},~\ref{lem:nv6}, Propositions~\ref{prop:eta_n},~\ref{prop:tests}, and Theorem~\ref{thm:finite}. 
	\end{proof}

%Moreover, notice that the result above also establishes the  rate of convergence of the optimal solution  computed using (BJCCP), since, when $\mathcal{Q}$ consists of all possible distributions then $\eta_n^2$ is $0$ and the  rate of convergence is just $\e_n$. 
Using the result above, we can directly establish the following result that quantifies the convergence rate of optimal staffing level computed using (BJCCP) approach.
\begin{corollary}~\label{corr:OSB}
 For $L^1_n(\vxi,\vxi_0) = n\sup_{c\in \cX} |\In_{(-\infty,0]}(1-W_q(c,\lambda,\mu)-\alpha)-  \In_{(-\infty,0]}(1-W_q(c,\lambda_0,\mu_0)-\alpha) |$ and $L^2_n(\vxi,\vxi_0) = n\sup_{c\in \cX} |c-  c |=0$, where $\cX$ is a finite set of positive integers, % prior density $\Pi(\lambda,\mu)=\text{Inv}-\Gamma_{\lambda}(\lambda;\alpha_q,\beta_q)\text{Inv}-\Gamma_{\mu}(\mu;\alpha_s,\beta_s)$, exponentially distributed inter-arrival and service times likelihood models with rate $\lambda$ and $\mu$ respectively, and the variational family $\mathcal{Q}$ is fixed to all possible distributions, then 
 there exists a constant $\bar M>0$ (that depends on all the fixed hyper parameters), such that for any $\eta>0$, 
    %and $\delta\in (0,\beta)$
    %\vspace{-1.2em}
    \( P_0^n[ |C^*_{B}(\nX) -  C^*| > 2\eta  ] \leq \bar M \e_n^2, \)
    %\vspace{-1.1em}
    where $C^*_{B}$ is the optimal staffing level computed using (BJCCP) and  $\e_n^2=\frac{\log n}{n}$.
\end{corollary}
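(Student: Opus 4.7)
The plan is to exploit the observation (already noted after Lemma~\ref{thrm:thm1} and used in both Corollary~\ref{corr:finiteB} and Corollary~\ref{corr:thm1}) that (BJCCP) is the special case of (VBJCCP) obtained by choosing the variational family $\mathcal{Q}$ to be the set of \emph{all} probability distributions on $\Theta$. Under this choice the minimizer in~\eqref{eq:vb_opt} is the true posterior $\pi(\vxi\mid \nX)$ itself, and the sequence $\eta_n^2$, defined as $\tfrac{1}{n}\inf_{q\in\mathcal{Q}} \bbE_{P_0^n}[\scKL(q(\vxi)\|\pi(\vxi\mid\nX))]$, is identically zero. Thus the rate bound of Theorem~\ref{thm:OSfinite} collapses from $(\e_n^2+\eta_n^2)$ to $\e_n^2$ for the Bayesian (as opposed to variational Bayesian) problem.

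Concretely, I would first recall Corollary~\ref{corr:finiteB}, which is the (BJCCP) analog of Theorem~\ref{thm:finite} and only requires Assumptions~\ref{assump:Asf1},~\ref{assump:Asf2}, and~\ref{assump:Asf3} to hold for the distance functions $L_n^1$ and $L_n^2$ specified in the hypothesis of the corollary. For the optimal staffing problem, the same verification of these assumptions used in Theorem~\ref{thm:OSfinite} carries over verbatim: Lemma~\ref{lem:OStests} together with Proposition~\ref{prop:tests} supplies the tests required by Assumption~\ref{assump:Asf1} for $L_n^1$; the assumption is vacuous for $L_n^2$ since $L_n^2\equiv 0$ in this problem (the cost $f(c,\vxi)=c$ is independent of $\vxi$); Assumption~\ref{assump:Asf2} holds trivially with $\Theta_n(\e)=\Theta$ and the inverse-Gamma prior; and Lemma~\ref{lem:nv3} delivers Assumption~\ref{assump:Asf3} with the rate sequence $\e_n^2=\tfrac{\log n}{n}$.

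Crucially, the characterization of $\eta_n^2$ via Lemma~\ref{lem:nv6} and Proposition~\ref{prop:eta_n} is \emph{not} needed here, because for (BJCCP) we directly have $\eta_n^2=0$; this is the only meaningful difference between the two proofs. Invoking Corollary~\ref{corr:finiteB} with the verified hypotheses therefore yields, for any $\eta>0$ and $\delta\in(0,\beta)$,
\[
P_0^n\bigl[\,|C^*_B(\nX)-C^*|>2\eta\,\bigr] \leq \left[\frac{M_1}{\min(\delta,1-\beta)}+\frac{M_2}{\eta}\right]\e_n^2,
\]
and absorbing the bracketed constants into a single $\bar{M}>0$ depending on the fixed hyperparameters $(\alpha_q,\beta_q,\alpha_s,\beta_s,\beta,\eta)$ gives the stated bound. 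There is no real obstacle here --- the proof is essentially a one-line reduction, with the bulk of the work already done in Lemmas~\ref{lem:OStests} and~\ref{lem:nv3} and in Theorem~\ref{thm:OSfinite}/Corollary~\ref{corr:finiteB}. The only thing to be slightly careful about is confirming that the hypotheses of Corollary~\ref{corr:finiteB} are the \emph{same} set verified for Theorem~\ref{thm:OSfinite} minus those pertaining to $\eta_n^2$, which they are.
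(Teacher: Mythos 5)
Your proposal is correct and follows essentially the same route as the paper: the paper's own proof is a one-line reduction invoking Theorem~\ref{thm:OSfinite} together with the observation that when $\mathcal{Q}$ is the set of all distributions the VB posterior coincides with the true posterior and $\eta_n^2=0$. Your more explicit routing through Corollary~\ref{corr:finiteB} and the re-verification of the hypotheses via Lemmas~\ref{lem:OStests} and~\ref{lem:nv3} is the same argument spelled out in slightly more detail.
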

\begin{proof}
    The proof follows straightforwardly from Theorem~\ref{thm:OSfinite} and the fact that $\qnv$ is the same as the true posterior distribution when  the variational family $\mathcal{Q}$ is fixed to all possible distributions.
\end{proof}
%\end{example}

Next, we discuss that the prior, the likelihood model, and the variational family easily satisfy Assumptions~\ref{assume:prior},~\ref{assume:lan}, and~\ref{assume:Var}, that are required to show consistency of $C^*_{VB}$. Notice that the prior density $\Pi(\lambda,\mu)=\text{Inv}-\Gamma_{\lambda}(\lambda;\alpha_q,\beta_q)\text{Inv}-\Gamma_{\mu}(\mu;\alpha_s,\beta_s)$ is continuous in $\vxi=\{\lambda,\mu\}$ and places positive mass in the neighbourhood of the true parameter $\vxi_0$  and moreover it is bounded, therefore it satisfies Assumption~\ref{assume:prior}. The exponential models are twice continuously differentiable therefore it satisfies the LAN condition in Assumption~\ref{assume:lan}. Moreover, the variational family, the product of Gamma distributions on $\lambda$ and $\mu$, is absolutely continuous with respect to  the prior distribution and also consists of a sequence of distribution that converges at the true parameter at the rate of $\sqrt{n}$~(refer the construction in~lemma~\ref{lem:nv6}). Therefore, the $\mathcal{Q}$ satisfies Assumption~\ref{assume:Var}. Under  these assumptions, it can be shown using the result in Theorem~\ref{thm:1} that the optimal number of servers computed using (VBJCCP) (and (BJCCP)) are consistent.

\section{Conclusion}
Models of data-driven stochastic optimization have been the subject of a growing body of literature. This paper contributes to this by introducing a Bayesian formulation of a data-driven chance constrained optimization problem. The primary impediment to  practical implementation is the computation of the posterior distribution, which must in almost all circumstances be approximated. This paper advocates for a variational Bayesian (VB) approach to approximate the posterior, both for computational reasons and for ensuring the solution set is convex-feasible (modulo regularity). VB necessarily introduces bias into the estimated posterior expected costs and chance constraints. However, this paper also  rigorously proves asymptotic consistency (in a frequentist sense) and a rate of convergence of the feasible sets and value to those of a `true' constrained optimization problem. \rev{Moreover, in this work, we only addressed the statistical question, which is agnostic to the algorithm used to compute the global VB approximator. Studying the quality of the local VB approximation computed by solving the non-convex ELBO objective together with the subsequent solution of the (VBJCCP) is an interesting future research direction. }

\section{Proofs}~\label{sec:Proof4}

%\subsection{} 
%\subsubsection{Proof of Lemma~\ref{lem:Gcons}}
\begin{proof}[Proof of Lemma~\ref{lem:Gcons}]
    First observe that
    \begin{align*}
        \bbE_{\qnv} [L^1_n(\vxi,\vxi_0) ] & = n\bbE_{\qnv} [\sup_{\x\in \cX} |\In_{(-\infty,0]}(g(\x,\vxi))- \In_{(-\infty,0]}(g(\x,\vxi_0)) | ]
        \\
        %&\geq
        &\geq n \sup_{\x\in \cX} \bbE_{\qnv}  |\In_{(-\infty,0]}(g(\x,\vxi))- \In_{(-\infty,0]}(g(\x,\vxi_0)) | ]
        \\
        &\geq n \sup_{\x\in \cX} \left| \bbE_{\qnv} [ \left(\In_{(-\infty,0]}(g(\x,\vxi)) - \In_{(-\infty,0]}(g(\x,\vxi_0)) \right)] \right|
        %\\
        %&= n \sup_{\x\in \cX} \left| \bbE_{\qnv} [ \In_{(-\infty,0]}(g(\x,\vxi))]  - \In_{(-\infty,0]}(g(\x,\vxi_0))  \right|
        %\\
        %&\geq n \sup_{\x\in \cX} \int_{\Theta} \left|\In_{(-\infty,0]}(g(\x,\vxi))\qnv d\vxi - \In_{(-\infty,0]}(g(\x,\vxi_0)) \right|
       % \\
        %&\geq n \sup_{\x\in \cX} \left|\int_{\Theta} \In_{(-\infty,0]}(g(\x,\vxi))\qnv d\vxi - \In_{(-\infty,0]}(g(\x,\vxi_0)) \right|
        \\
        &= n \sup_{\x\in \cX} \left| Q^* \left(g(\x,\vxi) \leq 0  |\nX \right)  - \In_{(-\infty,0]}(g(\x,\vxi_0)) \right|.
    \end{align*}
    Now using \removet{Theorem}\revt{Lemma}~\ref{thrm:thm1} and the inequality above, it is straightforward to observe that the assertion of the lemma follows.
    % \begin{align*}
    %     \bbP_0[ \sup_{\x \in \cX}|Q^* \left(g(\x,\vxi) \leq 0  |\nX \right) - \In_{(-\infty,0]}(g(\x,\vxi_0))| > \delta  ] 
    %     & \leq \bbP_0 \left[ \bbE_{\qnv} [ L^1_n(\vxi,\vxi_0) ] > n \delta  \right] 
    %     \\
    %     & \leq \frac{ M_1 }{\delta} ( \e_n^2 + \eta_n^2 ). %\qedhere
    % \end{align*}
\end{proof}

%\subsubsection{}
\begin{proof}[Proof of Lemma~\ref{lem:Fcons}]
    Proof is similar to Lemma~\ref{lem:Gcons} hence omitted. 
\end{proof}

%\subsubsection
\begin{proof}[Proof of Proposition~\ref{prop:eta_n}]
The proof follows straightforwardly using the definition of $\eta_n^2$ and Assumption~\ref{assump:Asf11}.
\end{proof}

%\subsubsection{}
\begin{proof}[Proof of Proposition~\ref{prop:tests}]
Note that consistent tests always exist for finite-dimensional models on fixed null and alternate sets; for instance, the Kolmogorov-Smirnov test statistic~\cite[Theorem 19.1]{vdV00}. Therefore, the condition of Lemma~\ref{lem:Lecam} is always satisfied for finite dimensional (or parametric) models. Now for distance functions $L_n^1(\vxi,\vxi_0)$ in Theorem~\ref{thm:finite} and  $L_n^i(\vxi,\vxi_0)$ in Theorem~\ref{prop:2} fix $\mathcal{P}_0=P_0^n$ and $\mathcal{P}_1=\{P_{\vxi}^n: L_n^{(\cdot)}(\vxi,\vxi_0)>n\e^2\}$, where we use $L_n^{(\cdot)}$ to reference either  $L_n^1(\vxi,\vxi_0)$ or $L_n^i(\vxi,\vxi_0)$ for brevity. Note that for any $\e\in(0,1]$, $\mathcal{P}_1$ is fixed. Therefore, it follows from Lemma~\ref{lem:Lecam} that for any $\e\in(\e_n,1]$,
\(  \bbE_{P_0^n}[\phi_n] \leq e^{-Kn} \leq e^{-Kn\e^2} \text{ and } \sup_{P^n \in \mathcal{P}_1 } \bbE_{P^n}[1-\phi_n] \leq e^{-Kn} \leq e^{-Kn\e^2}.\)
For $\e>1$, by assumption in the assertion of the proposition we have,
%it is always possible to construct a test $\phi'_{n,\e}$ such that  
\( \bbE_{P_0^n}[\phi'_{n,\e}] \leq e^{-K n\e^2} \text{ and } \sup_{P^n \in \mathcal{P}_1 } \bbE_{P^n}[1-\phi'_{n,\e}]  = 0 \leq e^{-Kn\e^2},\)
where the second equality follows since $\mathcal{P}_1$ is null set  for $\e>1$.
%For instance, for $\phi'_{n,\e} = \In_{\sum_{i=1}^{n}X_i - n\vxi_0 \geq K'n\e^2}$, observe that
%\begin{align}
%     \bbE_{P_0}[\phi'_{n,\e}] \leq \bbP_0\left({\sum_{i=1}^{n}X_i - n\vxi_0 \geq K'n\e^2}\right) \leq e^{-K'n\e^2} e^{-n\vxi_0} \bbE[e^{\sum_{i=1}^{n}X_i}]= e^{-K'n\e^2} e^{-n\vxi_0} (\bbE[e^{X_i}])^n.
% \end{align}
Therefore, it follows that there exists a test $\phi_{n,\e} = \phi_n \In_{\{\e\in(0,1])\}} + \phi'_{n,\e} \In_{\{\e\in(1,\infty)\}} $ such that distance function $L_n^{(\cdot)}$ satisfies Assumption~\ref{assump:Asf1}.
\end{proof}

%\subsubsection{}
\begin{proof}[Proof of Lemma~\ref{lem:pw}]
    Lemma~\ref{lem:VBcons} implies that the VB approximate posterior $\qnv$ is consistent, and it follows from Definition~\ref{def:degen} that for every $\eta>0$, $\int_{\|\vxi-\vxi_0\|>\eta} \qnv d\vxi  \overset{P_0^n}{\to} 0 \text{ as }  n \to \infty. $
    %that is for any $\eta>0$,  the NV variational posterior  satisfies
    % \begin{align}  
    %     \label{eq:p1}
    % \end{align}
    In fact, $\qnv$ converges pointwise to $\delta_{\vxi_0}$ almost everywhere with respect to Lebesgue measure. Consequently, Scheff\'e's lemma~\cite[Corollary 2.30]{vdV00} implies that $\qnv$ converges to $\delta_{\vxi_0}$ in total-variation distance, that is
    %\vspace{-1em}
    \begin{align}
        d_{TV}(\qnv,\delta_{\vxi_0}) = \sup_{A\subseteq \Theta}|Q^*(A|\nX)- \delta_{\vxi_0}(A)|\overset{P_0^n}{\to} 0 \text{ as } n\to \infty,\label{eq:tv}
    \end{align} where for  any set $A\subseteq \Theta$, $Q^*(A|\nX) = \int_{A}\qnv d\vxi $.  Using this observation note that
    %\vspace{-1em}
    \begin{align}
    \nonumber
        \sup_{\x\in \cX}& \left| \int_{\Theta} \prod_{i=1}^{m}\In_{(-\infty,0]}(g_i(\x,\vxi)) \qnv d\vxi - \prod_{i=1}^{m}\In_{(-\infty,0]}(g_i(\x,\vxi_0))  \right| 
        \\
        \nonumber
        &= \sup_{\x\in \cX} \left| Q^*(\cap_{i=1}^{m}\{g_i(\x,\vxi)<0\}  - \delta_{\vxi_0}(\cap_{i=1}^{m}\{g_i(\x,\vxi)<0\}  \right| 
        \\
        \nonumber
        &=\left| Q^*(\cap_{i=1}^{m}\{g_i(\bar\x,\vxi)<0\}  - \delta_{\vxi_0}(\cap_{i=1}^{m}\{g_i(\bar\x,\vxi)<0\}  \right|
        \leq d_{TV}(\qnv,\delta_{\vxi_0}),
    \end{align}
    for some $\bar\x\in \cX$ at which supremum is attained in the RHS of the first equality above. Now the result follows straightforwardly from~\eqref{eq:tv}.
    
    % Observe that for any $\x \in \cX$ and $\eta>0$,
    % \begin{align}
    %     \nonumber
    %     Q^* \left(\prod_{i=1}^{m}\In_{(-\infty,0]}(g_i(\x,\vxi)) |\nX \right) &= \int_{\Theta}\prod_{i=1}^{m}\In_{(-\infty,0]}(g_i(\x,\vxi)) \qnv d\vxi
    %     \\
    %     = \int_{\|\vxi-\vxi_0\|>\eta} \prod_{i=1}^{m}\In_{(-\infty,0]}&(g_i(\x,\vxi)) \qnv d\vxi + \int_{\|\vxi-\vxi_0\|\leq \eta}\prod_{i=1}^{m}\In_{(-\infty,0]}(g_i(\x,\vxi)) \qnv d\vxi.
    %     \label{eq:p2}
    % \end{align}
    % Observe that the result in~\eqref{eq:p1}, combined with the fact that the first term in~\eqref{eq:p2} is always positive and bounded by $1$, implies that  $\lim_{n \to \infty} \int_{\|\vxi-\vxi_0\|>\eta}\prod_{i=1}^{m}\In_{(-\infty,0]}(g_i(\x,\vxi)) \qnv d\vxi \overset{P_0^n}{=} 0 $.
    % Now taking limits on either side of~\eqref{eq:p2}, we have
    % \begin{align}
    %     \nonumber
    %     \lim_{n \to \infty} Q^* \left(\prod_{i=1}^{m}\In_{(-\infty,0]}(g_i(\x,\vxi)) |\nX \right) &\overset{P_0^n}{=} \lim_{n \to \infty} \int_{\|\vxi-\vxi_0\|\leq \eta}\prod_{i=1}^{m}\In_{(-\infty,0]}(g_i(\x,\vxi)) \qnv d\vxi 
    %     \\
    %     &\overset{P_0^n}{=} \prod_{i=1}^{m}\In_{(-\infty,0]}(g_i(\x,\vxi_0)),
    %     \label{eq:p3}
    % \end{align}
    % where the last equality follows from the definition of weak convergence (and Lemma~\ref{lem:VBcons}) and the result follows.
\end{proof}

\begin{proof}[Proof of Lemma~\ref{lem:of}]
    %Due to Assumption~\ref{ass:Cath} and Lemma~\ref{lem:VBcons}, both the results are a direct consequence of the result in~\cite[Theorem 3.7]{Dupacova1988}. 
\noindent \textbf{Part 1: Point-wise convergence}
The proof uses similar ideas as used in the proof of~\cite[Theorem 3.7]{Dupacova1988}.
Fix $x\in\cX$. Due to Assumption~\ref{ass:Cath}(3), $f(\x,\vxi)$ is uniformly integrable with respect to  any $q\in \cQ$, which implies that for $\qnv$ and for any $\epsilon>0$, there exists a compact set $K_{\e}$ such that for all $n\geq 1$
\( \int_{\Theta \backslash K_{\e} } |f(\x,\vxi)|\qnv d\vxi <\e. \) 

Now fix $\gamma_{\e} := \max_{\vxi \in K_{\e} } |f(\x,\vxi)| $. Note that $\gamma_{\e}<+\infty$, since $K_{\e}$ is compact and $f(\x,\cdot)$ is a continuous mapping for any $x\in \cX$. Define $f_{\e}(\x,\vxi)$ be the truncation of $f(\x,\vxi)$, that is  $f_{\e}(\x,\vxi) = \{f(\x,\vxi)~\text{ if } |f(\x,\vxi)|<\gamma_{\e}, 
    \gamma_{\e}~\text{ if } f(\x,\vxi)>\gamma_{\e}, \text{ and }
    -\gamma_{\e}~\text{ if } f(\x,\vxi)<-\gamma_{\e}\}.
    $
%\begin{align}
    % f_{\e}(\x,\vxi) = \begin{cases}
    % f(\x,\vxi)~&\text{ if } |f(\x,\vxi)|<\gamma_{\e}\\
    % \gamma_{\e}~&\text{ if } f(\x,\vxi)>\gamma_{\e}\\
    % -\gamma_{\e}~&\text{ if } f(\x,\vxi)<-\gamma_{\e}.
    % \end{cases}
%\end{align}
It follows from the definition above that $|f_{\e}(\x,\vxi)|\leq  |f(\x,\vxi)| $, which implies that
\(
   \int_{\Theta \backslash K_{\e} } |f_{\e}(\x,\vxi)|\qnv d\vxi <\e .
\)
Note the $f_{\e}(\x,\vxi)$ is bounded and continuous in $\vxi$, therefore, it follows using the definition of weak convergence and Lemma~\ref{lem:VBcons} that
%\vspace{-1em}
\begin{align}
     \lim_{n\to \infty} \bbE_{\qnv}[f_{\e}(\x,\vxi)] \overset{P_{0}^n}{=}  f_{\e}(\x,\vxi_0).
     \label{eq:pwtrunc}
\end{align}

Next observe that
\begin{align}
\nonumber
    |\bbE_{\qnv}&[f(\x,\vxi)] - f(\x,\vxi_0)| \\
    \nonumber
    &=\left|\bbE_{\qnv}[f(\x,\vxi)] - \bbE_{\qnv}[f_{\e}(\x,\vxi)] + \bbE_{\qnv}[f_{\e}(\x,\vxi)] -f_{\e}(\x,\vxi_0)\right. \\ \nonumber &\qquad\qquad\qquad\qquad\qquad\qquad\qquad\qquad\qquad\qquad\qquad  \left.+f_{\e}(\x,\vxi_0)- f(\x,\vxi_0)\right|
    \\
    % \nonumber
    % &\leq \left|\bbE_{\qnv}[f(\x,\vxi)] - \bbE_{\qnv}[f_{\e}(\x,\vxi)] \right| + \left| \bbE_{\qnv}[f_{\e}(\x,\vxi)] -f_{\e}(\x,\vxi_0) \right|\\ \nonumber &\qquad\qquad\qquad\qquad\qquad\qquad\qquad\qquad\qquad\qquad\qquad\qquad +\left| f_{\e}(\x,\vxi_0)- f(\x,\vxi_0)\right|
    %\\
    \nonumber
    &\leq \left|\bbE_{\qnv}[f(\x,\vxi)] - \bbE_{\qnv}[f_{\e}(\x,\vxi)] \right| + \left| \bbE_{\qnv}[f_{\e}(\x,\vxi)] -f_{\e}(\x,\vxi_0) \right|\\  & \qquad\qquad\qquad\qquad\qquad\qquad\qquad\qquad\qquad\qquad\qquad+\left| f_{\e}(\x,\vxi_0)- f(\x,\vxi_0)\right|.
    ~\label{eq:eqbd}
    \end{align}
    \sloppy
    Now using the definition of $f_{\e}(\x,\vxi)$ note that
    \( \left|\bbE_{\qnv}[f(\x,\vxi)] - \bbE_{\qnv}[f_{\e}(\x,\vxi)] \right|
    = \left|\int_{\Theta\backslash K_{\e}} (f(\x,\vxi)-f_{\e}(\x,\vxi))\qnv d\vxi \right|
    \leq \int_{\Theta\backslash K_{\e}} |f(\x,\vxi)|\qnv d\vxi + \int_{\Theta\backslash K_{\e}} |f_{\e}(\x,\vxi)|\qnv d\vxi 
    \leq 2\e.
    \)
    
    %\begin{align*} \left|\bbE_{\qnv}[f(\x,\vxi)] - \bbE_{\qnv}[f_{\e}(\x,\vxi)] \right|
    % &= \left|\int_{\Theta\backslash K_{\e}} (f(\x,\vxi)-f_{\e}(\x,\vxi))\qnv d\vxi \right|
    % \\
    % \leq \int_{\Theta\backslash K_{\e}}& |f(\x,\vxi)|\qnv d\vxi + \int_{\Theta\backslash K_{\e}} |f_{\e}(\x,\vxi)|\qnv d\vxi 
    % &
    % \leq 2\e.
    % \end{align*}
    Similarly, $\left| f_{\e}(\x,\vxi_0)- f(\x,\vxi_0)\right| \leq 2\e$, since due to Assumption~\ref{ass:Cath}(3) $\int_{\Theta \backslash K_{\e} } |f(\x,\vxi)|\qnv d\vxi <\e$ is true for all $n\geq 1$ and consequently for $\delta_{\vxi_0}$ as well.
    Hence, substituting the above two observations into~\eqref{eq:eqbd} yields
    \(|\bbE_{\qnv}[f(\x,\vxi)] - f(\x,\vxi_0)|\leq 4\e + \left| \bbE_{\qnv}[f_{\e}(\x,\vxi)] -f_{\e}(\x,\vxi_0) \right|.\)
    Consequently,  it follows for any $\e>0$ that,
   % \begin{align}
        $$P_0^n\left( |\bbE_{\qnv}[f(\x,\vxi)] - f(\x,\vxi_0)| >5\e  \right) \leq P_0^n\left( |\bbE_{\qnv}[f_{\e}(\x,\vxi)] - f_{\e}(\x,\vxi_0)| >\e  \right).$$ 
    %\end{align}
    Now taking limits $n\to\infty$ on either side of the inequality above, the result follows straightforwardly using the observation in~\eqref{eq:pwtrunc}.

   \noindent \textbf{Part 2: Uniform convergence: }
    
    Since $\cX$ is compact and $f(\x,\vxi_0)$ is continuous in  $\x$, using Corollary 2.2 in~\cite{Newey1991} the uniform convergence follows from point-wise convergence (Part 1) if there exist a bounded sequence $B_n$ %=O_{P_0^n}(1)$
    and for all $\x_1,\x_2 \in \cX $, $|\bbE_{\qnv}[f(\x_1,\vxi)]-\bbE_{\qnv}[f(\x_2,\vxi)] |\leq B_n \|\x_1-\x_2\|$. Since, $f(\x,\vxi)$ is locally Lipschitz in $\x$  due to Assumption~\ref{ass:Cath}(2), therefore for $\x_1,\x_2 \in \cX $, 
    \begin{align}
    \nonumber
      |\bbE_{\qnv}[f(\x_1,\vxi)]-\bbE_{\qnv}[f(\x_2,\vxi)] | &\leq \bbE_{\qnv}[ |f(\x_1,\vxi) - f(\x_2,\vxi)|] 
      \\
      &\leq \bbE_{\qnv}[K_{\cX}(\vxi)]\|x_1-x_2\|.  
    \end{align}
The uniform convergence follows since by Assumption~\ref{ass:Cath}(2) $\bbE_{\qnv}[K_{\cX}(\vxi)] \leq \bar K_{\cX} $.
    \end{proof}

%\subsection{Proofs in Section~\ref{sec:App}}

%\subsubsection{}
\begin{proof}[Proof of Lemma~\ref{lem:OStests}]
    Due to independence of arrival and service time distributions, first note that 
   \(
    \bbE_{P^{n}_0}[ \phi'_{n,\e} ] = P^{n}_0 \Big(  \nX:  \left|n^{-1}{\sum_{i=1}^{n}{T_i-T_{i-1}}} - \lambda_0 \right| >  \lambda_0 \sqrt{{(n+2)}{(n-2)^{-2}}} e^{Cn\e^2}   \Big) 
    \times P^{n}_0 \Big(  \nX:  \left|{n^{-1}}{\sum_{i=1}^{n}{E_i-S_i}} - \mu_0 \right| >  \mu_0 \sqrt{{(n+2)}{(n-2)^{-2}}} e^{Cn\e^2}  \Big) .
    \)
    % \begin{align*}
    % \bbE_{P^{n}_0}[ \phi'_{n,\e} ] = &P^{n}_0 \Bigg(  \nX:  \left|n^{-1}{\sum_{i=1}^{n}{T_i-T_{i-1}}} - \lambda_0 \right| >  \lambda_0 \sqrt{{(n+2)}{(n-2)^{-2}}} e^{Cn\e^2}   \Bigg) 
    % \\ 
    % &\times P^{n}_0 \Bigg(  \nX:  \left|{n^{-1}}{\sum_{i=1}^{n}{E_i-S_i}} - \mu_0 \right| >  \mu_0 \sqrt{{(n+2)}{(n-2)^{-2}}} e^{Cn\e^2}  \Bigg) 
    % \end{align*}
%We first verify that this  test function satisfies condition (\textit{i}) of the Lemma. 
Denote $\xi_i=T_i-T_{i-1}$. Using Chebyschev's inequality observe that 
%\vspace{-0.5em}
\begin{align}
\nonumber
	 P^{n}_0 \Bigg(  \left|\frac{n}{\sum_{i=1}^{n}{\xi_i}} - \lambda_0 \right| &>  \lambda_0 \sqrt{\frac{n+2}{(n-2)^2}} e^{Cn\e^2}   \Bigg) \leq {\frac{(n-2)^2 e^{-{2C} n\e^2} }{\lambda_0^2(n+2)}} \bbE_{P^{n}_0}  \left|\frac{n}{\sum_{i=1}^{n}{\xi_i}} - \lambda_0 \right|^2 
	\\
	\nonumber
	&= {\frac{(n-2)^2}{\lambda_0^2(n+2)}} e^{-{2C} n\e^2} \bbE_{P^{n}_0} \left[  \left(\frac{n}{\sum_{i=1}^{n}{\xi_i}}\right)^2 + \lambda_0^2  - \left(\frac{2n\lambda_0}{\sum_{i=1}^{n}{\xi_i}}\right) \right].
\end{align}
Now using the fact that the sum of $n$ i.i.d  exponential random variable with rate parameter $\lambda_0$ is Gamma distributed with rate and shape parameter $\lambda_0$ and $n$ (respectively), we obtain that the RHS in the equation above is bounded above by 
%\vspace{-0.5em}
\begin{align}
	%\nonumber 
	{\frac{(n-2)^2}{\vxi_0^2(n+2)}} e^{-{2C} n\e^2} \vxi_0^2  \left[ \frac{n^2}{(n-1)(n-2)} +1 - \frac{2n}{n-2} \right]
	%\\
	%\nonumber
	%&= {\frac{(n-2)^2}{n+2}} e^{-{2C} n\e^2} \left[ \frac{n+2}{(n-1)(n-2)}  \right]
	%\\
	%&
	\leq  e^{-{2C} n\e^2}.
	\end{align}
Now, choosing $C=K/2$, we have
\(\bbE_{P^{n}_0}[ \phi'_{n,\e} ] \leq e^{-{K} n\e^2}, \)
and the proposition follows.
%where the second inequality uses the fact that the sum of $n$ i.i.d  exponential random variable with rate parameter $\vxi_0$ is Gamma distributed with rate and shape parameter $\vxi_0$ and $n$ (respectively). 
%Thus, condition(\textit{i})  is satisfied.
\end{proof}

%\subsubsection{}
\begin{proof}[Proof of Lemma~\ref{lem:nv3}]
Due to independence of arrival and service time distributions, observe that
%&= \frac{1}{\rho} \log \int \left(\frac{dP_0^n}{dP_{\vxi}^n}\right)^\rho dP_0^n
%\vspace{-0.5em}
\begin{align*}
    D_{1+\rho}& \left( P_0^n \| P_{\vxi}^n \right)  = n \frac{1}{\rho} \log \int \left(\frac{dP_{\lambda_0}}{dP_{\lambda}}\right)^\rho dP_{\lambda_0} + n \frac{1}{\rho} \log \int \left(\frac{dP_{\mu_0}}{dP_{\mu}}\right)^\rho dP_{\mu_0}
    \\
    &= n  \left( \log \frac{\lambda_0}{\lambda} + \frac{1}{\rho} \log \frac{\lambda_0}{(\rho + 1)\lambda_0 -\rho \lambda } \right)
     +n  \left( \log \frac{\mu_0}{\mu}  + \frac{1}{\rho} \log \frac{\mu_0}{(\rho + 1)\mu_0 -\rho \mu } \right), 
\end{align*} 
when  $\left((\rho + 1)\lambda_0 -\rho \lambda\right)>0$ and $\left((\rho + 1)\lambda_0 -\rho \lambda\right)>0$, otherwise $D_{1+\rho} \left( P_0^n \| P_{\vxi}^n \right) = \infty$. 
Using the straightforward inequality for two independent random variables $A$ and $B$ that $P(A+B \leq 2c)\geq P(\{A \leq c\} \cup\{ B \leq c \}  )  = P(\{A \leq c\}) P(\{ B \leq c \}  )$, it follows that
%\begin{align}
%\nonumber
  \(  \Pi(D_{1+\rho} \left( P_0^n \| P_{\vxi}^n \right) \leq C_3n\e_n^2) \geq \text{Inv}-\Gamma_{\lambda}(D_{1+\rho} \left( P_{\lambda_0}^n \| P_{\vxi}^n \right) \leq 0.5C_3n\e_n^2)\times
    %\\ \quad
    \text{Inv}-\Gamma_{\mu}(D_{1+\rho} \left( P_{\mu_0}^n \| P_{\vxi}^n \right) \leq 0.5C_3n\e_n^2).\)
    %\label{eq:os1}
%\end{align}

Now consider the first term of the product in the RHS of the equation above. Observe that, $D_{1+\rho} \left( P_0^n \| P_{\lambda}^n \right)$ is non-decreasing in $\rho$ (this also follows from non-decreasing property of the R\'enyi divergence  with respect to $\rho$). Therefore, observe that
\begin{align*}
\text{Inv}-\Gamma_{\lambda}(D_{1+\rho} \left( P_0^n \| P_{\lambda}^n \right) \leq 0.5 C_3n\e_n^2)&\geq \text{Inv}-\Gamma_{\lambda}(D_{\infty} \left( P_0^n \| P_{\lambda}^n \right) \leq 0.5 C_3n\e_n^2) 
\\
%&= \text{Inv}-\Gamma_{\lambda}\left( 0 \leq \log \frac{\lambda_0}{\lambda} \leq 0.5 C_3 \e_n^2\right) \\
&= \text{Inv}-\Gamma_{\lambda}\left( \lambda_0e^{-0.5 C_3 \e_n^2}\leq {\lambda} \leq \lambda_0 \right). 
\end{align*}

%Now, recall that for a set $A\subseteq \lambda=[T,\infty)$, we define $\Pi(A) = \text{Inv}-\Gamma(A\cap \lambda)/\text{Inv}-\Gamma(\lambda) $. %Now, observe that for sufficiently small $T$ and large enough $n$,  we have
%\[ \Pi\left( \lambda_0e^{-0.5 C_3 \e_n^2}\leq {\lambda} \leq \lambda_0 \right) \geq  \text{Inv}-\Gamma\left( \lambda_0e^{-0.5 C_3 \e_n^2}\leq {\lambda} \leq \lambda_0 \right) \]
The  cumulative  distribution function of inverse-gamma distribution  is $\text{Inv}-\Gamma_{\lambda}(\{\lambda< t \} ):= \frac{\Gamma\left(\alpha_q,\frac{\beta_q}{t} \right)}{\Gamma(\alpha_q)}$, where $\alpha_q(>0)$ is the  shape parameter, $\beta_q(>0)$ is the  scale parameter, $\Gamma(\cdot)$ is the Gamma function,  and $\Gamma(\cdot,\cdot)$ is the  incomplete Gamma function. Therefore, it  follows for $\alpha\geq 1$ that
\begin{align} 
    \nonumber
    \text{Inv}-\Gamma_{\lambda}\Big( \lambda_0e^{-0.5 C_3 \e_n^2}\leq {\lambda} &\leq \lambda_0 \Big)
    %& = \frac{\Gamma\left(\alpha_q,\beta_q/\lambda_0 \right) - \Gamma\left(\alpha_q,\beta_q/\lambda_0 e^{0.5 C_3  \e_n^2}  \right)}{\Gamma(\alpha_q)}\\ \nonumber 
     = \frac{\int_{\beta_q/\lambda_0}^{\beta_q/\lambda_0 e^{0.5 C_3  \e_n^2}} e^{-x} x^{\alpha_q-1}dx }{\Gamma(\alpha_q)}
    \\
    \nonumber
    &\geq \frac{e^{-\beta_q/\lambda_0 e^{0.5 C_3  \e_n^2} + \alpha_q 0.5 C_3\e_n^2 }}{\alpha_q \Gamma(\alpha_q)} \left(  \frac{\beta_q}{\lambda_0}\right)^{\alpha_q} \left[ 1 - e^{-\alpha_q 0.5 C_3  \e_n^2 }  \right] 
    %\\
    %\nonumber
    %&\geq e^{-\beta/\lambda_0}  \left(  \frac{\beta}{\lambda_0}\right)^{\alpha-1} \left[ 1 - e^{-\beta/\lambda_0{0.5 C_3  \e_n^2 }} \right]   
    \\
    \nonumber
    & \geq \frac{e^{-\beta_q/\lambda_0 e^{0.5 C_3  }  }}{\alpha_q \Gamma(\alpha_q)} \left(  \frac{\beta_q}{\lambda_0}\right)^{\alpha_q} \left[e^{-\alpha_q{0.5 C_3 n \e_n^2 }} \right] 
    %\geq e^{-\beta/\lambda_0}  \left(  \frac{\beta}{\lambda_0}\right)^{\alpha-1} \left[e^{-\lambda_0/\beta ~C_2' n\e_n^2} \right] 
\end{align}
where the penultimate inequality follows since $0<\e_n^2<1$ and the last inequality follows from the fact  that, 
$ 1 - e^{-\alpha_q 0.5 C_3  \e_n^2} \geq e^{-\alpha_q{0.5 C_3 n \e_n^2 }}$, for large enough $n $. Also note that, $ 1 - e^{-\alpha_q 0.5 C_3  \e_n^2} \geq e^{-\alpha_q{0.5 C_3 n \e_n^2 }}$ can't hold true for $\e_n^2=1/n$. However, for $\e_n^2=\frac{\log n}{n}$ it holds for any $n\geq 2$ when $\alpha_q C_3 >4$.
Using similar steps as above we can also bound 
%\begin{align*}
    \(\text{Inv}-\Gamma_{\mu}(D_{1+\rho} \left( P_{\mu_0}^n \| P_{\mu}^n \right) \leq 0.5 C_3n\e_n^2)\geq \frac{e^{-\beta_s/\mu_0 e^{0.5 C_3  }  }}{\alpha_s \Gamma(\alpha_s)} \left(  \frac{\beta_s}{\mu_0}\right)^{\alpha_s} \left[e^{-\alpha_s{0.5 C_3 n \e_n^2 }} \right],\)
%\end{align*}
for $\alpha_s C_3 >4$.
Therefore, substituting the above two results we have for the prior distribution  defined as the product of two inverse-Gamma priors  on $\lambda$ and $\mu$, $C_3> 4\max(\alpha_s^{-1},\alpha_q^{-1})$, $C_2=0.5 (\alpha_q +\alpha_s ) C_3$ and any $\rho>1$ the  result follows for sufficiently large $n$. 
\end{proof}

%\subsubsection{}
\begin{proof}[Proof of Lemma~\ref{lem:nv6}]
    Since family $\mathcal{Q}$ contains all product Gamma distributions, observe that $\{q_n(\cdot) \in \mathcal{Q} \} \forall n\geq 1$. First, due to independence of queue and server data observe that
    %%\vspace{-1em}
    \begin{align}
    \nonumber
        \scKL&\left(Q_n(\lambda,\mu)\|\Pi(\vxi) \right) + \bbE_{Q_n(\vxi)} \left[ \scKL\left(dP^n_{0}(\nX))\| dP^n_{\vxi}(\nX) \right) \right] 
        \\
        & =  \scKL\left(q_n(\lambda)\|\pi(\lambda) \right) + \bbE_{q_n(\lambda)} \left[ \scKL\left(dP^n_{\lambda_0}(\nX(q)))\| dP^n_{\lambda}(\nX(q)) \right) \right] 
        \label{eq:os2}
        \\
        &\quad +  \scKL\left(q_n(\mu)\|\pi(\mu) \right) + \bbE_{q_n(\mu)} \left[ \scKL\left(dP^n_{\mu_0}(\nX(s)))\| dP^n_{\mu}(\nX(s)) \right) \right] 
        \label{eq:os3},
    \end{align} 
    where  $q_n(\cdot) = \frac{n^{n}}{(\cdot)_0^{n} \Gamma({n})} (\cdot)^{{n}-1}e^{-{n}\frac{(\cdot)}{(\cdot)_0}}$, $\nX(q)$ and $\nX(s)$ denote the data pertaining to arrival and service times respectively, $\pi(\cdot)$ denote the $\text{Inv}-\Gamma_{\cdot}$  prior. Now consider the first term in~\eqref{eq:os2}; using the definition of the $\scKL$ divergence it follows that 
    %\vspace{-1em}
    \begin{align}
        \scKL(q_n(\lambda)\| \pi(\lambda)) = \bbE_{ q_n(\lambda)}[ \log (q_n(\lambda))] - \bbE_{ q_n(\lambda)}[ \log (\pi(\lambda))].
        \label{eq:eqr1}
    \end{align}  
    Substituting $q_n(\lambda)$ in the first term of the equation above and expanding the logarithm term, we obtain
    %\vspace{-0.5em}
    \begin{align}
        \nonumber
        \bbE_{ q_n(\lambda)}[ \log (q_n(\lambda))] &= ({n}-1) \int \log \lambda \frac{{n}^{n}}{\lambda_0^{n} \Gamma({n})} \lambda^{{n}-1}e^{-{n}\frac{\lambda}{\lambda_0}} d\lambda - {n} + \log \left( \frac{{n}^{n}}{\lambda_0^{n} \Gamma({n})} \right)
        \\
        %\nonumber
        = - \log \lambda_0 &+ ({n}-1) \int \log \frac{\lambda}{\lambda_0} \frac{{n}^{n}}{\lambda_0^{n} \Gamma({n})} \lambda^{{n}-1}e^{-{n}\frac{\lambda}{\lambda_0}} d\lambda -{n} + \log \left( \frac{{n}^{n}}{ \Gamma({n})} \right).
        \label{eq:eqr2}
    \end{align}
    Now consider the second term in the equation above. Substitute $\lambda = \frac{t \lambda_0}{{n}}$ into the  integral, we have \begin{align}
        %\nonumber
        \int \log \frac{\lambda}{\lambda_0} \frac{{n}^{n}\lambda^{{n}-1}e^{-{n}\frac{\lambda}{\lambda_0}}}{\lambda_0^{n} \Gamma({n})}  d\lambda  &=   \int \log \frac{t}{{n}}  \frac{1}{ \Gamma({n})} t^{{n}-1}e^{-t} dt
        %\\
        &\leq \int \left( \frac{t}{{n}}-1  \right) \frac{t^{{n}-1}e^{-t} }{ \Gamma({n})} dt =0.
        \label{eq:eqr2a}
    \end{align} 
    Substituting the above result into~\eqref{eq:eqr2}, we get
    %\vspace{-1em}
    \begin{align}
        \nonumber
       \bbE_{ q_n(\lambda)}[ \log (q_n(\lambda))]
        \leq - \log \lambda_0 - {n} + \log \left( \frac{{n}^{n}}{ \Gamma(n)} \right)
        &\leq - \log \lambda_0 - {n}  + \log \left( \frac{{n}^{n}e^{{n}}}{ \sqrt{2\pi {n}} {n}^{{n}-1}} \right) 
        \\
        &= - \log \sqrt{2\pi } \lambda_0  + \frac{1}{2}\log {n}, 
        \label{eq:eqr3}
    \end{align}
    where the  second inequality uses the fact that $\sqrt{2\pi {n}} {n}^{{n}}e^{-{n}} \leq {n} \Gamma({n}) $. Recall $\pi(\lambda)= \frac{{\beta_q}^{\alpha_q}}{\Gamma({\alpha_q})} \lambda^{-{\alpha_q}-1}e^{-\frac{{\beta_q}}{\lambda}}$. Now consider the second term in~\eqref{eq:eqr1}. Using the definition of inverse-gamma prior and expanding the logarithm function, we have
    %\vspace{-0.5em}
    \begin{align}
        \nonumber
        -\bbE_{ q_n(\lambda)}&[ \log (\pi(\lambda))] =  - \log \left( \frac{{\beta_q}^{\alpha_q}}{\Gamma({\alpha_q})} \right) + ({\alpha_q}+1) \int \log {\lambda} \frac{{n}^{n}\lambda^{{n}-1}}{\lambda_0^{n} \Gamma({n})} e^{-{n}\frac{\lambda}{\lambda_0}} d\lambda+  \frac{{\beta_q}{n}}{({n}-1)\lambda_0}
        \\
        \nonumber
        =  - \log &\left( \frac{{\beta_q}^{\alpha_q}}{\Gamma({\alpha_q})} \right) + ({\alpha_q}+1) \left[\int \log \frac{\lambda}{\lambda_0} \frac{{n}^{n}\lambda^{{n}-1}}{\lambda_0^{n} \Gamma({n})} e^{-{n}\frac{\lambda}{\lambda_0}} d\lambda + \log \lambda_0 \right]
        +  \frac{{\beta_q}{n}}{({n}-1)\lambda_0} 
        \\
        &\quad \quad \leq - \log \left( \frac{{\beta_q}^{\alpha_q}}{\Gamma({\alpha_q})} \right)  + \frac{{n}\beta_q}{({n}-1)\lambda_0} + ({\alpha_q}+1)\log \lambda_0,
        \label{eq:eqr4}
    \end{align} 
    where the last inequality follows from the  observation  in~\eqref{eq:eqr2a}. Substituting~\eqref{eq:eqr4} and~\eqref{eq:eqr3} into~\eqref{eq:eqr1} and dividing either sides by $n$, we obtain $(\star)=\frac{1}{n}\scKL(q_n(\lambda)\| \pi(\lambda))$
    \begin{align}
        \nonumber
         (\star)  \leq  \frac{1}{n}&\left( - \log \sqrt{2\pi } \lambda_0  + \frac{1}{2}\log {n} - \log \left( \frac{{\beta_q}^{\alpha_q}}{\Gamma({\alpha_q})} \right)  + {\beta_q} \frac{{n}}{({n}-1)\lambda_0} + ({\alpha_q}+1)\log \lambda_0 \right)
        \\
        &= \frac{1}{2}\frac{\log {n}}{n} +  \frac{{\beta_q}}{(n-1)\lambda_0} + \frac{1}{n}\left( - \log \sqrt{2\pi }    - \log \left( \frac{{\beta_q}^{\alpha_q}}{\Gamma({\alpha_q})} \right)  + ({\alpha_q})\log \lambda_0 \right).
        \label{eq:eqr5}
    \end{align}  
    % Substituting $n=e^{n/\log n}$, we have
    % \begin{align}
    %     \frac{1}{n}\scKL(q_n(\lambda)\| \pi(\lambda)) &\leq   \frac{1}{2\log n} + {\beta_q} \frac{1}{(n(1-e^{-n}))\lambda_0} + \frac{1}{n}\left( - \log \sqrt{2\pi }    - \log \left( \frac{{\beta_q}^{\alpha_q}}{\Gamma({\alpha_q})} \right)  + ({\alpha_q})\log \lambda_0 \right).
    % \end{align}  
    Now, consider the second term in~\eqref{eq:os2}. Since the observations are independent and identically distributed, we obtain 
    \(
        \frac{1}{n} \bbE_{q(\lambda)} \left[ \scKL\left(  dP^{n}_{\lambda_0}\| p(\nX|\lambda) \right) \right]   =  \bbE_{q_n(\lambda)} \left[ \scKL\left(dP_{\lambda_0} \| p(\xi|\lambda) \right) \right].  
    \)
    Now using the expression for \scKL~ divergence between the two exponential distributions, we have
    \begin{align*}
    %\nonumber
        \frac{1}{n} \bbE_{q_n(\lambda)} \left[ \scKL\left(  dP^{n}_{\lambda_0}\| p(\nX|\lambda) \right) \right]  &= \int \left(\log \frac{\lambda_0}{\lambda}  + \frac{\lambda}{\lambda_0} -1\right) \frac{{n}^{n}\lambda^{{n}-1} }{\lambda_0^{n} \Gamma({n})} e^{-{n}\frac{\lambda}{\lambda_0}} d\lambda 
        %\\ %\nonumber 
        %&\leq  \frac{{n}}{{n}-1} + 1 - 2
        \leq \frac{1}{{n}-1},
        %\label{eq:eqr6}
    \end{align*}
    where second inequality uses the fact that $\log x \leq x-1$.
    The inequality above combined together with~\eqref{eq:eqr5} for $n \geq 2$ implies that
    \begin{align}
        \nonumber
        \frac{1}{n} &\left[ \scKL\left(q_n(\lambda)\|\pi(\lambda) \right) + \bbE_{q_n(\lambda)} \left[ \scKL\left(dP^{n}_{\lambda_0})\| p(\nX|\lambda) \right) \right]  \right] 
        \\
        &\leq \frac{1}{2} \frac{\log n}{n} +  \frac{1}{n}\left( 2+  \frac{2{\beta_q}}{\lambda_0}  - \log \sqrt{2\pi }    - \log \left( \frac{{\beta_q}^{\alpha_q}}{\Gamma({\alpha_q})} \right)  + {\alpha_q}\log \lambda_0 \right) \leq C_9 \frac{\log n }{n}. 
    \end{align}
    where $C'_9:= \frac{1}{2} + \max \left(0,  {2}+  \frac{2{\beta_q}}{\lambda_0}  - \log \sqrt{2\pi }    - \log \left( \frac{{\beta_q}^{\alpha_q}}{\Gamma({\alpha_q})} \right)  + {\alpha_q}\log \lambda_0 \right)$. Now using similar arguments as used for~\eqref{eq:os2}, we can bound~\eqref{eq:os3} as 
    \begin{align}
        \nonumber
        \frac{1}{n} &\left[ \scKL\left(q(\mu)\|\pi(\mu) \right) + \bbE_{q(\mu)} \left[ \scKL\left(dP^{n}_{\mu_0})\| p(\nX|\mu) \right) \right]  \right] 
        \\
        &\leq \frac{1}{2} \frac{\log n}{n} +  \frac{1}{n}\left( 2+  \frac{2{\beta_s}}{\mu_0}  - \log \sqrt{2\pi }    - \log \left( \frac{{\beta_s}^{\alpha_s}}{\Gamma({\alpha_s})} \right)  + {\alpha_s}\log \mu_0 \right) \leq C_9 \frac{\log n }{n}. 
    \end{align}
    where $C''_9:= \frac{1}{2} + \max \left(0,  {2}+  \frac{2{\beta_s}}{\mu_0}  - \log \sqrt{2\pi }    - \log \left( \frac{{\beta_s}^{\alpha_s}}{\Gamma({\alpha_s})} \right)  + {\alpha_s}\log \mu_0 \right)$. 
    Combining the above two results the proposition follows with $\e_n'=\frac{\log n }{n}$, and $C_9= C'_9+C''_9$.
\end{proof}

\bibliographystyle{unsrt}
\bibliography{refs}

\end{document}